\documentclass[a4paper,11pt]{article}
\usepackage{stmaryrd}
\usepackage{amsfonts}
\usepackage{bbm}
\usepackage{amscd}
\usepackage{mathrsfs}
\usepackage{latexsym,amssymb,amsmath,amscd,amscd,amsthm,amsxtra,xypic}
\usepackage[dvips]{graphicx}
\usepackage[utf8]{inputenc}
\usepackage[T1]{fontenc}
\usepackage{enumerate}
\usepackage{dsfont}
\usepackage{amssymb}
\usepackage[all]{xy}
\usepackage{ifpdf}
\ifpdf
\usepackage[colorlinks=true,linkcolor=blue,final,backref=page,hyperindex,citecolor=red]{hyperref}
\else
\usepackage[colorlinks,final,backref=page,hyperindex,hypertex]{hyperref}
\fi

\usepackage{tikz-cd}
\usepackage{nicefrac,mathtools,enumitem}
\usepackage{microtype}
\usepackage{amsfonts}
\usepackage{amssymb}
\usepackage{mathrsfs}
\usepackage{amsmath}
\usepackage{amsthm}
\usepackage{enumerate}
\usepackage{indentfirst}
\usepackage{amsfonts}
\usepackage{amssymb}
\usepackage{mathrsfs}
\usepackage{amsmath}
\usepackage{amsthm}
\usepackage{enumerate}
\usepackage{cite}
\usepackage{mathrsfs}
\usepackage{geometry}
\allowdisplaybreaks
\def\Rg{\mathbb{C}[\partial]}
\def\Ep{\mathcal{E}}
\def\Vs{\mathfrak{a}}
\def\Ws{\mathfrak{b}}
\def\Mod{\mathcal{M}}
\def\rep{\varrho}
\def\str{\Omega}
\def\smu{\widehat{\nu}_1}
\def\smv{\widehat{\nu}_2}
\def\shf{\widehat{\psi}}
\def\Cch{\mathscr{C}}

\def\la{\lambda}
\def\p{\partial}
\def\C{\mathbb{C}}
\def\half{\tfrac{1}{2}}
\def\perm{\mathbb{S}}
\def\Nat{\mathbb{N}}
\newcommand{\ad}{\mathrm{ad}}

\newcommand{\dM}{\mathbf{d}}
\newcommand{\emptycomment}[1]{}

\numberwithin{equation}{section}

\newtheorem{theorem}{Theorem}[section]
\newtheorem{definition}[theorem]{Definition}
\newtheorem{corollary}[theorem]{Corollary}
\newtheorem{lemma}[theorem]{Lemma}
\newtheorem{proposition}[theorem]{Proposition}
\newtheorem{example}[theorem]{Example}
\newtheorem{remark}[theorem]{Remark}

\allowdisplaybreaks

\begin{document}
%==========================================================================

\title{\sf Deformation maps on quasi-twilled Lie conformal algebras}
\date{}
\author{ \bf 
  Taoufik Chtioui$^{1}$\footnote{E-mail: chtioui.taoufik@yahoo.fr  },
  \ \ Sami Mabrouk $^{2}$\footnote{E-mail: mabrouksami00@yahoo.fr, sami.mabrouk@fsgf.u-gafsa.tn  },\   Abdenacer Makhlouf$^{3}$
    \footnote {  E-mail:  abdenacer.makhlouf@uha.fr \text{(Corresponding author)}}
\\ \\
$^{1}${\small Mathematics and Applications Laboratory, Faculty of Sciences, Gabes University, Tunisia } \\
$^{2}${\small  University of Gafsa, Faculty of Sciences Gafsa, 2112 Gafsa, Tunisia }\\
$^{3}${\small Université de Haute Alsace IRIMAS- Département de Mathématiques, Mulhouse, 
France} 
}
\maketitle

\begin{abstract} The purpose of  this paper is to 
develop a unified approach  for various operators on 
Lie conformal algebras. Given a quasi-twilled Lie conformal algebra $(\Ep,\Vs,\Ws)$, we
introduce two dual families of operators: \emph{right deformation maps}
$D:\Vs\to\Ws$ and \emph{left deformation maps} $B:\Ws\to\Vs$. Each
family simultaneously subsumes several classical structures:
 modified $r$-matrices,  crossed homomorphisms,
 derivations, and  Lie conformal algebra homomorphisms in the
right case,  relative Rota-Baxter operators,  twisted
Rota-Baxter operators,  Reynolds operators, and deformation
maps of matched pairs in the left case. Using Voronov's derived bracket method, we construct the controlling homotopy algebras: a curved
$L_\infty$-algebra governing right deformation maps and an
$L_\infty$-algebra governing left deformation maps, with Maurer-Cartan
elements precisely characterizing each type. We further develop the
associated deformation theories via twisted $L_\infty$-algebras and
define cohomology complexes for both types of deformation maps, recovering  the cohomologies of some  classical and conformal operators already developed in the literature and defining  a new ones for modified $r$-matrices, Lie conformal algebra homomorphisms, derivations and deformation maps on matched pairs.
\end{abstract}
\textbf{Keywords and phrases:} Lie conformal algebra, twilled Lie conformal algebra, representation,  matched pair, left deformation map,  right deformation map, Rota-Baxter operator, cohomology, $L_\infty$-algebra.\\\\
{\textbf{MSC(2020):}} 16D70, 17A30, 	17B38 , 	17B56, 17B10.
\tableofcontents

%==========================================================================
\section{Introduction}\label{sec:intro}
%==========================================================================

The theory of Lie conformal algebras, introduced by Kac~\cite{KAC} and
further developed by Bakalov-D'Andrea-Kac~\cite{BDK} and De~Sole-Kac~\cite{DK,DK3},
provides the algebraic language underlying the operator product expansion in
two-dimensional conformal field theory. These algebras encode the singular
part of the OPE of chiral fields, and their cohomological and
representation-theoretic properties have been the subject of active
investigation; see~\cite{BKV,DK3,DK13} for foundational results.

In parallel, the study of operator-theoretic structures on Lie
algebras:  Rota-Baxter operators, averaging operators, Reynolds
operators, modified $r$-matrices, derivations and their relatives has undergone
a renaissance, driven in large part by their connections to integrable
systems, the Yang-Baxter equation, dendriform algebras, and post-Lie
algebras. A central theme in this modern treatment, pioneered by
Tang-Bai-Guo-Sheng \cite{TBGS,TBGS2} and developed further
in \cite{Das0,Das1,JSZ,LST,TFS}, is that each class of operators should
be governed by a \emph{controlling homotopy algebra} (a (curved)
$L_\infty$-algebra or differential graded Lie algebra)  whose
Maurer-Cartan elements are precisely the operators in question, and
whose cohomology captures the infinitesimal deformation theory of those
operators. Extending this philosophy to the conformal setting is the
principal aim of this paper.

The notion of a \emph{twilled} (or bicrossed product) Lie algebra, in
which a Lie algebra splits as $\Vs\oplus\Ws$ with both summands being
subalgebras, and its generalization to \emph{quasi-twilled} Lie algebras
(where only $\Ws$ is required to be a subalgebra), provide a natural
unifying framework: by varying the quasi-twilled structure, one obtains
the semi-direct product, the $\phi$-deformed semi-direct product, and
many other standard constructions. In the conformal setting, the
structure $2$-cochain $\str$ of a quasi-twilled Lie conformal  algebra
admits a bihomogeneous decomposition
$\str = \shf + \smu + \smv$
with respect to the Nijenhuis-Richardson bigrading, and the
Maurer-Cartan equation $[\str,\str]_{NR}=0$ decomposes into a
hierarchical system of compatibility conditions. The
Nijenhuis-Richardson bracket, which underlies this framework, was
studied in the conformal context in~\cite{DK13}.

The concept of a {deformation map} for twilled Lie algebras was introduced by
Agore and Militaru~\cite{AM14a,AM14b} in their study of the
{classifying complements problem}, whose aim is to classify, up to
isomorphism, all complements of a given Lie subalgebra.
One of the remarkable features of deformation maps is that they provide a
unified framework encompassing several important classes of operators on Lie
algebras, including Lie algebra homomorphisms, derivations, crossed
homomorphisms~ \cite{BBSS}, and relative Rota-Baxter operators (or
$\mathcal{O}$-operators) of weights $0$ and $1$~\cite{Ku,Uch08}, see also \cite{HB,YL,ZBC}.
The theory of Rota-Baxter operators, originating from the work of
Baxter~\cite{Bax60} and subsequently developed by Rota~\cite{Rot69}, has
received renewed attention due to its deep connections with
Connes-Kreimer renormalization~\cite{CK98,CK00}, the classical Yang-Baxter
equation, and dendriform algebras~\cite{Guo12}. Nevertheless, several operators
closely related to Rota-Baxter operators, such as twisted Rota-Baxter
operators~\cite{Das21}, Reynolds operators~\cite{Rey1895}, and modified
Rota--Baxter operators~\cite{Uch08}, cannot be realized as deformation maps in
the setting of twilled Lie algebras.
To overcome this limitation, Jiang, Sheng, and Tang~\cite{JST23} introduced the
notion of a  {quasi-twilled Lie algebra} together with two types of
deformation maps, namely  {type-I} and  {type-II deformation maps}.
They also developed a corresponding cohomology theory that unifies the
cohomological frameworks associated with all the operators mentioned above.
Subsequently, deformation maps for quasi-twilled associative algebras were
investigated in~\cite{Makhlouf}. The notions of deformation maps in
proto-twilled Leibniz algebras and proto-twilled Poisson algebras were studied
in~\cite{das one,das_two}. More recently, deformation maps of quasi-twilled
$3$-Lie algebras have been explored in~\cite{def_map_3_Lie}.

\medskip
%\noindent\textbf{Main contributions.}
The aim of this  paper is to introduce and  study two families of
operators on quasi-twilled Lie conformal  algebras.
A \emph{right deformation map} on quasi-twilled Lie conformal  algebra  is a $\Rg$-module homomorphism
$D:\Vs\to\Ws$ whose graph $\mathrm{Gr}(D)\subset\Ep$ is a Lie conformal 
subalgebra, or equivalently, whose defining
equation~\eqref{eq:DmapI} holds. Special cases include conformal
modified $r$-matrices (Example~\ref{ex:modified-r}), conformal crossed
homomorphisms (Example~\ref{ex:crossed-hom}), conformal derivations
(Example~\ref{ex:conf-derivation}), and Lie conformal  algebra
homomorphisms (Example~\ref{ex:conf-hom}). A \emph{left deformation
map} is a $\Rg$-module homomorphism $B:\Ws\to\Vs$
satisfying~\eqref{eq:DmapII}, and its special cases include conformal
relative Rota-Baxter operators (Example~\ref{ex:conf-RRB}), conformal
twisted Rota-Baxter operators (Example~\ref{ex:conf-TRB}), conformal
Reynolds operators (Example~\ref{ex:conf-Reynolds}), and deformation
maps of matched pairs (Example~\ref{ex:conf-matched-pair}).
For each family, we establish the following results.
\begin{enumerate}
  \item \textbf{Induced structures and Cohomology.} 
    Each deformation map $D$ (resp.\ $B$) induces a  Lie conformal
    algebra structure on $\Vs$ (resp.\ $\Ws$) and a module structure
    on the opposite summand, yielding a Chevalley-Eilenberg-type
    cochain complex whose cohomology is called the
    \emph{cohomology of the deformation map}
    (Definitions~\ref{def:cohom-I} and~\ref{def:cohom-II}).
    We show that the twisted differential $l_1^D$ (resp.\ $l_1^B$)
    coincides, up to sign, with this Chevalley-Eilenberg coboundary. 
    %(Propositions~\ref{prop:l1D-CE-I} and~\ref{prop:l1B-CE-II}),
    Moreover, we show that the second cohomology group classifies infinitesimal
    deformations.  
  \item \textbf{Controlling homotopy algebras.}
    Using Voronov's curved $V$-data construction~\cite{Vo}, we show
    that right deformation maps are Maurer-Cartan elements of a curved
    $L_\infty$-algebra on
    $\bigoplus_{n\geq0}\Cch^{n+1}(\Vs,\Ws)$
    (Theorem~\ref{thm:control-I}), and left deformation maps are
    Maurer-Cartan elements of an (uncurved) $L_\infty$-algebra on
    $\bigoplus_{n\geq0}\Cch^{n+1}(\Ws,\Vs)$
    (Theorem~\ref{thm:control-II}). In each case, the structure maps
    are given explicitly by iterated Nijenhuis-Richardson brackets
    with the components of $\str$.
  \item \textbf{Deformation theory.}
    Given a deformation map $D$ (resp.\ $B$), the twisted
    $L_\infty$-algebra governs perturbations $D+D'$ (resp.\ $B+B'$),
    and the classical Maurer-Cartan formalism yields the precise
    obstruction-theoretic description of such deformations
    (Theorems~\ref{thm:deform-DmapI} and~\ref{thm:deform-DmapII}).
\end{enumerate}

As corollaries of the general theory, we recover and extend the
controlling algebras and deformation cohomologies for conformal
$\mathcal{O}$-operators~\cite{YL},
conformal modified $r$-matrices~\cite{JS2}, conformal
derivations~\cite{TFS}, and deformation maps of matched pairs, several  
of which appear to be new even in the conformal setting.

\medskip
%\noindent\textbf{Organization.}
The paper is organized as follows. In Section~\ref{sec:prelim}, we provide a review of  Lie conformal algebras, their representations 
and cohomology, the Nijenhuis-Richardson bracket and its bigrading, and
the theory of quasi-twilled Lie conformal  algebras, culminating in the
$L_\infty$-algebra of Theorem~\ref{thm:Linf-quasi}.
In Section~\ref{sec:typeI}, we  introduce right deformation maps, construct
their controlling curved $L_\infty$-algebra, develop the deformation
theory, and define their cohomology with illustrative examples.
The last Section deals with a parallel procedure for left
deformation maps.

\medskip
\noindent\textbf{Conventions.}
Throughout this paper, all vector spaces, linear maps, and tensor products are over complex numbers field
$\C$, and all $\Rg$-modules are assumed finitely generated.

%==========================================================================
\section{Preliminaries}\label{sec:prelim}
%==========================================================================

The goal of this section is twofold. We first summarize the basics on  Lie conformal algebras, their modules, and their cohomology
in a self-contained way (see \cite{D'AndreaKac, KAC, KAC2, DK113}). Then, we give the algebraic background about bigraded cochain complexes, the Nijenhuis-Richardson bracket, and
(quasi-)twilled structures, that is needed in the sequel (For more details see \cite{Wu, YL, DK3}).

%--------------------------------------------------------------------------
\subsection{Lie conformal algebras}\label{subsec:conf-alg}
%--------------------------------------------------------------------------

\begin{definition}\label{def:conformal-algebra}
A \textit{conformal algebra} is a $\Rg$-module $\Ep$ endowed with a
$\C$-linear map
\[
  \mu : \Ep \otimes \Ep \;\longrightarrow\; \Ep[\lambda],
  \qquad a \otimes b \;\mapsto\; a_\lambda b,
\]
satisfying \emph{conformal sesquilinearity}:
\begin{equation}\label{eq:sesqui}
  (\partial a)_\lambda b = -\lambda\,a_\lambda b,
  \qquad
  a_\lambda(\partial b) = (\partial + \lambda)\,a_\lambda b,
  \qquad \forall\; a, b \in \Ep.
\end{equation}
\end{definition}

From \eqref{eq:sesqui} one derives, by variable substitution, the
secondary identities ($a,b,c\in\Ep$):
\begin{align}
  (a_{-\la-\p}b)_{\la+\mu}c
  &= (a_\mu b)_{\la+\mu}c, &
  a_\mu(b_{-\la-\p}c)
  &= a_\mu(b_{-\la-\mu-\p}c), \label{eq:sesqui-extra1} \\
  (a_{-\mu-\p}b)_{-\la-\p}c
  &= (a_{-\la-\mu-\p}b)_{-\la-\p}c, &
  a_{-\la-\mu-\p}(b_{-\la-\p}c)
  &= a_{-\la-\mu-\p}(b_\mu c). \label{eq:sesqui-extra2}
\end{align}
These identities will be used repeatedly in later computations.

%--------------------------------------------------------------------------
%\subsection{Lie conformal  algebras}\label{subsec:Lie-conf}
%--------------------------------------------------------------------------

\begin{definition}\label{def:Lie-conf}
A \textit{Lie conformal  algebra} is a conformal algebra  $\Ep$  with product 
$\Ep \otimes \Ep \to \Ep[\lambda]$, $a\otimes b \mapsto \{a_\lambda b\}$,
called the \emph{$\lambda$-bracket}, subject to the following identities  for
all $a,b,c\in\Ep$:
\begin{align}
    % \label{Sesquilinearity}
   %\{\partial a_\lambda b\} = -\lambda\{a_\lambda b\}&\text{ and %}
    %\{a_\lambda \partial b\} = (\partial+\lambda)\{a_\lambda b\}, \text{  (Sesquilinearity) }\\
  \label{Skew-symmetry}
    \{a_\lambda b\} &= -\{b_{-\lambda-\partial}a\},\quad\quad\quad\quad\quad\;\; \;\;\text{ (Skew-symmetry)}\\
  \label{Jacobi identity}
    \{a_\lambda\{b_\mu c\}\}
    & = \{\{a_\lambda b\}_{\lambda+\mu}c\} + \{b_\mu\{a_\lambda c\}\}, \text{ (Jacobi identity)} .
\end{align}
\end{definition}

%--------------------------------------------------------------------------

Let $\Mod$ and $\Mod'$ be $\Rg$-modules. A \textit{conformal linear
map} from $\Mod$ to $\Mod'$ is a $\C$-linear map
$f_\lambda : \Mod \to \Mod'[\lambda]$ satisfying
\[
  f_\lambda(\partial u) = (\partial + \lambda)f_\lambda u,
  \qquad \forall\; u \in \Mod.
\]
The $\C$-vector space of all such maps is denoted $\mathrm{Chom}(\Mod,\Mod')$
and is itself a $\Rg$-module via $(\partial f)_\lambda := -\lambda f_\lambda$.
The \textit{conformal dual} of $\Mod$ is the $\Rg$-module
\[
  \Mod^{*c} := \mathrm{Chom}(\Mod,\C),
\]
where $\C$ is considered as the trivial $\Rg$-module ($\partial\cdot 1 = 0$).
Concretely,
$\Mod^{*c} = \{ \xi: \Mod\to\C[\lambda] \mid \xi_\lambda(\partial u)
= \lambda\,\xi_\lambda u, \text{ for all } u\in\Mod\}.$

For a finitely generated $\Rg$-module $\Mod$, the space
$\mathrm{Cend}(\Mod) := \mathrm{Chom}(\Mod,\Mod)$
is an associative conformal algebra under the composition product
$(f_\lambda g)_\mu v := f_\lambda(g_{\mu-\lambda}v)$.
The associated Lie conformal  algebra, denoted $\mathrm{gc}(\Mod)$
and called the \textit{general Lie conformal  algebra} on $\Mod$, has
brackets
\[
  [f_\lambda g]_\mu v
  := f_\lambda(g_{\mu-\lambda}v) - g_{\mu-\lambda}(f_\lambda v),
  \qquad f,g \in \mathrm{Cend}(\Mod),\; v\in\Mod.
\]

%--------------------------------------------------------------------------
%\subsection{Modules over Lie conformal  algebras and cohomology}\label{subsec:modules}
%--------------------------------------------------------------------------

\begin{definition}\label{def:module}
A $\Rg$-module $\Mod$ is a \textit{module} over a Lie conformal  algebra
$(\Ep,\{\cdot_\lambda\cdot\})$ if there exists a $\C$-linear map
$\rep:\Ep\to\mathrm{Cend}(\Mod)$ such that
\begin{equation}\label{eq:module}
  \rep(a)_\lambda\rep(b)_\mu - \rep(b)_\mu\rep(a)_\lambda
  = \rep(\{a_\lambda b\})_{\lambda+\mu},
  \qquad
  \rep(\partial a)_\lambda = -\lambda\rep(a)_\lambda,
  \quad \forall\; a,b\in\Ep.
\end{equation}
Equivalently, $\rep$ is a homomorphism of Lie conformal  algebras
$\Ep\to\mathrm{gc}(\Mod)$. We write $(\Mod;\rep)$ for this module.
\end{definition}

One verifies from Definition~\ref{def:module} the supplementary
identities
\begin{align}
  \rep(\{a_\lambda b\})_{-\p-\mu}
  &= \rep(a)_\lambda\rep(b)_{-\p-\mu}
     - \rep(b)_{-\p-\la-\mu}\rep(a)_\lambda, \label{eq:mod-extra1}\\
  \rep(\{a_\mu b\})_{-\p-\la}
  &= \rep(a)_\mu\rep(b)_{-\p-\la}
     - \rep(b)_{-\p-\la-\mu}\rep(a)_{-\p-\la}, \label{eq:mod-extra2}
\end{align}
which appear in the explicit computation of coboundary operators.

The \textit{dual module} structure on $\Mod^{*c}$ is given by
$\rep^*:\Ep\to\mathrm{gc}(\Mod^{*c})$,
\[
  (\rep^*(a)_\lambda \xi)_\mu v
  := -\xi_{\mu-\lambda}(\rep(a)_\lambda v),
  \qquad a\in\Ep,\; \xi\in\Mod^{*c},\; v\in\Mod.
\]
Setting $\ad(a)_\lambda b := \{a_\lambda b\}$ for $a,b\in\Ep$ yields the
\textit{adjoint module} $(\Ep;\ad)$, and hence the
\textit{coadjoint module} $(\Ep^{*c};\ad^*)$.

\begin{proposition}[\cite{DK113}]
\label{prop:semidirect}
Let $\Ep$ be a Lie conformal  algebra and $(\Mod;\rep)$ be a module.  The
$\Rg$-module $\Ep\oplus\Mod$ carries a Lie conformal  algebra structure
\begin{equation}\label{eq:semidirect}
  [(a,m)_\lambda(b,n)]
  := \bigl(\{a_\lambda b\},\;
            \rep(a)_\lambda n - \rep(b)_{-\p-\lambda}m\bigr),
  \quad a,b\in\Ep,\; m,n\in\Mod,
\end{equation}
called the \textit{semi-direct product} and denoted $\Ep\ltimes_\rep\Mod$.
\end{proposition}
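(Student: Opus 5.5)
We verify directly that the bracket \eqref{eq:semidirect} is a well-defined $\lambda$-bracket on the $\Rg$-module $\Ep\oplus\Mod$ (with $\partial(a,m)=(\partial a,\partial m)$). We then check the three axioms: sesquilinearity \eqref{eq:sesqui}, skew-symmetry \eqref{Skew-symmetry}, and the Jacobi identity \eqref{Jacobi identity}. Because the bracket is bilinear and splits into an $\Ep$-component and an $\Mod$-component, each axiom reduces to checks on pure elements. For Jacobi, it suffices to treat triples with at most one entry in $\Mod$: any term with two entries from $\Mod$ vanishes, since $\Mod$ is an abelian ideal. The $\Ep$-component of every axiom is the corresponding axiom for $\{\cdot_\lambda\cdot\}$, so only the $\Mod$-components need work.

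\textbf{Well-definedness and sesquilinearity.} The term $\rep(b)_{-\p-\lambda}m$ means: expand $\rep(b)_\mu m=\sum_j \mu^j m_j$ and set $\mu=-\p-\lambda$, with $\p$ acting on the left of the coefficients. This gives an element of $\Mod[\lambda]$.

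For the first variable, $\rep(\partial a)_\lambda n=-\lambda\rep(a)_\lambda n$ holds by \eqref{eq:module}. For the second term we need $\rep(b)_{-\p-\lambda}(\partial m)=-\lambda\,\rep(b)_{-\p-\lambda}m$. This follows from $\rep(b)_\mu(\partial m)=(\partial+\mu)\rep(b)_\mu m$: the substitution $\mu=-\p-\lambda$ turns $\partial+\mu$ into $-\lambda$. Sesquilinearity in the second variable is analogous. It uses $\rep(\partial b)_{\mu}=-\mu\rep(b)_\mu$, which under the substitution becomes $(\partial+\lambda)$, together with $\rep(a)_\lambda(\partial n)=(\partial+\lambda)\rep(a)_\lambda n$.

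\textbf{Skew-symmetry.} We compute $-[(b,n)_{-\lambda-\p}(a,m)]$. Its $\Mod$-part is
\[
-\rep(b)_{-\lambda-\p}m+\rep(a)_{-\p-(-\lambda-\p)}n .
\]
The only care needed is the substitution $\lambda\mapsto-\lambda-\p$ inside $-\p-\lambda$. Here the inner $\p$ acts on the whole output, which turns $-\p-\lambda$ into $\lambda$, so this expression equals $\rep(a)_\lambda n-\rep(b)_{-\p-\lambda}m$.

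\textbf{Jacobi identity (main obstacle).} By linearity we check the $\Mod$-component for the triples $(a,b,m)$ placed in each position, e.g.\ $x=(a,0)$, $y=(b,0)$, $z=(0,m)$, and the permuted placements. With $m$ in the third slot, the identity reads $\rep(a)_\lambda\rep(b)_\mu m=\rep(\{a_\lambda b\})_{\lambda+\mu}m+\rep(b)_\mu\rep(a)_\lambda m$, which is exactly \eqref{eq:module}. With $m$ in the second or first slot, the identity becomes, after unwinding, the supplementary identities \eqref{eq:mod-extra1} and \eqref{eq:mod-extra2}. These are \eqref{eq:module} with a variable replaced by $-\p-\mu$ or $-\p-\lambda$, justified via the substitution rules \eqref{eq:sesqui-extra1}--\eqref{eq:sesqui-extra2}.

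The delicate point is bookkeeping of the $\p$'s in nested substitutions. For example, in $[(0,m)_\lambda[(a,0)_\mu(b,0)]]=-\rep(\{a_\mu b\})_{-\p-\lambda}m$, we must track which $\p$ acts on which output when comparing with $[[x_\lambda y]_{\lambda+\mu}z]$. I would handle this by expanding everything as polynomials in formal variables and applying the substitution only at the end. Alternatively, one can avoid the two extra cases by using skew-symmetry, already established, to move $m$ to the last slot: for skew-symmetric $\lambda$-brackets, Jacobi for one ordering with $\Mod$ entry last implies the others.
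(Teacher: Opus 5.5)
Your proof is correct. The paper gives no argument for this statement; it only attributes it to \cite{DK113}, so there is nothing to match step by step. The paper's own framework does suggest a different route that is worth comparing with yours.

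The bracket \eqref{eq:semidirect} is exactly the lift $\widehat\omega+\widehat\rep$ of \eqref{eq:nu-hat}. By Lemma~\ref{lem:FN-gLa} it is a Lie conformal bracket iff $[\widehat\omega+\widehat\rep,\widehat\omega+\widehat\rep]_{NR}=0$, and Proposition~\ref{prop:FN-module} says this holds iff $(\Mod;\rep)$ is a module. In that route your sesquilinearity and skew-symmetry steps are still needed, since they are what put $\widehat\omega+\widehat\rep$ in $\Cch^2(\Ep\oplus\Mod,\Ep\oplus\Mod)$. Only your Jacobi computation is replaced, and the price is that all the $\p$-bookkeeping is hidden inside Lemma~\ref{lem:FN-gLa} and Proposition~\ref{prop:FN-module}, which the paper states without proof. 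Your direct check is self-contained and shows which module identity each placement of the $\Mod$-entry encodes:
\eqref{eq:module} for $(a,b,m)$, \eqref{eq:mod-extra1} for $(a,m,b)$, and \eqref{eq:mod-extra2} for $(m,a,b)$. Since each case is an equivalence, your check also gives the converse.

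To pin down the step you flag as delicate: conformal linearity $\rep(a)_\la(\p x)=(\p+\la)\rep(a)_\la x$ gives
$\rep(a)_\la\bigl(\rep(b)_{-\p-\mu}m\bigr)=\bigl[\rep(a)_\la\rep(b)_\nu m\bigr]_{\nu\mapsto-\p-\la-\mu}$,
with $\p$ acting on the final output. So \eqref{eq:mod-extra1} is \eqref{eq:module} with its second variable replaced by $-\p-\la-\mu$, not by $-\p-\mu$.

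For \eqref{eq:mod-extra2}, use the analogous rules
$\rep(a)_\mu\bigl(\rep(b)_{-\p-\la}m\bigr)=\bigl[\rep(a)_\mu\rep(b)_\nu m\bigr]_{\nu\mapsto-\p-\la-\mu}$
and
$\rep(b)_{-\p-\la-\mu}\bigl(\rep(a)_{-\p-\la}m\bigr)=\bigl[\rep(b)_\nu\rep(a)_\mu m\bigr]_{\nu\mapsto-\p-\la-\mu}$.
Then the same substitution in \eqref{eq:module}, written in the variables $(\mu,\nu)$, gives \eqref{eq:mod-extra2}.

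Citing \eqref{eq:sesqui-extra1}--\eqref{eq:sesqui-extra2} for these rules is legitimate provided you apply them to the conformal algebra $\Ep\oplus\Mod$, i.e. after your sesquilinearity step.

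Your skew-symmetry shortcut is also valid. Given sesquilinearity and skew-symmetry, the Jacobiator $J_{\la,\mu}(x,y,z)$ changes sign under $x\leftrightarrow y$ (with $\la\leftrightarrow\mu$). It also changes sign under $y\leftrightarrow z$ (with $\mu\mapsto-\la-\mu-\p$). Hence the single placement $(a,b,m)$ suffices.
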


More generally, given a $2$-cocycle $\phi\in\Cch^2(\Ep,\Mod)$, the formula
\begin{equation}\label{eq:twisted-semidirect}
  [(a,m)_\lambda(b,n)]^\phi
  := \bigl(\{a_\lambda b\},\;
            \rep(a)_\lambda n - \rep(b)_{-\p-\lambda}m + \phi_\lambda(a,b)\bigr)
\end{equation}
defines a Lie conformal  algebra structure on $\Ep\oplus\Mod$, called the
\textit{$\phi$-deformed semi-direct product} and denoted
$\Ep\ltimes_\phi\Mod$.

Let us recall the cohomology complex for a Lie conformal algebra with coefficients in a given 
module introduced in \cite{DK113}. Fix a Lie conformal  algebra $(\Ep,\{\cdot_\lambda\cdot\})$ and a module
$(\Mod;\rep)$.  Set $\Cch^0(\Ep,\Mod) := \Mod/\partial\Mod$.  For
$k\geq 1$, let $\Cch^k(\Ep,\Mod)$ be the space of $\C$-linear maps
\[
  \varphi : \Ep^{\otimes k} \longrightarrow
  \C[\lambda_1,\ldots,\lambda_{k-1}]\otimes\Mod
\]
satisfying the following two conditions, where
$\lambda_k^\ddag := -\sum_{j=1}^{k-1}\lambda_j - \partial^\Mod$:

\smallskip
\noindent\textit{(C1) Sesquilinearity.}
For $1\leq i\leq k-1$,
$$\varphi_{\boldsymbol\lambda}(\ldots,\partial a_i,\ldots)
= -\lambda_i\,\varphi_{\boldsymbol\lambda}(\ldots,a_i,\ldots),
\text{ and }
\varphi_{\boldsymbol\lambda}(\ldots,\partial a_k)
= -\lambda_k^\ddag\,\varphi_{\boldsymbol\lambda}(\ldots,a_k).$$

\smallskip
\noindent\textit{(C2) Graded antisymmetry.}
For every permutation $\sigma$ of $\{1,\ldots,k\}$,
\begin{equation}\label{eq:antisym}
  \varphi_{\boldsymbol\lambda}(a_1,\ldots,a_k)
  = (-1)^\sigma\,
    \varphi_{\lambda_{\sigma(1)},\ldots,\lambda_{\sigma(k-1)}}
    (a_{\sigma(1)},\ldots,a_{\sigma(k)})
    \big|_{\lambda_k\mapsto\lambda_k^\ddag}.
\end{equation}

\medskip
The \textit{coboundary operator}
$\dM:\Cch^k(\Ep,\Mod)\to\Cch^{k+1}(\Ep,\Mod)$
is defined for $\varphi\in\Cch^k(\Ep,\Mod)$, $k\geq 1$, by
\begin{equation}\label{eq:coboundary}
\begin{split}
  (\dM\varphi)_{\la_1,\ldots,\la_k}
  &(a_1,\ldots,a_{k+1}) \\
  &= \sum_{i=1}^{k}(-1)^{i+1}
     \rep(a_i)_{\la_i}
     \varphi_{\la_1,\ldots,\widehat{\la_i},\ldots \la_k}(a_1,\ldots,\widehat{a_i},\ldots,a_{k+1})
     %\big|_{\la_{k+1}\mapsto\la_{k+1}^\ddag} 
     \\
  &\quad
    + \sum_{i=1}^{k}(-1)^i
    \varphi_{\la_1,\ldots,\widehat{\la_i},\ldots \la_k}
      (a_1,\ldots,\widehat{a_i},\ldots,a_k,
       \{a_{i\,\la_i} a_{k+1}\})
      %\big|_{\la_{k+1}\mapsto\la_{k+1}^\ddag} 
      \\
  &\quad
    + \!\!\sum_{1\leq i<j\leq k+1}\!\!(-1)^{k+i+j+1}
      \varphi_{\la_1,\ldots,\widehat{\la_i},\ldots ,\widehat{\la_j},\ldots, \la_k,\la_{k+1}^\ddag}
      (a_1,\ldots,a_k,\{a_{i\,\la_i}a_j\})
      %\big|_{\la_{k+1}\mapsto\la_{k+1}^\ddag} 
      \\
  &\quad
    + (-1)^k\rep(a_{k+1})_{\la_{k+1}^\ddag}
      \varphi_{\la_1,\ldots,\la_{k-1}}(a_1,\ldots,a_k),
\end{split}
\end{equation}
where hats denote the omission of the corresponding index or argument. For $\bar m \in \Cch^0(\Ep,\Mod)$ we have $(\dM \bar m)(a)=\varrho(a)_{\partial^{-\mathcal M}}m$.

\begin{theorem}[{\cite{DK113}}]\label{thm:d-squared}
  For every $\varphi\in\Cch^k(\Ep,\Mod)$, one has $\dM\varphi\in\Cch^{k+1}(\Ep,\Mod)$
  and $\dM^2\varphi = 0$.  Hence $(\Cch^*(\Ep,\Mod),\dM)$ is a cochain complex,
  whose cohomology $\mathrm{H}^*(\Ep,\Mod)$ is the
  \textit{Lie conformal  algebra cohomology} of $\Ep$ with values in $(\Mod;\rep)$.
\end{theorem}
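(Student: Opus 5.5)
The plan is to check the two claims separately: first that $\dM\varphi$ is again a cochain, then that $\dM^2=0$. The second claim I would prove by a structural argument rather than by expanding formula \eqref{eq:coboundary} twice.

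\emph{Well-definedness.} For $\dM\varphi\in\Cch^{k+1}(\Ep,\Mod)$ I would verify (C1) and (C2) directly.

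For sesquilinearity in $a_i$ with $i\le k$, replace $a_i$ by $\partial a_i$ in each of the four groups of terms.
\begin{itemize}
\item In the terms $\rep(\partial a_i)_{\la_i}$ and $\{\partial a_{i\,\la_i}a_j\}$, the factor $-\la_i$ comes out by \eqref{eq:sesqui} and the module axiom $\rep(\partial a)_\la=-\la\rep(a)_\la$.
\item In the remaining terms, $\partial a_i$ sits in a slot of $\varphi$. There (C1) for $\varphi$ gives the same factor $-\la_i$.
\item One exception: when $a_i$ appears inside $\varphi$ in the last slot, or under $\rep(a_{k+1})_{\la_{k+1}^\ddag}$, the factor produced is $-\la_{k+1}^\ddag$. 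It must be rewritten using that $\partial^\Mod$ passes through $\rep(a)_\mu$ as $\partial+\mu$. This is exactly the bookkeeping that the identities \eqref{eq:sesqui-extra1}--\eqref{eq:sesqui-extra2} and \eqref{eq:mod-extra1}--\eqref{eq:mod-extra2} encode.
\end{itemize}

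For the last argument $a_{k+1}$ the same computation yields the factor $-\la_{k+1}^\ddag$.

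Antisymmetry (C2) I would reduce to transpositions of adjacent arguments. For transpositions among $a_1,\dots,a_k$ this is immediate from the symmetric shape of the sums. The transposition $a_k\leftrightarrow a_{k+1}$ is the delicate one. There I would use skew-symmetry \eqref{Skew-symmetry} in the form $\{a_\la b\}=-\{b_{-\la-\partial}a\}$, together with the substitution $\la_k\mapsto\la_{k+1}^\ddag$. This exchanges the first and last groups of terms, and the second and third.

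\emph{$\dM^2=0$.} Rather than expanding $\dM^2\varphi$ into dozens of terms, I would pass to the reduced/basic description of \cite{DK113}. Cochains in $\Cch^k$ correspond bijectively to the "basic" cochains $\tilde\varphi_{\la_1,\dots,\la_k}(a_1,\dots,a_k)\in\Mod[\la_1,\dots,\la_k]$. These are skew-symmetric, sesquilinear in every variable, and taken modulo $(\partial+\la_1+\dots+\la_k)$. The dictionary is obtained by setting $\la_k=\la_k^\ddag$.

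In the basic complex the differential takes the uniform Chevalley--Eilenberg shape
\[
\sum_i(-1)^{i+1}\rep(a_i)_{\la_i}\tilde\varphi(\dots\widehat{a_i}\dots)+\sum_{i<j}(-1)^{i+j}\tilde\varphi_{\la_i+\la_j,\dots}(\{a_{i\,\la_i}a_j\},\dots).
\]
I would then show that under the dictionary this operator becomes \eqref{eq:coboundary}. The terms with $i=k+1$ or $j=k+1$ in the uniform formula produce the second and fourth groups of \eqref{eq:coboundary}, once skew-symmetry is used to move $a_{k+1}$ to the last slot.

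With the uniform formula, $\dM^2=0$ follows exactly as in the classical Lie algebra case:
\begin{itemize}
\item the $\rep\rep$ terms cancel against the $\rep\{\cdot\}$ terms by the module identity \eqref{eq:module};
\item the double-bracket terms cancel by the Jacobi identity \eqref{Jacobi identity};
\item the mixed terms cancel in pairs by sign.
\end{itemize}
The degree-$0$ case I would check separately: $\dM^2\bar m$ vanishes by \eqref{eq:module} at $\la+\mu$ with $\mu=-\partial-\la$, and it is well defined because $\rep(a)_{-\partial}\partial m=0$.

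The main obstacle is the dictionary step: proving that the asymmetric formula \eqref{eq:coboundary} really is the transport of the symmetric basic differential. The tricky part is keeping track of where $\partial$ acts after the substitution $\la_{k+1}\mapsto\la_{k+1}^\ddag$. I would handle this first for $k=1,2$ by explicit computation, and then organize the general case by the position of $a_{k+1}$.
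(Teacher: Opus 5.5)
You follow the standard route behind the cited source. The paper gives no proof of its own; it defers to \cite{DK113}, which passes through the basic complex of \cite{BKV}. That route proves $\tilde d^{\,2}=0$ there, quotients by $(\p+\la_1+\dots+\la_k)$, and identifies the quotient with $\Cch^\bullet$ via $\la_k\mapsto\la_k^\ddag$.

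The step you call the main obstacle, matching the transported differential with \eqref{eq:coboundary}, is routine bookkeeping. It also shows that the third sum in \eqref{eq:coboundary} must run over $1\le i<j\le k$ with $a_i,a_j$ removed from the arguments, which is how you implicitly read it. The step that actually carries the argument is the bijection, and you assert it without proof.

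To get $\dM^2\varphi=\mathrm{ev}(\tilde d^{\,2}\tilde\varphi)=0$ you need every $\varphi\in\Cch^k$ to equal $\mathrm{ev}(\tilde\varphi)$ for some basic cochain $\tilde\varphi$. Injectivity on the quotient is fine. A basic cochain killed by $\la_k\mapsto\la_k^\ddag$ is divisible by $\p+\la_1+\dots+\la_k$, which is a non-zero-divisor on $\Mod[\la_1,\dots,\la_k]$, so the quotient is again sesquilinear and skew-symmetric.

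Surjectivity needs an explicit lift. Choose a $\Rg$-basis $\{e_\alpha\}$ of $\Ep$ and put $\tilde\varphi'_{\la_1,\dots,\la_k}(e_{\alpha_1},\dots,e_{\alpha_k}):=\varphi_{\la_1,\dots,\la_{k-1}}(e_{\alpha_1},\dots,e_{\alpha_k})$. Extend this sesquilinearly in all slots and antisymmetrize; (C2) shows that each permuted term evaluates back to $\pm\varphi$. This construction uses that $\Ep$ is free.

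Without freeness the claim is false. Take $\Ep=\C C$ with $\p C=0$ and $\Mod=\C$ trivial. Every basic $1$-cochain vanishes, since $0=\tilde\varphi_\la(\p C)=-\la\tilde\varphi_\la(C)$, whereas $\Cch^1(\Ep,\Mod)=\mathrm{Hom}_{\Rg}(\Ep,\Mod)=\C$. The paper only assumes finite generation, and torsion occurs naturally, e.g.\ the central element $C$ of the centrally extended Virasoro conformal algebra. There $\dM\varphi$ for $\varphi\in\Cch^1$ genuinely depends on $\varphi(C)$ through $\varphi(\{L_\la L\})$. So as it stands your argument proves $\dM^2=0$ only for free $\Ep$.

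The repair is short.
\begin{enumerate}
\item If $p(\p)t=0$ with $p\neq 0$, then $p(-\la)\{t_\la b\}=0$ and $p(-\la)\rep(t)_\la=0$. Hence the torsion submodule $T\subset\Ep$ is a central ideal acting trivially on $\Mod$.
\item For $k\ge 2$, (C1) forces $\varphi\in\Cch^k$ to vanish when one of the first $k-1$ arguments is torsion, and (C2) moves this to the last slot. So $\varphi$ and $\dM\varphi$ are pulled back from $\Ep/T$.
\item $\Ep/T$ is free, being finitely generated and torsion-free over the PID $\Rg$, so your transport applies there. It then also yields $\dM\varphi\in\Cch^{k+1}$ automatically, since $\mathrm{ev}$ of a basic cochain satisfies (C1) and (C2).
\item Degrees $0$ and $1$ must be checked directly. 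You already handle degree $0$; degree $1$ is the short classical computation using the Jacobi identity and \eqref{eq:module}.
\end{enumerate}
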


%--------------------------------------------------------------------------
\subsection{The Nijenhuis-Richardson bracket and bigrading}
\label{subsec:FN}
%--------------------------------------------------------------------------

We now enrich the cochain complex $\Cch^*(\Ep,\Ep)$ with an additional
algebraic structure,  a graded Lie bracket  that underlies all the
operator-theoretic constructions in this paper.

A permutation $\sigma\in\perm_n$ is an \textit{$(i,n-i)$-unshuffle}
if $\sigma(1)<\cdots<\sigma(i)$ and $\sigma(i+1)<\cdots<\sigma(n)$; the
set of all such permutations is denoted $\perm_{(i,n-i)}$.

Set $\Cch^*(\Ep,\Ep) := \bigoplus_{k\geq1}\Cch^k(\Ep,\Ep)$.  For
$f\in\Cch^m(\Ep,\Ep)$ and $g\in\Cch^n(\Ep,\Ep)$, define the
\textit{Nijenhuis-Richardson 
%(FN)
(NR) bracket}
\begin{equation}\label{eq:FN-bracket}
  [f,g]_{NR} := f \star g - (-1)^{(m-1)(n-1)}\,g\star f,
\end{equation}
where the \emph{pre-bracket} $(f\star g)\in\Cch^{m+n-1}(\Ep,\Ep)$ is
\begin{align*}
  &(f\star g)_{\la_1,\ldots,\la_{m+n-2}}(a_1,\ldots,a_{m+n-1}) \\
  =&  \sum_{\sigma\in\perm_{(n,m-1)}}(-1)^\sigma
    f_{\la_{\sigma(1)}+\cdots+\la_{\sigma(n)},\,
       \la_{\sigma(n+1)},\ldots,\la_{\sigma(m+n-2)}}\\\quad\quad\quad\quad\quad\quad\quad\quad\quad&
    \bigl(g_{\la_{\sigma(1)},\ldots,\la_{\sigma(n-1)}}
          (a_{\sigma(1)},\ldots,a_{\sigma(n)}),
    a_{\sigma(n+1)},\ldots,a_{\sigma(m+n-1)}\bigr)
    \big|_{\la_{m+n-1}\mapsto\la_{m+n-1}^\ddag},
\end{align*}
with $\la_{m+n-1}^\ddag := -\sum_{i=1}^{m+n-2}\la_i - \partial$.

\begin{lemma}[{\cite{DK13}}]\label{lem:FN-gLa}
 The pair  $(\Cch^*(\Ep,\Ep),[-,-]_{NR})$ defines a graded Lie algebra.  A $2$-cochain
  $\omega\in\Cch^2(\Ep,\Ep)$ defines a Lie conformal  algebra structure
  on $\Ep$ via $\{a_\lambda b\}:=\omega_\lambda(a,b)$ if and only if
  $[\omega,\omega]_{NR} = 0$.
\end{lemma}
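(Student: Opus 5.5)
The plan has two parts. First I show that the pre-bracket $\star$ makes $\Cch^*(\Ep,\Ep)$, with the shifted degree $|f|=m-1$ for $f\in\Cch^m(\Ep,\Ep)$, a graded right pre-Lie algebra. Then I compute $[\omega,\omega]_{NR}$ directly.

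\textbf{Step 1: $f\star g$ is a well-defined cochain.} I check that $f\star g$ lies in $\Cch^{m+n-1}(\Ep,\Ep)$, that is, it satisfies (C1) and (C2).
\begin{itemize}
\item For (C1), sesquilinearity in the first $m+n-2$ slots follows from sesquilinearity of $f$ and $g$. The $\lambda$-label of the inner output $g(\dots)$ is $\la_{\sigma(1)}+\cdots+\la_{\sigma(n)}$. By (C1) for $g$, the inner output obeys $\p\mapsto-$(sum of its labels), so $f$ receives a correctly labelled argument.
\item The last slot needs the substitution $\la_{m+n-1}\mapsto\la^\ddag_{m+n-1}$. For this I use the identities \eqref{eq:sesqui-extra1}--\eqref{eq:sesqui-extra2}, which say exactly that the $-\la-\p$ substitutions can be moved through nested arguments.
\item For (C2), summing over unshuffles with the sign $(-1)^\sigma$, combined with the antisymmetry of $f$ and $g$, gives full graded antisymmetry. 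This is the standard argument: any permutation factors as an unshuffle composed with permutations inside the blocks.
\end{itemize}

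\textbf{Step 2: graded right pre-Lie identity.} I show that the associator
\[
(f\star g)\star h - f\star(g\star h)
\]
is graded symmetric in $g$ and $h$. Expanding both terms as sums over nested unshuffles, the terms of $f\star(g\star h)$ where $h$ is plugged into $g$ cancel. What remains are terms where $g$ and $h$ are inserted into disjoint slots of $f$, and these are manifestly symmetric under swapping $g$ and $h$ with the Koszul sign $(-1)^{(n-1)(p-1)}$.

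Given this, the commutator of a graded pre-Lie product is a graded Lie bracket. Antisymmetry is immediate from \eqref{eq:FN-bracket}, and the graded Jacobi identity follows by the usual formal manipulation. This gives the first claim.

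I expect the main obstacle to be keeping track of the $\lambda$-variables in Step 2. In particular, when $h$ is inserted into the last argument of $f\star g$, the $\ddag$-substitution is triggered. I would handle this by writing every cochain in the symmetric form $\varphi_{\la_1,\dots,\la_{k-1}}(a_1,\dots,a_k)$ with $\la_k=\la_k^\ddag$ understood, and treating it as a function of all $k$ variables constrained by $\sum\la_i=-\p$. Then substitutions commute formally and the classical Nijenhuis--Richardson computation transfers verbatim.

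\textbf{Step 3: the bracket $[\omega,\omega]_{NR}$.} Take $\omega\in\Cch^2$, so that $|\omega|=1$. Then
\[
[\omega,\omega]_{NR}=2\,\omega\star\omega .
\]
Being a $2$-cochain, $\omega$ is sesquilinear and skew-symmetric in the conformal sense, so $\{a_\la b\}:=\omega_\la(a,b)$ automatically satisfies \eqref{eq:sesqui} and \eqref{Skew-symmetry}. It remains to evaluate $\omega\star\omega$ over the three $(2,1)$-unshuffles. This gives
\[
(\omega\star\omega)_{\la,\mu}(a,b,c)=\{\{a_\la b\}_{\la+\mu}c\}-\{\{a_\la c\}_{\la+\la_3^\ddag}b\}+\{\{b_\mu c\}_{\mu+\la_3^\ddag}a\}.
\]
Applying skew-symmetry to the last two terms rewrites them as $\{b_\mu\{a_\la c\}\}$ and $-\{a_\la\{b_\mu c\}\}$, up to a global sign. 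Hence $\omega\star\omega=0$ if and only if the Jacobi identity \eqref{Jacobi identity} holds, which proves the second claim.
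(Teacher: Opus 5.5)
Your argument is correct and is the standard one. The paper gives no proof of this lemma and simply imports it from De Sole--Kac, whose approach is the same as yours: work in the reduced form $\C[\la_1,\ldots,\la_k]\otimes\Ep/\langle\p+\la_1+\cdots+\la_k\rangle$ (which is exactly what the $\ddag$-substitution encodes), use sesquilinearity of $f$ in the slot receiving $g$'s output to see that insertion is well defined, check right pre-Lie symmetry of $\star$, and note that $\omega\star\omega$ is exactly minus the Jacobiator, as your three unshuffle terms show. The only wording to fix is in Step 2: the terms of $(f\star g)\star h$ in which $h$ is inserted into $g$ are the ones that reproduce, and cancel, $f\star(g\star h)$. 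What remains are the terms with $g$ and $h$ in disjoint slots of $f$.
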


Let $\Vs$ and $\Ws$ be $\Rg$-modules (not yet assumed to carry any
algebraic structure). Elements of $\Vs$ will be written $a,b,a_i,\ldots$
and elements of $\Ws$ as $u,v,u_i,\ldots$.

\begin{definition}\label{def:bidegree}
A cochain $f\in\Cch^{k+l+1}(\Vs\oplus\Ws,\Vs\oplus\Ws)$ has
\textit{bidegree} $k\mid l$ (written $\|f\|=k\mid l$) if the following
three conditions hold:
\begin{itemize}
  \item[\rm(i)] If $X\in\Vs^{\otimes k+1}\otimes\Ws^{\otimes l}$,
    then $f(X)\in\C[\la_1,\ldots,\la_{k+l}]\otimes\Vs$.
  \item[\rm(ii)] If $X\in\Vs^{\otimes k}\otimes\Ws^{\otimes l+1}$,
    then $f(X)\in\C[\la_1,\ldots,\la_{k+l}]\otimes\Ws$.
  \item[\rm(iii)] $f(X)=0$ in all remaining input-type combinations.
\end{itemize}
We call $f$ \textit{bihomogeneous} when it has a bidegree.
\end{definition}

\begin{lemma}\label{lem:FN-bigrade}
  If $\|f\|=k_f\mid l_f$ and $\|g\|=k_g\mid l_g$, then
  $[f,g]_{NR}$ has bidegree $k_f+k_g\mid l_f+l_g$.
\end{lemma}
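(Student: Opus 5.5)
It suffices to show that each pre-bracket $f\star g$ and $g\star f$ has bidegree $k_f+k_g\mid l_f+l_g$, since the NR bracket is a linear combination of these, and the set of cochains of a fixed bidegree is a linear subspace. First, arity: $f\star g$ lies in $\Cch^{m+n-1}$ with $m=k_f+l_f+1$ and $n=k_g+l_g+1$. So $m+n-1=(k_f+k_g)+(l_f+l_g)+1$, which is exactly the arity required for bidegree $k_f+k_g\mid l_f+l_g$. The sesquilinearity and antisymmetry conditions (C1), (C2) are already guaranteed by Lemma~\ref{lem:FN-gLa}, so only conditions (i)--(iii) of Definition~\ref{def:bidegree} need checking.

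Next I would evaluate $f\star g$ on homogeneous inputs, meaning each $a_i$ lies in $\Vs$ or in $\Ws$, with $p$ entries from $\Vs$ and $q$ from $\Ws$, where $p+q=m+n-1$. By multilinearity this is enough. Take a single term of the unshuffle sum. The inner value $g(a_{\sigma(1)},\ldots,a_{\sigma(n)})$ involves some $p_1$ elements of $\Vs$ and $q_1$ of $\Ws$. By the bidegree of $g$, this value vanishes unless $(p_1,q_1)=(k_g+1,l_g)$, in which case it lies in $\Vs$-valued polynomials, or $(p_1,q_1)=(k_g,l_g+1)$, in which case it lies in $\Ws$-valued polynomials. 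The inner value is a polynomial in the $\la$'s with coefficients in $\Vs$ or $\Ws$, and $f$ is applied coefficientwise. This uses $\C$-linearity, and the substitution $\la\mapsto\la^\ddag$ involving $\partial$ does not move values out of $\Vs$ or $\Ws$, because both are $\Rg$-submodules.

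I then count the entries fed into $f$. In the first case $f$ receives $(p-k_g)$ elements of $\Vs$ and $(q-l_g)$ of $\Ws$. In the second case it receives $(p-k_g)$ and $(q-l_g)$ as well: the $\Ws$-valued output of $g$ compensates the one extra $\Ws$ input that $g$ consumed. So in either case, by the bidegree of $f$, the term is nonzero only if $(p-k_g,q-l_g)$ equals $(k_f+1,l_f)$, giving a $\Vs$-valued result, or $(k_f,l_f+1)$, giving a $\Ws$-valued result. That is, the term is nonzero only if $(p,q)=(k_f+k_g+1,l_f+l_g)$, with value in $\Vs$, or $(p,q)=(k_f+k_g,l_f+l_g+1)$, with value in $\Ws$, and it vanishes otherwise. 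Summing over $\sigma$ gives (i), (ii) and (iii) for $f\star g$. The symmetric argument with $f$ and $g$ exchanged gives the same for $g\star f$, and linearity finishes the proof.

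The only step that needs care is the bookkeeping in the middle paragraph. One must note that the unshuffle sum does not mix input types in any problematic way: each term is analysed separately, and the conclusion depends only on the total counts $(p,q)$, not on which term is considered. One must also note that the $\la$-variable relabelling is irrelevant to which summand the values lie in.
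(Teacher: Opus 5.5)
Your argument is correct. The paper gives no proof of this lemma; it is stated in the preliminaries as background. Your proof is the standard counting argument.

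You reduce to the pre-brackets $f\star g$ and $g\star f$ by linearity and evaluate on inputs that each lie in $\Vs$ or $\Ws$. The key observation is that the inner cochain $g$ can consume only the types $(k_g+1,l_g)$ or $(k_g,l_g+1)$. In either case the outer cochain $f$ receives exactly $p-k_g$ entries from $\Vs$ and $q-l_g$ from $\Ws$. This forces $(p,q)$ to be $(k_f+k_g+1,\,l_f+l_g)$ with $\Vs$-values or $(k_f+k_g,\,l_f+l_g+1)$ with $\Ws$-values, and every other term vanishes. The arity check and the appeal to Lemma~\ref{lem:FN-gLa} for (C1)--(C2) are also fine.

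One small point deserves a sentence of its own. Definition~\ref{def:bidegree} is phrased for the ordered block $\Vs^{\otimes k+1}\otimes\Ws^{\otimes l}$, but inside the unshuffle sum $f$ and $g$ are evaluated on interleaved arguments. Passing between the two uses (C2). Its substitution $\la\mapsto\la^\ddag$ only lets $\p$ act on the output, which stays in $\Vs$ or $\Ws$ because these are $\Rg$-submodules. This is the same observation you already make about the $\la$-relabelling, applied one step earlier.
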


\begin{lemma}\label{lem:FN-zero}
  If $\|f\|=-1\mid l$ (resp.\ $l\mid-1$) and $\|g\|=-1\mid k$
  (resp.\ $k\mid-1$), then $[f,g]_{NR} = 0$.
\end{lemma}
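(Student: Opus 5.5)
The plan is to handle the case $\|f\|=-1\mid l$, $\|g\|=-1\mid k$ directly from Definition~\ref{def:bidegree}; the case $l\mid-1$, $k\mid-1$ is the same argument with the roles of $\Vs$ and $\Ws$ exchanged. First I unpack what bidegree $-1\mid l$ means. Here $f\in\Cch^{l}(\Vs\oplus\Ws,\Vs\oplus\Ws)$. Condition (i) would require inputs in $\Vs^{\otimes 0}\otimes\Ws^{\otimes l}$ to land in $\Vs$. Condition (ii) concerns inputs in $\Vs^{\otimes -1}\otimes\Ws^{\otimes l+1}$, which is vacuous. Condition (iii) says $f$ vanishes on every input containing at least one element of $\Vs$. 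So $f$ is nonzero only on inputs lying entirely in $\Ws$, and there its values lie in $\Vs$. Similarly, $g\in\Cch^{k}$ is nonzero only on $\Ws^{\otimes k}$, with values in $\Vs$. By multilinearity, and by (C2) to move arguments around, $f$ kills any tuple in which some entry is in $\Vs$.

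Next I show that both pre-brackets $f\star g$ and $g\star f$ vanish identically, which is enough to give $[f,g]_{NR}=0$. In $(f\star g)(a_1,\ldots,a_{k+l-1})$, each summand has the form $f(g(a_{\sigma(1)},\ldots,a_{\sigma(k)}),a_{\sigma(k+1)},\ldots)$. Decompose each argument into its $\Vs$ and $\Ws$ parts and expand multilinearly. The inner term $g(\ldots)$ is either $0$ or an element of $\Vs$ (tensored with polynomials in the $\lambda$'s). In either case the first argument of $f$ lies in $\C[\boldsymbol\lambda]\otimes\Vs$, so $f$ vanishes on it by (iii). The same argument applies to $g\star f$.

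The only point that needs care is that the $\lambda$-substitutions and the $\partial$-shift in $\la^\ddag$ do not affect this vanishing. I will argue this by $\C[\boldsymbol\lambda]$-linearity: the substitutions only change polynomial coefficients, and the sesquilinearity conditions (C1) preserve the $\Vs$/$\Ws$ decomposition, since $\Vs$ and $\Ws$ are $\Rg$-submodules. So a cochain that vanishes on a $\Vs$-valued argument still vanishes after these substitutions. I expect this bookkeeping to be the only obstacle, and it is a minor one.
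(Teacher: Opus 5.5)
Your proof is correct. Both cochains are supported only on all-$\Ws$ inputs with values in $\Vs$ (resp.\ only on all-$\Vs$ inputs with values in $\Ws$). So every summand of $f\star g$ and of $g\star f$ feeds an element of $\C[\boldsymbol\lambda]\otimes\Vs$ (resp.\ $\C[\boldsymbol\lambda]\otimes\Ws$) into a slot where the outer cochain vanishes, and your observation that $\Vs,\Ws$ are $\Rg$-submodules correctly takes care of the $\partial$ appearing in the $\la^\ddag$-substitution.

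The paper states this lemma without proof, as the conformal analogue of the classical fact. Your argument, showing that both pre-brackets vanish, is the standard one it implicitly relies on.
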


Given $\alpha:\Vs\otimes\Vs\to\Vs[\lambda]$ and
$\beta:\Vs\otimes\Ws\to\Ws[\lambda]$, their \textit{lifts} to
$\Cch^2(\Vs\oplus\Ws,\Vs\oplus\Ws)$ are defined by
\begin{align}
  \widehat\alpha_\la\bigl((a_1,u_1),(a_2,u_2)\bigr)
  &:= \bigl(\alpha_\la(a_1,a_2),\,0\bigr), \label{eq:lift-a}\\
  \widehat\beta_\la\bigl((a_1,u_1),(a_2,u_2)\bigr)
  &:= \bigl(0,\,\beta_\la(a_1,u_2)-\beta_{-\p-\la}(a_2,u_1)\bigr),
  \label{eq:lift-b}
\end{align}
and both have bidegree $1\mid 0$.  Setting $\widehat\nu := \widehat\alpha+\widehat\beta$,
we obtain the semi-direct product type operation
\begin{equation}\label{eq:nu-hat}
  \widehat\nu_\la\bigl((a_1,u_1),(a_2,u_2)\bigr)
  = \bigl(\alpha_\la(a_1,a_2),\;
           \beta_\la(a_1,u_2)-\beta_{-\la-\p}(a_2,u_1)\bigr).
\end{equation}

The following result recasts the module axiom in terms of the 
%FN 
NR bracket
and supplies the cohomological interpretation of $\dM$.

\begin{proposition}\label{prop:FN-module}
  Let $(\Ep,\omega)$ be a Lie conformal  algebra and
  $\rep:\Ep\to\mathrm{Cend}(\Mod)$ a map with
  $\rep(\partial a)_\lambda=-\lambda\rep(a)_\lambda$.  Then
  $(\Mod;\rep)$ is a module if and only if
  $[\widehat\omega+\widehat\rep,\;\widehat\omega+\widehat\rep]_{NR} = 0$.
  Moreover, the coboundary operator \eqref{eq:coboundary} satisfies
  \begin{equation}\label{eq:FN-coboundary}
    \dM_{\omega+\rep}\,\varphi
    = (-1)^{k-1}[\widehat\omega+\widehat\rep,\;\widehat\varphi]_{NR},
    \quad \varphi\in\Cch^k(\Ep,\Mod).
  \end{equation}
\end{proposition}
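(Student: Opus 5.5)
The plan is to identify $\widehat\omega+\widehat\rep$ with the semi-direct product bracket and then apply Lemma~\ref{lem:FN-gLa} for the first claim. The second claim will follow from a bidegree argument combined with term-by-term matching.

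\emph{Module criterion.} Put $\Vs=\Ep$ and $\Ws=\Mod$, and let $\alpha=\omega$, $\beta=\rep$ (that is, $\beta_\la(a,m)=\rep(a)_\la m$). By \eqref{eq:nu-hat}, $\widehat\omega+\widehat\rep$ is exactly the bracket \eqref{eq:semidirect} on $\Ep\oplus\Mod$. Before invoking Lemma~\ref{lem:FN-gLa}, I check that this map is a genuine element of $\Cch^2(\Ep\oplus\Mod,\Ep\oplus\Mod)$. Sesquilinearity in the first slot follows from $\rep(\p a)_\la=-\la\rep(a)_\la$ and the conformal linearity of $\rep(a)_\la$. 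Skew-symmetry is built into the lift \eqref{eq:lift-b}, together with the skew-symmetry of $\omega$.

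Lemma~\ref{lem:FN-gLa} then says that $[\widehat\omega+\widehat\rep,\widehat\omega+\widehat\rep]_{NR}=0$ holds iff the bracket satisfies the Jacobi identity on $\Ep\oplus\Mod$. I expand Jacobi on triples by input type:
\begin{itemize}
\item on $(a,b,c)$ it is Jacobi for $\omega$, which holds by hypothesis;
\item on $(a,b,m)$ it is precisely $\rep(a)_\la\rep(b)_\mu m-\rep(b)_\mu\rep(a)_\la m=\rep(\{a_\la b\})_{\la+\mu}m$;
\item triples with two or more entries in $\Mod$ vanish identically, since $\Mod$ brackets to zero with itself.
\end{itemize}
The triples $(a,m,b)$ and $(m,a,b)$ reduce to the $(a,b,m)$ case by skew-symmetry. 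Hence the Jacobi identity is equivalent to \eqref{eq:module}. The bigrading gives the same conclusion quickly: $[\widehat\omega,\widehat\omega]$ has bidegree $2|0$ on $\Ep^{\otimes3}$, and $2[\widehat\omega,\widehat\rep]+[\widehat\rep,\widehat\rep]$ carries the module condition.

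\emph{Coboundary formula.} For $\varphi\in\Cch^k(\Ep,\Mod)$, define the lift $\widehat\varphi$ by $\widehat\varphi((a_1,m_1),\dots,(a_k,m_k))=(0,\varphi(a_1,\dots,a_k))$. This has bidegree $k\mid-1$. By Lemma~\ref{lem:FN-bigrade}, the bracket $[\widehat\omega+\widehat\rep,\widehat\varphi]_{NR}$ has bidegree $k+1\mid -1$, so it is the lift of some element of $\Cch^{k+1}(\Ep,\Mod)$. It is therefore enough to evaluate it on $(a_1,\dots,a_{k+1})\in\Ep^{\otimes k+1}$.

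I expand $(\widehat\omega+\widehat\rep)\star\widehat\varphi$ and $\widehat\varphi\star(\widehat\omega+\widehat\rep)$ as follows.
\begin{itemize}
\item In $\widehat\varphi\star\widehat\rep$ the inner output lies in $\Mod$, which $\widehat\varphi$ kills, so this term vanishes.
\item In $\widehat\omega\star\widehat\varphi$ the outer $\omega$ receives a $\Mod$-input, so it also vanishes.
\item $\widehat\rep\star\widehat\varphi$, summed over $(k,1)$-unshuffles, produces the terms $\rep(a_i)_{\la_i}\varphi(\dots\widehat{a_i}\dots)$. The term with $i=k+1$ is the last one in \eqref{eq:coboundary}; it picks up $\la^\ddag_{k+1}$ because of the substitution rule.
\item $\widehat\varphi\star\widehat\omega$, summed over $(2,k-1)$-unshuffles, produces the terms $\varphi(\{a_{i\,\la_i}a_j\},\dots)$.
\end{itemize}
To put the last group into the normal form of \eqref{eq:coboundary}, I use the antisymmetry (C2) to move the bracket argument to the final slot. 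Pairs with $j=k+1$ give the second sum, and pairs with $j\le k$ give the third sum, with $\la^\ddag_{k+1}$ appearing.

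The overall sign $(-1)^{k-1}$ comes from $-(-1)^{(2-1)(k-1)}$ in \eqref{eq:FN-bracket} combined with the unshuffle signs. I will calibrate it by checking the case $k=1$ by hand: there $[\widehat\omega+\widehat\rep,\widehat\varphi]$ applied to $(a_1,a_2)$ should equal $\rep(a_1)_{\la}\varphi(a_2)-\rep(a_2)_{-\la-\p}\varphi(a_1)-\varphi(\{a_{1\la}a_2\})$.

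The main obstacle is the sign and spectral-parameter bookkeeping: matching the $\la$-substitutions $\la^\ddag$ and the permutation signs produced by (C2) against the three different sums in \eqref{eq:coboundary}. I would organize this by the position of the $(k+1)$-th argument in each unshuffle, using \eqref{eq:sesqui-extra1}--\eqref{eq:sesqui-extra2} to rewrite the shifted parameters.
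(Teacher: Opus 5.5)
The paper gives no proof of this proposition: it is stated in the preliminaries as background, so there is nothing in the paper to compare against. Your route is the standard one and it is correct. You realise $\widehat\omega+\widehat\rep$ as the semi-direct product bracket and apply Lemma~\ref{lem:FN-gLa}. Then the bidegree $k+1\mid-1$ reduces the second claim to evaluating on $\Ep^{\otimes k+1}$, where only $\widehat\rep\star\widehat\varphi$ and $\widehat\varphi\star\widehat\omega$ survive.

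Three points need tightening.

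(i) The third sum of \eqref{eq:coboundary}, as printed, runs over $i<j\le k+1$ with arguments $(a_1,\ldots,a_k,\{a_{i\,\la_i}a_j\})$. It is garbled: read literally, the identity already fails at $k=1$. Your split ($j=k+1$ gives the second sum, $j\le k$ gives the third) is right for the De~Sole--Kac form. There the third sum runs over $1\le i<j\le k$ with arguments $(a_1,\ldots,\widehat{a_i},\ldots,\widehat{a_j},\ldots,a_{k+1},\{a_{i\,\la_i}a_j\})$. Say explicitly that this is the formula you match.

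(ii) A $k=1$ check cannot calibrate a $k$-dependent sign, but the general count is short.

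For the $\rep$-terms: the $(k,1)$-unshuffle placing $a_i$ last has sign $(-1)^{k+1-i}$, and $\widehat\rep_\mu((0,m),(a,0))=(0,-\rep(a)_{-\mu-\p}m)$. So the coefficient is $(-1)^{k-i}=(-1)^{k-1}(-1)^{i+1}$. After $\la_{k+1}\mapsto\la_{k+1}^\ddag$, the subscript $-\mu-\p$ becomes $\la_i$ for $i\le k$ and $\la_{k+1}^\ddag$ for $i=k+1$.

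For the bracket terms, three factors combine:
\begin{itemize}
\item the $(2,k-1)$-unshuffle sign is $(-1)^{i+j+1}$;
\item the NR prefactor is $-(-1)^{k-1}$;
\item moving the first argument to the last slot via \eqref{eq:antisym} costs $(-1)^{k-1}$.
\end{itemize}
Multiplying by the overall $(-1)^{k-1}$ gives $(-1)^{k+i+j+1}$. This equals $(-1)^i$ when $j=k+1$ and $(-1)^{k+i+j+1}$ when $j\le k$, exactly as required.

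The spectral parameters also match. For $j=k+1$, the first-slot parameter $\la_i+\la_{k+1}^\ddag=-\sum_{l\le k,\,l\neq i}\la_l-\p$ is precisely the implicit last parameter after the move. For $j\le k$, the argument $a_{k+1}$ acquires the explicit parameter $\la_{k+1}^\ddag$.

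(iii) Your aside about the bigrading is imprecise. $\widehat\omega$ and $\widehat\rep$ both have bidegree $1\mid 0$, so $[\widehat\omega,\widehat\omega]_{NR}$, $[\widehat\omega,\widehat\rep]_{NR}$ and $[\widehat\rep,\widehat\rep]_{NR}$ all have bidegree $2\mid 0$. What separates the Jacobi identity of $\omega$ from the module identity is the input type ($\Ep^{\otimes 3}$ versus $\Ep^{\otimes 2}\otimes\Mod$), not the bidegree. Your main argument already uses input type, so nothing breaks.
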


%--------------------------------------------------------------------------
\subsection{Twilled and quasi-twilled  Lie conformal algebras}
\label{subsec:twilled}
%--------------------------------------------------------------------------

Let $(\Ep,\{\cdot_\lambda\cdot\})$ be a Lie conformal algebra with a
fixed direct-sum decomposition $\Ep = \Vs\oplus\Ws$ of $\Rg$-modules.
Denote by $\mathrm{pr}_\Vs:\Ep\to\Vs$ and $\mathrm{pr}_\Ws:\Ep\to\Ws$
the natural projections, and define six structural maps by
\begin{align*}
  \{a_\la b\}_\Vs
    &:= \mathrm{pr}_\Vs(\{a_\la b\}), &
  \rep_\Ws(u)_\la a
    &:= \mathrm{pr}_\Vs(\{u_\la a\}), &
  \psi_{2,\la}(u,v)
    &:= \mathrm{pr}_\Vs(\{u_\la v\}), \\
  \{u_\la v\}_\Ws
    &:= \mathrm{pr}_\Ws(\{u_\la v\}), &
  \rep_\Vs(a)_\la v
    &:= \mathrm{pr}_\Ws(\{a_\la v\}), &
  \psi_{1,\la}(a,b)
    &:= \mathrm{pr}_\Ws(\{a_\la b\}),
\end{align*}
for $a,b\in\Vs$ and $u,v\in\Ws$.  The full $\lambda$-bracket on $\Ep$ reads
{\small\begin{equation}\label{eq:full-bracket}
  \{(a,u)_\la(b,v)\}
  = \bigl(\{a_\la b\}_\Vs + \rep_\Ws(u)_\la b
          - \rep_\Ws(v)_{-\la-\p}a + \psi_{2,\la}(u,v),\;
          \{u_\la v\}_\Ws + \rep_\Vs(a)_\la v
          - \rep_\Vs(b)_{-\la-\p}u + \psi_{1,\la}(a,b)\bigr).
\end{equation}}

Denoting the Lie conformal algebra structure on $\Ep$ by $\str\in\Cch^2(\Ep,\Ep)$,
Lemma~\ref{lem:FN-bigrade} yields the unique bihomogeneous decomposition
\begin{equation}\label{eq:str-decomp}
  \str = \shf + \smu + \smv + \widehat\psi_2,
\end{equation}
where $\|\shf\|=2\mid{-1}$, $\|\smu\|=1\mid 0$, $\|\smv\|=0\mid 1$,
$\|\widehat\psi_2\|=-1\mid 2$, with explicit formulas
\begin{align}
  \shf_\la((a,u),(b,v))
    &= (0,\,\psi_{1,\la}(a,b)),
    \label{eq:comp-psi1}\\
  \widehat{\nu}_{1,\la}((a,u),(b,v))
    &= (\{a_\la b\}_\Vs,\;
        \rep_\Vs(a)_\la v - \rep_\Vs(b)_{-\la-\p}u),
    \label{eq:comp-nu1}\\
  \widehat{\nu}_{2,\la}((a,u),(b,v))
    &= (\rep_\Ws(u)_\la b - \rep_\Ws(v)_{-\la-\p}a,\;
        \{u_\la v\}_\Ws),
    \label{eq:comp-nu2}\\
  \widehat\psi_{2,\la}((a,u),(b,v))
    &= (\psi_{2,\la}(u,v),\,0).
    \label{eq:comp-psi2}
\end{align}

The Maurer-Cartan equation $[\str,\str]_{NR}=0$ decomposes by bidegree into
\begin{equation}\label{eq:MC-system}
  \begin{cases}
    [\smu,\shf]_{NR} = 0, \\[3pt]
    \half[\smu,\smu]_{NR} + [\smv,\shf]_{NR} = 0, \\[3pt]
    [\smu,\smv]_{NR} + [\shf,\widehat\psi_2]_{NR} = 0, \\[3pt]
    \half[\smv,\smv]_{NR} + [\smu,\widehat\psi_2]_{NR} = 0, \\[3pt]
    [\smv,\widehat\psi_2]_{NR} = 0.
  \end{cases}
\end{equation}

\begin{definition}\label{def:twilled}
Let $\Ep=\Vs\oplus\Ws$ be a Lie conformal algebra with structure
\eqref{eq:str-decomp}.
\begin{itemize}
  \item[\rm(i)] The triple $(\Ep,\Vs,\Ws)$ is called a
    \textit{quasi-twilled Lie conformal  algebra} if $\psi_2=0$,
    i.e.\ $\Ws$ is a sub-algebra of $\Ep$. The structure then reduces to
    $\str=\shf_1+\smu+\smv$.
  \item[\rm(ii)] The triple $(\Ep,\Vs,\Ws)$ is called an
    \textit{twilled Lie conformal  algebra} (or \textit{bicrossed
    product}) if $\psi_1=\psi_2=0$, i.e.\ both $\Vs$ and $\Ws$ are
    sub-algebras of $\Ep$.  We then write $\Ep=\Vs\bowtie\Ws$.
\end{itemize}
\end{definition}

When $(\Ep,\Vs,\Ws)$ is quasi-twilled, Equation~\eqref{eq:MC-system}
reduces to the four conditions
\begin{equation}\label{eq:quasi-MC}
  [\smu,\shf]_{NR} = 0,\quad
  \half[\smu,\smu]_{NR} + [\smv,\shf]_{NR} = 0,\quad
  [\smu,\smv]_{NR} = 0,\quad
  \half[\smv,\smv]_{NR} = 0.
\end{equation}
In particular, $(\Ws,\{\cdot_\la\cdot\}_\Ws)$ is a Lie conformal  algebra
and $(\Vs;\rep_\Ws)$ is a module over it, while $(\Vs,\{\cdot_\la\cdot\}_\Vs)$
need not be a Lie conformal  algebra in general (it is so precisely in
the twilled case).

%--------------------------------------------------------------------------
\subsection{$L_\infty$-algebras and Maurer--Cartan elements}
\label{subsec:Linf}
%--------------------------------------------------------------------------

\begin{definition}[{\cite{LS,LM}}]\label{def:Linf}
An \textit{$L_\infty$-algebra} is a graded vector space
$\mathfrak{g}=\bigoplus_{i\in\Nat}\mathfrak{g}^i$ equipped with
multilinear maps $\ell_k:\otimes^k\mathfrak{g}\to\mathfrak{g}$ of
degree $2-k$ ($k\geq 1$) satisfying, for all homogeneous
$v_1,\ldots,v_n\in\mathfrak{g}$:
\begin{itemize}
  \item[\rm(i)] \emph{Graded antisymmetry:}
    $\ell_k(v_{\sigma(1)},\ldots,v_{\sigma(k)})
    = \chi(\sigma)\,\ell_k(v_1,\ldots,v_k)$
    for every $\sigma\in\perm_k$, where $\chi(\sigma)$ is the Koszul sign.
  \item[\rm(ii)] \emph{Higher Jacobi identity:} for each $n\geq1$,
    \[
      \sum_{i+j=n+1}(-1)^i
      \sum_{\sigma\in\perm_{(i,n-i)}}
      \!\!\chi(\sigma)\;\ell_j\!\bigl(\ell_i(v_{\sigma(1)},\ldots,v_{\sigma(i)}),
      v_{\sigma(i+1)},\ldots,v_{\sigma(n)}\bigr) = 0.
    \]
\end{itemize}
An element $\alpha\in\mathfrak{g}^1$ is a \textit{Maurer-Cartan element} if
\[
  \sum_{k=1}^\infty \frac{1}{k!}\,\ell_k(\alpha,\ldots,\alpha) = 0.
\]
\end{definition}

The following theorem together with \cite[Corollary~3.5]{Uch1}
supplies the $L_\infty$-algebra that governs all subsequent deformation
problems.

\begin{theorem}\label{thm:Linf-quasi}
  Let $(\Ep,\Vs,\Ws,\str)$ be a quasi-twilled Lie conformal  algebra
  with $\str = \shf+\smu+\smv$.  Define maps on $\Cch^*(\Ws,\Vs)$ by
  \begin{align}
    \delta(f)
    &:= [\smv,\,\widehat f]_{NR}, \label{eq:l1}\\
    \{f_1,f_2\}
    &:= (-1)^{m-1}[[\smu,\widehat f_1]_{NR},\,\widehat f_2]_{NR},
    \label{eq:l2}\\
    \{f_1,f_2,f_3\}
    &:= (-1)^{n-1}[[[\shf,\widehat f_1]_{NR},\,\widehat f_2]_{NR},
       \,\widehat f_3]_{NR},\label{eq:l3}
  \end{align}
  for all $f\in\Cch^*(\Ws,\Vs)$, $f_1\in\Cch^m(\Ws,\Vs)$,
  $f_2\in\Cch^n(\Ws,\Vs)$, $f_3\in\Cch^k(\Ws,\Vs)$.  Then
  $(\Cch^*(\Ws,\Vs),\,\delta,\,\{\cdot,\cdot\},\,\{\cdot,\cdot,\cdot\})$
  is an $L_\infty$-algebra.
\end{theorem}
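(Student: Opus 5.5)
This follows from Voronov's higher derived bracket construction, as the pointer to \cite[Corollary~3.5]{Uch1} suggests; that reference is the curved/quasi-twilled version of Voronov's theorem. I would set up V-data $(L,\mathfrak{h},P,\Delta)$ as follows.

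\textbf{The V-data.} Take $L:=\Cch^*(\Ep,\Ep)$ with the shifted grading $L^k=\Cch^{k+1}(\Ep,\Ep)$. By Lemma~\ref{lem:FN-gLa}, $(L,[\cdot,\cdot]_{NR})$ is a graded Lie algebra. Let $\mathfrak{h}:=\bigoplus_{l}\{\widehat f:\|\widehat f\|=-1\mid l\}$, the lifts of $\Cch^*(\Ws,\Vs)$. By Lemma~\ref{lem:FN-zero}, $\mathfrak{h}$ is an abelian subalgebra. Let $P:L\to\mathfrak{h}$ be the projection onto the components of bidegree $-1\mid l$. Its kernel is the span of cochains of all other bidegrees, together with the non-bihomogeneous parts. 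To see that this kernel is a graded Lie subalgebra, use Lemma~\ref{lem:FN-bigrade}: bidegrees add under $[\cdot,\cdot]_{NR}$, so the bracket of two cochains of bidegree $k\mid l$ and $k'\mid l'$ with $k,k'\ge 0$ has first index $\ge 0$. I would check carefully that the remaining cochains, those not of bidegree type $-1\mid\ast$, close under the bracket; this is the standard argument of Jiang--Sheng--Tang, adapted verbatim via the conformal bigrading. Finally set $\Delta:=\str=\shf+\smu+\smv$. It has degree $1$, and $[\Delta,\Delta]_{NR}=0$ by \eqref{eq:quasi-MC}.

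\textbf{Applying Voronov's theorem.} The theorem then gives an $L_\infty$-structure on $\mathfrak{h}$ (suitably shifted) with brackets
\[
l_k(\widehat f_1,\dots,\widehat f_k)=P[\cdots[[\Delta,\widehat f_1]_{NR},\widehat f_2]_{NR},\dots,\widehat f_k]_{NR}.
\]
The zeroth bracket is $P\Delta=0$, since none of $\shf,\smu,\smv$ has bidegree $-1\mid\ast$. So the structure is uncurved.

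\textbf{Computing the brackets by bidegree.} Each $\widehat f$ has bidegree $-1\mid\ast$, so every bracket with $\widehat f$ lowers the first index by one. Starting from $\shf$ (first index $2$), $\smu$ (first index $1$) and $\smv$ (first index $0$), we land in $\mathfrak{h}$ after exactly $3$, $2$ and $1$ brackets, respectively.
\begin{itemize}
\item For $l_1$, only $\smv$ contributes: $l_1=[\smv,\widehat f]_{NR}$, with $P$ acting as the identity on it.
\item For $l_2$, only $\smu$ contributes. The term from $\smv$ vanishes, because it reaches first index $-2$, which is zero (Lemma~\ref{lem:FN-zero}).
\item For $l_3$, only $\shf$ contributes.
\item All $l_k$ with $k\ge4$ vanish.
\end{itemize}
Lemma~\ref{lem:FN-zero} is also what kills any bracket whose first index would go below $-1$.

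\textbf{Matching the paper's formulas.} The signs $(-1)^{m-1}$ and $(-1)^{n-1}$ in \eqref{eq:l2}--\eqref{eq:l3} come from passing between the shifted grading used in Voronov's theorem and the cochain degree on $\Cch^*(\Ws,\Vs)$, via the standard décalage. The formulas then agree with \eqref{eq:l1}--\eqref{eq:l3}.

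\textbf{Main obstacle.} The real content is the conformal version of the facts used above. First, that the lift $f\mapsto\widehat f$ is compatible with sesquilinearity and graded antisymmetry. Second, that Lemmas~\ref{lem:FN-bigrade} and \ref{lem:FN-zero} hold with the $\la^\ddag$ substitutions. I would verify Lemma~\ref{lem:FN-zero} directly: for $f,g$ both of type $-1\mid\ast$, each $f\star g$ term inserts a $\Vs$-valued output into a slot of $f$ that only accepts $\Ws$-inputs, so it vanishes.

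The sign bookkeeping is the other delicate point. Rather than recompute it, I would follow \cite[Corollary~3.5]{Uch1} and \cite{JST23}, which treat this exactly.
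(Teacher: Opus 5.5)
Your proposal is correct and follows the same route the paper relies on. The theorem comes from Voronov's higher derived brackets applied to the V-data $(\Cch^*(\Ep,\Ep),\Cch^*(\Ws,\Vs),\mathbf{P},\str)$, which is exactly the computation spelled out in the proof of Theorem~\ref{thm:control-II}: the bidegree count leaves only $\smv$, $\smu$, $\shf$ in $l_1$, $l_2$, $l_3$ and forces $l_k=0$ for $k\geq 4$; the paper also takes the signs from \cite[Corollary~3.5]{Uch1} without recomputing them, so your deferral of the d\'ecalage bookkeeping matches its level of detail.
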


\begin{corollary}\label{cor:dgla-twilled}
  For a twilled Lie conformal  algebra $\Vs\bowtie\Ws$
   in which $\shf=0$ and \eqref{eq:l3} vanishes,
  $(\Cch^*(\Ws,\Vs),\,\delta,\,\{\cdot,\cdot\})$ is a differential
  graded Lie algebra.
\end{corollary}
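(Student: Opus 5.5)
The corollary follows from Theorem~\ref{thm:Linf-quasi} by specializing the higher Jacobi identities to the case $\ell_3=0$. The plan is first to check that $\ell_3$ vanishes, then to see what the remaining identities become.

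In a twilled Lie conformal algebra $\Vs\bowtie\Ws$ we have $\psi_1=0$, so $\shf=0$ as a cochain by \eqref{eq:comp-psi1}. Hence $[\shf,\widehat f_1]_{NR}=0$, and the ternary bracket \eqref{eq:l3} vanishes identically. The twilled structure $\str=\smu+\smv$ is a special case of a quasi-twilled one, since $\psi_2=0$ as well. Theorem~\ref{thm:Linf-quasi} therefore applies and gives an $L_\infty$-algebra $(\Cch^*(\Ws,\Vs),\delta,\{\cdot,\cdot\},0)$, in which all $\ell_k$ with $k\ge 3$ are zero; the statement of the theorem only produces $\ell_1,\ell_2,\ell_3$.

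Next I write out the higher Jacobi identities of Definition~\ref{def:Linf} with $\ell_k=0$ for $k\geq3$, one $n$ at a time.
\begin{itemize}
\item For $n=1$ the identity is $\ell_1\circ\ell_1=0$, i.e.\ $\delta^2=0$.
\item For $n=2$ it says that $\ell_1$ is a graded derivation of $\ell_2$.
\item For $n=3$ the terms $\ell_1\ell_3$ and $\ell_3(\ell_1\otimes\cdots)$ drop out, leaving exactly the graded Jacobi identity for $\ell_2$.
\item For $n\ge4$ every term contains some $\ell_k$ with $k\geq3$, so those identities are vacuous.
\end{itemize}
Graded antisymmetry of $\ell_2$ is part of the $L_\infty$ axioms. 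Together these are precisely the axioms of a differential graded Lie algebra, with degrees set as in Theorem~\ref{thm:Linf-quasi}, i.e.\ $\Cch^{k}$ in degree $k$ or the appropriate shift.

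The only point needing care is sign and degree conventions. An $L_\infty$-algebra in the convention of Definition~\ref{def:Linf} with vanishing higher brackets is a dgLa on the same graded space, possibly after the standard sign twist $[x,y]=(-1)^{|x|}\ell_2(x,y)$ when one works on the shifted space. Either way the structure is a dgLa up to this harmless rescaling of signs, and the sign $(-1)^{m-1}$ already built into \eqref{eq:l2} is chosen so that no further twist is needed. As an independent check, $\delta^2=0$ can be seen directly: by the graded Jacobi identity for $[-,-]_{NR}$ (Lemma~\ref{lem:FN-gLa}), $\delta^2 f=\half[[\smv,\smv]_{NR},\widehat f]_{NR}$, and this is zero by \eqref{eq:quasi-MC}. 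The other identities can be checked in the same way using $[\smu,\smv]_{NR}=0$ and $\half[\smu,\smu]_{NR}=0$; the latter holds because $\shf=0$. I expect no genuine obstacle beyond this bookkeeping of signs.
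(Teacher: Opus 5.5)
Your proposal is correct and follows the paper's (implicit) argument: the paper states this corollary without a separate proof, as an immediate specialization of Theorem~\ref{thm:Linf-quasi}. There $\psi_1=0$ forces $\shf=0$, hence the ternary bracket vanishes, and an $L_\infty$-algebra in the convention of Definition~\ref{def:Linf} with $\ell_k=0$ for $k\geq 3$ is exactly a differential graded Lie algebra with $\Cch^k(\Ws,\Vs)$ in degree $k$. Your extra derived-bracket checks using $\half[\smv,\smv]_{NR}=0$, $[\smu,\smv]_{NR}=0$ and $[\smu,\smu]_{NR}=0$ are valid but not needed.
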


%=======================================================
%==========================================================================

%==========================================================================
\section{Controlling algebras and cohomologies of right deformation maps}
\label{sec:typeI}
%==========================================================================
 
In this section, we introduce the notion of right deformation map, which unify
 modified $r$-matrices,  crossed homomorphisms,
 derivations, and  Lie conformal algebra homomorphisms. The triple 
$(\Ep,\Vs,\Ws)$ is always a quasi-twilled Lie conformal algebra with structure $\str=\shf+\smu+\smv$, where the six components are
\[
  \{a_\la b\}_\Vs,\quad
  \rep_\Vs(a)_\la v,\quad
  \{u_\la v\}_\Ws,\quad
  \rep_\Ws(u)_\la a,\quad
  \psi_{\la}(a,b),
\]
for all $a,b\in\Vs$ and $u,v\in\Ws$ (and $\psi_2=0$).
 
%--------------------------------------------------------------------------
\subsection{Right deformation maps}
\label{subsec:DmapI}
%--------------------------------------------------------------------------

Let $D:\Vs\to\Ws$ be a $\Rg$-module homomorphism.  Its \textit{graph} is the $\Rg$-submodule
\[
  \mathrm{Gr}(D) := \{(a,D(a))\mid a\in\Vs\} \subset \Vs\oplus\Ws = \Ep.
\]

\begin{definition}\label{def:DmapI}
Let $(\Ep,\Vs,\Ws)$ be a quasi-twilled Lie conformal algebra.
A right deformation map of $(\Ep,\Vs,\Ws)$
is a $\Rg$-module homomorphism $D:\Vs\longrightarrow\Ws$
(i.e.\ a $\C$-linear map satisfying $D(\p a)=\p D(a)$ for all $a\in\Vs$)
such that $\mathrm{Gr}(D)$ is a  Lie conformal sub-algebra of $\Ep$.
\end{definition}

\begin{proposition}\label{prop:graph-DmapI}
A $\Rg$-module homomorphism $D:\Vs\to\Ws$ is a right deformation map  if and only if, for all $a,b\in\Vs$, we have
\begin{align}
&  D\!\bigl(\rep_\Ws(D(a))_\la b - \rep_\Ws(D(b))_{-\la-\p}a
           + \{a_\la b\}_\Vs\bigr) \nonumber \\
  =&  \{D(a)_\la D(b)\}_\Ws
    + \rep_\Vs(a)_\la D(b) - \rep_\Vs(b)_{-\la-\p}D(a)
    + \psi_{1,\la}(a,b). \label{eq:DmapI}
\end{align}

\end{proposition}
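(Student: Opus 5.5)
The plan is to compute the $\lambda$-bracket of two arbitrary elements of $\mathrm{Gr}(D)$ using the full bracket formula \eqref{eq:full-bracket} with $\psi_2=0$, and then to characterize when the result lies back in the graph.

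\textbf{Step 1: the bracket on the graph.} Take $(a,D(a))$ and $(b,D(b))$ with $a,b\in\Vs$. Substituting $u=D(a)$ and $v=D(b)$ into \eqref{eq:full-bracket} gives
\begin{align*}
\{(a,D(a))_\la(b,D(b))\}
=\bigl(&\{a_\la b\}_\Vs+\rep_\Ws(D(a))_\la b-\rep_\Ws(D(b))_{-\la-\p}a,\\
&\{D(a)_\la D(b)\}_\Ws+\rep_\Vs(a)_\la D(b)-\rep_\Vs(b)_{-\la-\p}D(a)+\psi_{1,\la}(a,b)\bigr).
\end{align*}
Both components are elements of $\Ep[\la]$, that is, polynomials in $\la$.

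\textbf{Step 2: the membership criterion.} Since $D$ is a $\Rg$-module homomorphism, $\mathrm{Gr}(D)$ is a $\Rg$-submodule of $\Ep$. Hence $\mathrm{Gr}(D)$ is a Lie conformal subalgebra exactly when every such bracket lies in $\mathrm{Gr}(D)[\la]=\C[\la]\otimes\mathrm{Gr}(D)$.

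An element $\sum_j \la^j (x_j,y_j)\in\Ep[\la]$ lies in $\mathrm{Gr}(D)[\la]$ if and only if $y_j=D(x_j)$ for every $j$. Extending $D$ $\C[\la]$-linearly, this is the single condition $\mathrm{pr}_\Ws=D\circ\mathrm{pr}_\Vs$ on the bracket.

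Applied to the bracket from Step 1, the condition reads verbatim as \eqref{eq:DmapI}. This proves both directions at once: if \eqref{eq:DmapI} holds, the graph is closed under the $\lambda$-bracket, and conversely.

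\textbf{Step 3: the only subtle point.} The one place needing care is the meaning of $D$ applied to terms containing $-\la-\p$, such as $\rep_\Ws(D(b))_{-\la-\p}a$. Here $\p$ acts on the output, and I will use that $D$ commutes with $\p$ to justify that $D(p(\la,\p)x)=p(\la,\p)D(x)$ for any polynomial $p$. Consequently the coefficientwise comparison of Step 2 is legitimate, and both sides of \eqref{eq:DmapI} are well defined in $\Ws[\la]$.

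No other obstacle is expected; the argument is a direct unpacking of the definitions.
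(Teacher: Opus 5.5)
Your proposal is correct and follows the same route as the paper: compute $\str_\la\bigl((a,D(a)),(b,D(b))\bigr)$ from \eqref{eq:full-bracket} and require that the $\Ws$-component equal $D$ applied to the $\Vs$-component, which is exactly \eqref{eq:DmapI}. Your extra remarks on the coefficientwise comparison in $\Ep[\la]$ and on $D$ commuting with $\p$ only make explicit what the paper's proof leaves implicit.
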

 
\begin{proof}
For $(a,D(a)),(b,D(b))\in\mathrm{Gr}(D)$, the $\la$-bracket of $\Ep$ gives
\begin{align*}
  \str_\la\!\bigl((a,D(a)),(b,D(b))\bigr)
  = \Bigl(
    &\{a_\la b\}_\Vs + \rep_\Ws(D(a))_\la b
       - \rep_\Ws(D(b))_{-\la-\p}a,\\
    &\{D(a)_\la D(b)\}_\Ws + \rep_\Vs(a)_\la D(b)
       - \rep_\Vs(b)_{-\la-\p}D(a) + \psi_{1,\la}(a,b)
  \Bigr).
\end{align*}
Then $D:\Vs\to\Ws$ is a right deformation map if and only if the $\Ws$-component equals
$D$ applied to the $\Vs$-component, which is precisely Eq. \eqref{eq:DmapI}.
\end{proof}

\begin{remark}\label{rmk:module-hom}
We require $D$ to be a $\Rg$-\emph{module} homomorphism, that is, a
$\C$-linear map satisfying $D(\p a)=\p D(a)$.  This condition is
strictly weaker than conformal linearity, which would demand a
$\la$-parametric family $D_\la$ with $D_\la(\p a)=(\p+\la)D_\la a$.
The module-homomorphism condition is the natural one for deformation
maps: it ensures that $D$ is compatible with the $\Rg$-module structure
of $\Vs$ and $\Ws$ without introducing a spectral-parameter shift, and
it is precisely the condition needed for the graph $\mathrm{Gr}(D)$ to
be a $\Rg$-submodule of $\Ep=\Vs\oplus\Ws$.
\end{remark}
 
\begin{remark}\label{rmk:DmapI-existence}
 Consider the $\phi$-deformed semi-direct product
$\Ep=\Vs\ltimes_\phi\Mod$ (with $\Ws=\Mod$, $\{u_\la v\}_\Ws=0$,
$\rep_\Ws=0$, $\psi_1=\phi\in\Cch^2(\Vs,\Mod)$ a $2$-cocycle).
A $\Rg$-module homomorphism $D:\Vs\to\Mod$ is a right deformation map\ if and only if
\[
  \phi_\la(a,b)
  = -\bigl(\rep(a)_\la D(b) - \rep(b)_{-\la-\p}D(a) - D(\{a_\la b\}_\Vs)\bigr)
  = \delta(-D)_\la(a,b)
\]
  for all \,$a,b\in\Vs$, where $\delta$ is the Chevalley-Eilenberg coboundary of $(\Vs,\{\cdot_\la\cdot\}_\Vs)$
with coefficients in $(\Mod;\rep)$.
Thus $\Ep\ltimes_\phi\Mod$ admits a right deformation map\ if and only if $\phi$ is an
exact $2$-cocycle.
\end{remark}

\begin{remark}
Proposition~\ref{prop:graph-DmapI} shows that finding a complement of
$\Ws$ in $\Ep$ that forms a matched pair with $\Ws$ is equivalent to
finding a right deformation map\ of $(\Ep,\Vs,\Ws)$.
\end{remark}
 
\begin{example}[Modified $r$-matrix]\label{ex:modified-r} Modified $r$-matrices play an important role in the study of
conformal analogues of Lax equations and factorization problems in Lie conformal  algebras, extending the classical picture of
Semenov-Tian-Shansky \cite{STS,STS2}. 
Let $\Ep=\Vs\oplus\Vs$ with $\{a_\la b\}_\Vs=\{u_\la v\}_\Ws$,
$\rep_\Vs=\rep_\Ws=\ad$, and
$\psi_{1,\la}(a,b)=-\{a_\la b\}_\Vs$.
A $\Rg$-module homomorphism $D:\Vs\to\Vs$ is a right deformation map\ if and only if
\begin{equation}\label{eq:conf-modified-r}
  \{D(a)_\la D(b)\}_\Vs 
  =D\!\bigl(\{D(a)_\la b\}_\Vs
  + \{a_\la D(b)\}_\Vs\bigr) -\la\{a_\la b\}_\Vs, \quad \forall\,a,b\in\Vs.
\end{equation}
This is the \textit{ modified Yang-Baxter equation} on
$(\Vs,\{\cdot_\la\cdot\}_\Vs)$; its solutions are called
\textit{ modified $r$-matrices}.
\end{example}
 
\begin{example}[Crossed homomorphism]\label{ex:crossed-hom}
Let $\Ep=\Vs\ltimes_\rep\Ws$ be the weighted semi-direct product and 
$\psi_1=0$.
A $\Rg$-module homomorphism $D:\Vs\to\Ws$ is a right deformation map\ if and only if
\begin{equation}\label{eq:conf-crossed-hom}
  D(\{a_\la b\}_\Vs)
  = \rep(a)_\la D(b) - \rep(b)_{-\la-\p}D(a)
  + \{D(a)_\la D(b)\}_\Ws, \quad \forall\,a,b\in\Vs,
\end{equation}
i.e.\ $D$ is a \textit{crossed homomorphism}
from $(\Vs,\{\cdot_\la\cdot\}_\Vs)$ to $(\Ws,\{\cdot_\la\cdot\}_\Ws)$.
\end{example}

\begin{example}[Derivation]\label{ex:conf-derivation}
Let $\Ep=\Vs\ltimes_\rep\Mod$ with $\{u_\la v\}_\Ws=0$, $\rep_\Ws=0$, $\psi_1=0$.
A $\Rg$-module homomorphism $D:\Vs\to\Mod$ is a right deformation map\ if and only if
\begin{equation}\label{eq:conf-derivation}
  D(\{a_\la b\}_\Vs)
  = \rep(a)_\la D(b) - \rep(b)_{-\la-\p}D(a),
  \quad \forall\,a,b\in\Vs,
\end{equation}
i.e.\ $D$ is a \textit{ derivation} from
$(\Vs,\{\cdot_\la\cdot\}_\Vs)$ to $(\Mod;\rep)$.  When $\rep=\ad$,
one  recovers the usual notion of a derivation on a Lie conformal algebra.
\end{example}
 
\begin{example}[Lie conformal  algebra morphism]\label{ex:conf-hom}
Let $\Ep=\Vs\oplus\Ws$ be the direct product with
$\rep_\Vs=\rep_\Ws=0$, $\psi_1=0$.
A $\Rg$-module homomorphism $D:\Vs\to\Ws$ is a right deformation map\ if and only if
\begin{equation}\label{eq:conf-Lie-hom}
  D(\{a_\la b\}_\Vs) = \{D(a)_\la D(b)\}_\Ws,
  \quad \forall\,a,b\in\Vs,
\end{equation}
i.e.\ $D$ is a \textit{ Lie algebra conformal morphism} from
$(\Vs,\{\cdot_\la\cdot\}_\Vs)$ to $(\Ws,\{\cdot_\la\cdot\}_\Ws)$.
\end{example}

%--------------------------------------------------------------------------
\subsection{Cohomology of right deformation maps}
\label{subsec:cohom-I}
%--------------------------------------------------------------------------

%======Induced structures===========
Let $(\Ep,\Vs,\Ws)$ be a quasi-twilled Lie conformal algebra and
$D:\Vs\to\Ws$ be a right deformation map. Define the map $\Bar{D}: \Ep \to \Ep,\ (a,u) \mapsto (0,D(a))$. It is  clear that $\Bar{D}^2=0$. 
Consider the map $I+\Bar{D}: \Ep \to \Ep,\ (a,u) \mapsto (a,u+D(a))$. It is an invertible $\mathbb{C}[\partial]$-homomorphism with inverse $I-\Bar{D}$. Then it induces a new Lie conformal algebraic structure on $\Ep$, given by 
\begin{align*}
\Omega^D_\lambda((a,u) ,(b,v))=(I-\Bar{D})\Omega_\lambda((I+\Bar{D})(a,u), (I+\Bar{D}(b,v)).     
\end{align*}

\begin{theorem}
Let $D: \Vs \to \Ws$ be a right deformation map on a quasi-twilled Lie conformal algebra $(\Ep,\Vs,\Ws)$. Then the product 
\begin{align}
 \{a_\la b\}^D_\Vs
  =&  \{a_\la b\}_\Vs + \rep_\Ws(D(a))_\la b - \rep_\Ws(D(b))_{-\la-\p}a  \label{eq:induced-bracket-I}   
\end{align}
defines  a Lie conformal algebra structure on $\Vs$. We denote it by $\Vs^D$. 
\end{theorem}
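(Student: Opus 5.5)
The cleanest route is to identify $\Vs^D$ with the graph $\mathrm{Gr}(D)$, which is a Lie conformal subalgebra of $\Ep$ by Definition~\ref{def:DmapI}. Consider the map
\[
  \Phi:\Vs\to\mathrm{Gr}(D),\qquad a\mapsto (a,D(a)),
\]
which is the restriction of $I+\Bar D$ to $\Vs\oplus 0$.

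\textbf{Step 1: $\Phi$ is an isomorphism of $\Rg$-modules.} Since $D(\p a)=\p D(a)$, the map $\Phi$ commutes with $\p$. It is injective, and surjective onto $\mathrm{Gr}(D)$ by definition of the graph. Its inverse is the restriction of $\mathrm{pr}_\Vs$.

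\textbf{Step 2: transport the bracket.} By the computation in the proof of Proposition~\ref{prop:graph-DmapI}, the $\Vs$-component of $\str_\la\bigl((a,D(a)),(b,D(b))\bigr)$ equals
\[
  \{a_\la b\}_\Vs+\rep_\Ws(D(a))_\la b-\rep_\Ws(D(b))_{-\la-\p}a=\{a_\la b\}^D_\Vs .
\]
Since $\mathrm{Gr}(D)$ is closed under the bracket, the whole bracket equals $\bigl(\{a_\la b\}^D_\Vs,\,D(\{a_\la b\}^D_\Vs)\bigr)$; this is exactly \eqref{eq:DmapI}. Hence
\[
  \Phi\bigl(\{a_\la b\}^D_\Vs\bigr)=\str_\la\bigl(\Phi(a),\Phi(b)\bigr),
\]
so $\{\cdot_\la\cdot\}^D_\Vs$ is the pullback of the $\la$-bracket of $\Ep$ restricted to $\mathrm{Gr}(D)$ along $\Phi$.

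\textbf{Step 3: the axioms transfer.} Sesquilinearity, skew-symmetry and the Jacobi identity hold on the subalgebra $\mathrm{Gr}(D)$. A $\Rg$-module isomorphism intertwining the brackets carries each of these identities over. Here one uses that $\Phi$ commutes with $\p$, which lets it pass through expressions such as $\{b_{-\la-\p}a\}$ and through the polynomial coefficients in $\la$ and $\mu$. Therefore $(\Vs,\{\cdot_\la\cdot\}^D_\Vs)$ is a Lie conformal algebra.

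\textbf{Alternative via $\str^D$.} One can also argue with the twisted structure $\str^D$ defined before the theorem. Since $I+\Bar D$ is an automorphism of $\Rg$-modules, $\str^D$ is a Lie conformal structure on $\Ep$. Its restriction to $\Vs\times\Vs$ has $\Vs$-component $\{a_\la b\}^D_\Vs$ and $\Ws$-component given by
\[
  -D\bigl(\{a_\la b\}^D_\Vs\bigr)+\{D(a)_\la D(b)\}_\Ws+\rep_\Vs(a)_\la D(b)-\rep_\Vs(b)_{-\la-\p}D(a)+\psi_{1,\la}(a,b),
\]
which vanishes precisely by \eqref{eq:DmapI}. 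So $\Vs$ is a subalgebra of $(\Ep,\str^D)$, giving the same conclusion.

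\textbf{Main obstacle.} The argument is essentially formal. The only real check is that the bracket $\{\cdot_\la\cdot\}^D_\Vs$ is well defined as a conformal bracket, namely that it satisfies sesquilinearity \eqref{eq:sesqui}. This follows because $D$ is a $\Rg$-module map and $\rep_\Ws$ satisfies the module sesquilinearity; for the term $\rep_\Ws(D(b))_{-\la-\p}a$ one uses the substitution identities \eqref{eq:sesqui-extra1}. The graph argument absorbs this check automatically, which is why I would present it as the main proof.
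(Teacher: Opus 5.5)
Your proposal is correct and is essentially the paper's argument. The paper computes $\str^D_\la((a,0),(b,0))=(I-\Bar{D})\,\str_\la((a,D(a)),(b,D(b)))$, i.e.\ it transports $\str$ along $I+\Bar{D}$ (whose restriction to $\Vs$ is your $\Phi$) and shows $\Vs$ is a subalgebra of $(\Ep,\str^D)$, which is your ``alternative'' route. Your version also makes explicit two points the paper leaves tacit: that the vanishing $\Ws$-component is exactly \eqref{eq:DmapI}, and that the Lie conformal axioms transfer because $I+\Bar{D}$ is a $\Rg$-module automorphism.
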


\begin{proof}
Let $a,b \in \Vs$. Then we have 
\begin{align*}
\Omega^D_\lambda((a,0) ,(b,0))=& (I-\Bar{D})\Omega_\lambda((I+\Bar{D})(a,0), (I+\Bar{D}(b,0))\\
=& (I-\Bar{D})\Omega_\lambda((a,D(a)),(b,D(b))) \\
=& (I-\Bar{D}) (
    \{a_\la b\}_\Vs + \rep_\Ws(D(a))_\la b
       - \rep_\Ws(D(b))_{-\la-\p}a,\\
    &\{D(a)_\la D(b)\}_\Ws + \rep_\Vs(a)_\la D(b)
       - \rep_\Vs(b)_{-\la-\p}D(a) + \psi_{1,\la}(a,b)) \\
=&( \{a_\la b\}_\Vs + \rep_\Ws(D(a))_\la b
       - \rep_\Ws(D(b))_{-\la-\p}a, 0) \\
=& (\{a_\la b\}^D_\Vs, 0).          
\end{align*}
Since $\Omega_\la$ is a Lie conformal bracket then $\{\cdot_\la \cdot\}^D_\Vs$ so is. Hence $(\Vs,\{\cdot_\la \cdot\}^D_\Vs)$ is a Lie conformal algebra. 
\end{proof}

\begin{remark}
The previous Theorem shows that $(\Ep,\Vs,\Ws,\Omega^D)$ is a quasi-twilled Lie conformal algebra, in which $\Vs$ is a subalgebra.      
\end{remark}

\begin{theorem}
Let $(\Ep,\Vs,\Ws)$ be a quasi-twilled Lie conformal algebra and $D: \Vs \to \Ws$ be a right deformation map. Define the map $\rep^D_\Vs: \Vs \to Cend(\Ws)$ by
\begin{align}
 \rep^D_\Vs(a)_\la u
  &= \rep_\Vs(a)_\la u + \{D(a)_\la u\}_\Ws + D(\rep_\Ws(u)_\la a). \label{eq:induced-rep-I}    
\end{align}
Then the pair $(\Ws, \rep^D_\Vs)$ is a representation of the Lie conformal algebra $\Vs^D$. 
\end{theorem}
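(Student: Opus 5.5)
The plan is to reuse the transported structure $\Omega^D=(I-\Bar{D})\,\Omega\,\bigl((I+\Bar{D})\otimes(I+\Bar{D})\bigr)$ from the proof of the previous theorem, rather than verifying the module axioms by brute force. Since $I+\Bar{D}$ is an invertible $\Rg$-module automorphism of $\Ep$ with inverse $I-\Bar{D}$, $\Omega^D$ is a Lie conformal bracket on $\Ep$. I will read off all four blocks of $\Omega^D$ with respect to $\Ep=\Vs\oplus\Ws$ and then project the Jacobi identity.

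First I compute the blocks.
\begin{itemize}
\item \emph{$\Vs\times\Vs$ block.} By the previous theorem, $\Omega^D_\la((a,0),(b,0))=(\{a_\la b\}^D_\Vs,0)$. So $\Vs$ is a subalgebra for $\Omega^D$, namely $\Vs^D$; this uses exactly the right deformation map equation \eqref{eq:DmapI}.
\item \emph{$\Ws\times\Ws$ block.} Since $\Bar{D}$ vanishes on $\Ws$ and $\Ws$ is a subalgebra of $\Ep$ (because $\psi_2=0$), we get $\Omega^D_\la((0,u),(0,v))=(0,\{u_\la v\}_\Ws)$. So $\Ws$ remains a subalgebra, and $(\Ep,\Omega^D)$ is in fact a twilled Lie conformal algebra $\Vs^D\bowtie\Ws$.
\item \emph{Mixed block.} Here $(I+\Bar{D})(a,0)=(a,D(a))$. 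Using \eqref{eq:full-bracket}, the bracket $\Omega_\la((a,D(a)),(0,u))$ has $\Vs$-component $-\rep_\Ws(u)_{-\la-\p}a$ and $\Ws$-component $\{D(a)_\la u\}_\Ws+\rep_\Vs(a)_\la u$. Applying $I-\Bar{D}$ subtracts $D$ of the $\Vs$-component from the $\Ws$-component.
\end{itemize}
Hence the $\Ws$-component of $\Omega^D_\la((a,0),(0,u))$ is
\[
\rep_\Vs(a)_\la u+\{D(a)_\la u\}_\Ws+D\bigl(\rep_\Ws(u)_{-\la-\p}a\bigr).
\]
This is the map $\rep^D_\Vs(a)_\la u$ of \eqref{eq:induced-rep-I}, up to the convention for the spectral parameter in the last term. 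I will have to reconcile that convention carefully, possibly by using skew-symmetry to rewrite $\rep_\Ws(u)_{-\la-\p}a$.

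With this identification, the module axioms become the standard matched-pair consequence of the Jacobi identity. Take the Jacobi identity of $\Omega^D$ for $(a,0),(b,0),(0,u)$ and project it onto $\Ws$. The bracket $\Omega^D$ of $\Vs$ with $\Ws$ has $\Ws$-part $\rep^D_\Vs$ and some $\Vs$-part. The bracket of two $\Vs$-elements lies in $\Vs$, because the $\psi$-component of $\Omega^D$ vanishes. Hence the $\Ws$-projection of the Jacobi identity yields
\[
\rep^D_\Vs(a)_\la\rep^D_\Vs(b)_\mu u-\rep^D_\Vs(b)_\mu\rep^D_\Vs(a)_\la u=\rep^D_\Vs(\{a_\la b\}^D_\Vs)_{\la+\mu}u .
\]
Equivalently, in the language of Proposition~\ref{prop:FN-module}, the bidegree $1\mid 0$ component of $\Omega^D$ squares to zero, which by \eqref{eq:quasi-MC} (twilled case) gives precisely this module condition. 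Conformal sesquilinearity, $\rep^D_\Vs(\p a)_\la=-\la\,\rep^D_\Vs(a)_\la$ and $\rep^D_\Vs(a)_\la\p u=(\p+\la)\rep^D_\Vs(a)_\la u$, follows termwise. It uses that $D$ commutes with $\p$ and the sesquilinearity of $\rep_\Vs$, $\{\cdot_\la\cdot\}_\Ws$ and $\rep_\Ws$, together with \eqref{eq:sesqui-extra1}.

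The main obstacle I expect is bookkeeping rather than conceptual. One must check that the cross terms landing in $\Ws$ genuinely cancel; for example, terms like $\{D(a)_\la D(b)\}$ and $D(\rep_\Ws(\cdot)\cdot)$ reappear and must be absorbed by \eqref{eq:DmapI}. The transport-of-structure argument handles this automatically. One must also match the spectral-parameter convention $\la$ versus $-\la-\p$ in the term $D(\rep_\Ws(u)_\la a)$ of \eqref{eq:induced-rep-I}. I would first compute the mixed block in both orders, $((a,0),(0,u))$ and $((0,u),(a,0))$, to pin down which form is consistent with skew-symmetry.
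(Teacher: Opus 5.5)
Your proposal follows the paper's proof. Both transport $\Omega$ along $I+\Bar{D}$, use that $\Vs$ becomes the subalgebra $\Vs^D$ for $\Omega^D$, and read off $\rep^D_\Vs(a)_\la u$ as the $\Ws$-component of $\Omega^D_\la((a,0),(0,u))$. Your explicit $\Ws$-projection of the Jacobi identity for $(a,0),(b,0),(0,u)$ supplies the step the paper only asserts.

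The spectral-parameter mismatch you flag cannot be removed by skew-symmetry, and you should not try. The paper's own computation also produces $D(\rep_\Ws(u)_{-\la-\p}a)$, and the $\la$ in the displayed formula is a misprint. Indeed, $D(\rep_\Ws(\p u)_\la a)=-\la\,D(\rep_\Ws(u)_\la a)$ rather than $(\p+\la)\,D(\rep_\Ws(u)_\la a)$. So the literal formula would not even give an element of $\mathrm{Cend}(\Ws)$ unless $D(\rep_\Ws(u)_\la a)$ vanishes identically. State and prove the theorem with $D(\rep_\Ws(u)_{-\la-\p}a)$, which is exactly what you computed.
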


\begin{proof}
We have already seen that  $(\Ep,\Vs,\Ws,\Omega^D)$ is a quasi-twilled Lie conformal algebra in which $\Vs$ is a  subalgebra  (and the induced Lie conformal algebra structure is $\Vs^D$).     Moreover, it follows that
the Lie conformal  algebra $\Vs^D$ has a representation on the $\mathbb{C}[\partial]$-module $\Vs$ with the action map given by
\begin{align*}
pr_{\Ws}(\Omega^D_\la((a,0),(0,u))=& pr_{\Ws}((I-\Bar{D})(\Omega_\la((a,D(a),(0,u))) \\
=& pr_{\Ws}((I-\Bar{D}) 
( - \rep_\Ws(u)_{-\lambda-\partial}a ,
          \{D(a)_\lambda u\}_\Ws + \rep_\Vs(a)_\lambda u )\\
          =&pr_{\Ws}(- \rep_\Ws(u)_{-\lambda-\partial}a, \{D(a)_\lambda u\}_\Ws + \rep_\Vs(a)_\lambda u + D(\rep_\Ws(u)_{-\lambda-\partial}a) \\
 =& \{D(a)_\lambda u\}_\Ws + \rep_\Vs(a)_\lambda u + D(\rep_\Ws(u)_{-\lambda-\partial}a).          
\end{align*}
This completes the proof. 
\end{proof}

Let $\delta^D:\Cch^k(\Vs,\Ws)\to\Cch^{k+1}(\Vs,\Ws)$ be the
Chevalley-Eilenberg coboundary of $(\Vs,\{\cdot_\la\cdot\}^D_\Vs)$
with coefficients in $(\Ws;\rep^D_\Vs)$.  For
$f\in\Cch^k(\Vs,\Ws)$ and $a_1,\ldots,a_{k+1}\in\Vs$:
%\begin{equation}\label{eq:CE-D-I}
%\begin{split}
  %(\delta^D f)_{\la_1,\ldots,\la_k}(a_1,\ldots,a_{k+1})
  %&= \sum_{i=1}^{k+1}(-1)^{i+1}
    % \rep^D_\Vs(a_i)_{\la_i}
    % f_{\widehat{\la_i}}(\ldots)\big|_{\la_{k+1}^\ddag}\\
  %&\quad + \sum_{i<j}(-1)^{i+j}
     %f_{\la_i+\la_j,\widehat{\la_i},\widehat{\la_j}}
     %(\{a_i{}_{\la_i}a_j\}^D_\Vs,\ldots)\big|_{\la_{k+1}^\ddag}.
%\end{split}
%\end{equation}
\begin{equation}\label{eq:CE-D-I}
\begin{split}
  (\delta^D f)_{\la_1,\ldots,\la_k}
  &(a_1,\ldots,a_{k+1}) \\
  &= \sum_{i=1}^{k}(-1)^{i+1}
     \rep^D_\Vs(a_i)_{\la_i}
     f_{\la_1,\ldots,\widehat{\la_i},\ldots \la_k}(a_1,\ldots,\widehat{a_i},\ldots,a_{k+1})
     %\big|_{\la_{k+1}\mapsto\la_{k+1}^\ddag} 
     \\
  &\quad
    + \sum_{i=1}^{k}(-1)^i
    f_{\la_1,\ldots,\widehat{\la_i},\ldots \la_k}
      (a_1,\ldots,\widehat{a_i},\ldots,a_k,
       \{a_{i\,\la_i} a_{k+1}\}^D_\Vs)
      %\big|_{\la_{k+1}\mapsto\la_{k+1}^\ddag} 
      \\
  &\quad
    + \!\!\sum_{1\leq i<j\leq k+1}\!\!(-1)^{k+i+j+1}
      f_{\la_1,\ldots,\widehat{\la_i},\ldots ,\widehat{\la_j},\ldots, \la_k,\la_{k+1}^\ddag}
      (a_1,\ldots,a_k,\{a_{i\,\la_i}a_j\}^D_\Vs)
      %\big|_{\la_{k+1}\mapsto\la_{k+1}^\ddag} 
      \\
  &\quad
    + (-1)^k\rep^D_\Vs(a_{k+1})_{\la_{k+1}^\ddag}
      f_{\la_1,\ldots,\la_{k-1}}(a_1,\ldots,a_k),
\end{split}
\end{equation}
Expanding via \eqref{eq:induced-bracket-I}-\eqref{eq:induced-rep-I}
gives, for all $a_1,\ldots,a_{k+1}\in\Vs$:
\begin{equation*}
\begin{split}
  &(\delta^D f)_{\la_1,\ldots,\la_k}
  (a_1,\ldots,a_{k+1}) \\
  =& \sum_{i=1}^{k}(-1)^{i+1}
     \rep_\Vs(a_i)_{\la_i}
     f_{\la_1,\ldots,\widehat{\la_i},\ldots \la_k}(a_1,\ldots,\widehat{a_i},\ldots,a_{k+1})
     %\big|_{\la_{k+1}\mapsto\la_{k+1}^\ddag} 
     \\
  &+\sum_{i=1}^{k}(-1)^{i+1}
     \{D(a_i)_{\la_i}
     f_{\la_1,\ldots,\widehat{\la_i},\ldots \la_k}(a_1,\ldots,\widehat{a_i},\ldots,a_{k+1})\}_\Ws
     %\big|_{\la_{k+1}\mapsto\la_{k+1}^\ddag} 
     \\
  &+\sum_{i=1}^{k}(-1)^{i+1}
D\rep_\Ws(f_{\la_1,\ldots,\widehat{\la_i},\ldots \la_k}(a_1,\ldots,\widehat{a_i},\ldots,a_{k+1}))_{\la_i}(a_i)     
     %\big|_{\la_{k+1}\mapsto\la_{k+1}^\ddag} 
     \\
  &
    + \sum_{i=1}^{k}(-1)^i
    f_{\la_1,\ldots,\widehat{\la_i},\ldots \la_k}
      (a_1,\ldots,\widehat{a_i},\ldots,a_k,
       \{a_{i\,\la_i} a_{k+1}\}_\Vs)
      %\big|_{\la_{k+1}\mapsto\la_{k+1}^\ddag} 
      \\&
    + \sum_{i=1}^{k}(-1)^i
    f_{\la_1,\ldots,\widehat{\la_i},\ldots \la_k}
      (a_1,\ldots,\widehat{a_i},\ldots,a_k,
       \rep_\Ws(D(a_{i})_{\la_i} a_{k+1}-\rep_\Ws(D(a_{k+1})_{-\la_i-\partial} a_{i})
      %\big|_{\la_{k+1}\mapsto\la_{k+1}^\ddag} 
      \\
  &
    + \!\!\sum_{1\leq i<j\leq k+1}\!\!(-1)^{k+i+j+1}
      f_{\la_1,\ldots,\widehat{\la_i},\ldots ,\widehat{\la_j},\ldots, \la_k,\la_{k+1}^\ddag}
      (a_1,\ldots,a_k,\{a_{i\,\la_i}a_j\}_\Vs)
      %\big|_{\la_{k+1}\mapsto\la_{k+1}^\ddag} 
      \\ &
    + \!\!\sum_{1\leq i<j\leq k+1}\!\!(-1)^{k+i+j+1}
      f_{\la_1,\ldots,\widehat{\la_i},\ldots ,\widehat{\la_j},\ldots, \la_k,\la_{k+1}^\ddag}
      (a_1,\ldots,a_k,\rep_\Ws(D(a_{i})_{\la_i} a_{j}-\rep_\Ws(D(a_{j})_{-\la_i-\partial} a_{i})
      %\big|_{\la_{k+1}\mapsto\la_{k+1}^\ddag} 
      \\
      &+(-1)^{k}
     \rep_\Vs(a_{k+1})_{\la_{k+1}}
     f_{\la_1,\ldots \la_k}(a_1,\ldots,a_{k+1})
     %\big|_{\la_{k+1}\mapsto\la_{k+1}^\ddag} 
     +(-1)^{k}
     \{D(a_{k+1})_{\la_{k+1}}
     f_{\la_1,\ldots \la_k}(a_1,\ldots,a_{k})\}_\Ws
     %\big|_{\la_{k+1}\mapsto\la_{k+1}^\ddag} 
     \\
  &+(-1)^{k}
D\rep_\Ws(f_{\la_1,\ldots \la_k}(a_1,\ldots,\widehat{a_i},\ldots,a_{k}))_{\la_{k+1}}(a_i),
\end{split}
\end{equation*}

\begin{definition}\label{def:cohom-I}
Let $D:\Vs\to\Ws$ be a right deformation map\ of $(\Ep,\Vs,\Ws)$.  Set $C^0(D):=0$,
$C^1(D):=\Ws$, and $C^n(D):=\Cch^{n-1}(\Vs,\Ws)$ for $n\geq2$.
The \textit{cohomology of the right deformation map\ $D$} is the cohomology of the
cochain complex $\bigl(\bigoplus_{i\geq0}C^i(D),\,\delta^D\bigr)$, with
cohomology groups $H^n(D)$ for $n\geq0$.
\end{definition}
 
An element $u\in\Ws=C^1(D)$ is a $1$-cocycle if and only if
\[
  \{D(a)_\la u\}_\Ws + \rep_\Vs(a)_\la u + D(\rep_\Ws(u)_\la a)
  = 0, \quad \forall\,a\in\Vs,
\]
and $f\in\Cch^1(\Vs,\Ws)=C^2(D)$ is a $2$-cocycle if and only if
\begin{align*}
  &\{D(a)_\la f(b)\}_\Ws - \{D(b)_{-\la-\p}f(a)\}_\Ws
  + \rep_\Vs(a)_\la f(b) - \rep_\Vs(b)_{-\la-\p}f(a)\\
  &\quad - D(\rep_\Ws(f(a))_\la b) + D(\rep_\Ws(f(b))_{-\la-\p}a)
  = f(\{a_\la b\}_\Vs)
   + f(\rep_\Ws(D(a))_\la b - \rep_\Ws(D(b))_{-\la-\p}a),
\end{align*}
for all $a,b\in\Vs$.
 
%\begin{proposition}\label{prop:l1D-CE-I}
%For any $f\in\Cch^k(\Vs,\Ws)$, one has
%$l_1^D(f) = (-1)^{k-1}\delta^D f.$
%\end{proposition}
 
%\begin{proof}
%By \eqref{eq:l1D-I} and Theorem~\ref{thm:control-I},
%$l_1^D(f) = [\smu,\widehat{f}]_{NR} + [[\smv,\widehat{D}]_{NR},\widehat{f}]_{NR}
%= [\smu^D,\widehat{f}]_{NR} = (-1)^{k-1}\delta^D f$.
%\end{proof}
 
We now illustrate that Definition~\ref{def:cohom-I} unifies the
cohomologies of the classical conformal operators.
 
\begin{example}[Cohomology of a  modified $r$-matrix]\label{ex:Coh modified-r}
In the setting of Example~\ref{ex:modified-r}, let $D:\Vs\to\Vs$ be
a modified $r$-matrix.  Then $(\Vs,\{\cdot_\la\cdot\}^D_\Vs)$
is a Lie conformal  algebra with
$\{a_\la b\}^D_\Vs = \{a_\la D(b)\}_\Vs - \{b_{-\la-\p}D(a)\}_\Vs$,
and the module action is
$\rep^D_\Vs(a)_\la b = \{D(a)_\la b\}_\Vs - D(\{b_\la a\}_\Vs)$.
The cohomology of Definition~\ref{def:cohom-I} applied to these structures  is the
\textit{cohomology for a  modified $r$-matrix} \cite{JS2}. The coboundary operators are defined as 

\begin{equation}\label{Coh: modified r-matrix}
\begin{split}
  &(\delta^D f)_{\la_1,\ldots,\la_k}
  (a_1,\ldots,a_{k+1}) \\
  &=\sum_{i=1}^{k}(-1)^{i+1}
     \{D(a_i)_{\la_i}
     f_{\la_1,\ldots,\widehat{\la_i},\ldots \la_k}(a_1,\ldots,\widehat{a_i},\ldots,a_{k+1})\}_\Vs
     %\big|_{\la_{k+1}\mapsto\la_{k+1}^\ddag} 
     \\
  &+\sum_{i=1}^{k}(-1)^{i+1}
D(\{a_{i\,\la_i}f_{\la_1,\ldots,\widehat{\la_i},\ldots \la_k}(a_1,\ldots,\widehat{a_i},\ldots,a_{k+1})\}_\Vs)     
     %\big|_{\la_{k+1}\mapsto\la_{k+1}^\ddag} 
     \\
  &
    + \sum_{i=1}^{k}(-1)^i
    f_{\la_1,\ldots,\widehat{\la_i},\ldots \la_k}
      (a_1,\ldots,\widehat{a_i},\ldots,a_k,
       \{D(a_{i})_{\la_i} a_{k+1}\}_\Vs-\{D(a_{k+1})_{-\la_i-\partial} a_{i}\}_\Vs)
      %\big|_{\la_{k+1}\mapsto\la_{k+1}^\ddag} 
      \\
  &
    + \!\!\sum_{1\leq i<j\leq k+1}\!\!(-1)^{k+i+j+1}
      f_{\la_1,\ldots,\widehat{\la_i},\ldots ,\widehat{\la_j},\ldots, \la_k,\la_{k+1}^\ddag}
      (a_1,\ldots,a_k,\{D(a_{i})_{\la_i} a_{j}\}_\Vs-\{D(a_{j})_{-\la_i-\partial} a_{i}\}_\Vs)
      %\big|_{\la_{k+1}\mapsto\la_{k+1}^\ddag} 
      \\
      &
     +(-1)^{k}\big(
     \{D(a_{k+1})_{\la_{k+1}}
     f_{\la_1,\ldots \la_k}(a_1,\ldots,a_{k})\}_\Vs
     %\big|_{\la_{k+1}\mapsto\la_{k+1}^\ddag} 
     +
D(\{f_{\la_1,\ldots \la_k}(a_1,\ldots,\widehat{a_i},\ldots,a_{k})_{\la_{k+1}}a_i\}_\Vs)\big).
\end{split}
\end{equation}

\end{example}
 
\begin{example}[Cohomology of a  crossed homomorphism]\label{ex:coh crossed-hom}
In the setting of Example~\ref{ex:crossed-hom}, let $D:\Vs\to\Ws$ be
a conformal crossed homomorphism.  The Lie conformal 
algebra $(\Vs,\{\cdot_\la\cdot\}_\Vs)$ acts on $\Ws$ via
$\rep^D_\Vs(a)_\la u = \rep(a)_\la u + \{D(a)_\la u\}_\Ws$.
 The coboundary operators are defined as 
\begin{equation*}\label{Coh: crossed homomorphism}
\begin{split}
  &(\delta^D f)_{\la_1,\ldots,\la_k}
  (a_1,\ldots,a_{k+1}) \\
  =& \sum_{i=1}^{k}(-1)^{i+1}
     \rep_\Vs(a_i)_{\la_i}
     f_{\la_1,\ldots,\widehat{\la_i},\ldots \la_k}(a_1,\ldots,\widehat{a_i},\ldots,a_{k+1})
     %\big|_{\la_{k+1}\mapsto\la_{k+1}^\ddag} 
     \\
  &+\sum_{i=1}^{k}(-1)^{i+1}
D\rep_\Ws(f_{\la_1,\ldots,\widehat{\la_i},\ldots \la_k}(a_1,\ldots,\widehat{a_i},\ldots,a_{k+1}))_{\la_i}(a_i)     
     %\big|_{\la_{k+1}\mapsto\la_{k+1}^\ddag} 
     \\
  &
    + \sum_{i=1}^{k}(-1)^i
    f_{\la_1,\ldots,\widehat{\la_i},\ldots \la_k}
      (a_1,\ldots,\widehat{a_i},\ldots,a_k,
       \{a_{i\,\la_i} a_{k+1}\}_\Vs)
      %\big|_{\la_{k+1}\mapsto\la_{k+1}^\ddag} 
      \\ 
      &+(-1)^{k}
     \rep_\Vs(a_{k+1})_{\la_{k+1}}
     f_{\la_1,\ldots \la_k}(a_1,\ldots,a_{k+1})
     %\big|_{\la_{k+1}\mapsto\la_{k+1}^\ddag} 
     +(-1)^{k}
     \{D(a_{k+1})_{\la_{k+1}}
     f_{\la_1,\ldots \la_k}(a_1,\ldots,a_{k})\}_\Ws
 .
\end{split}
\end{equation*}
Notice that this case was considered in \cite{BBSS}.
\end{example}
 
\begin{example}[Cohomology of a  derivation]\label{ex:coh conf-derivation}
In the setting of Example~\ref{ex:conf-derivation}, the cohomology associated with a right deformation map
$D:\Vs\to\Mod$ is given by the Chevalley--Eilenberg cohomology of
$(\Vs,\{\cdot,\cdot\}_\Vs)$ with coefficients in the representation
$(\Mod;\rep)$. In the classical case, the corresponding coboundary
operators are explicitly described in \cite{TFS}. 

%\begin{equation}\label{Coh: derivation}
%\begin{split}
  %&(\delta^D f)_{\la_1,\ldots,\la_k}
  %(a_1,\ldots,a_{k+1}) \\
 % =& \sum_{i=1}^{k}(-1)^{i+1}
   %  \rep_\Vs(a_i)_{\la_i}
    % f_{\la_1,\ldots,\widehat{\la_i},\ldots \la_k}(a_1,\ldots,\widehat{a_i},\ldots,a_{k+1}) 
     %\\
 % &+\sum_{i=1}^{k}(-1)^{i+1}
     %\{D(a_i)_{\la_i}
     %f_{\la_1,\ldots,\widehat{\la_i},\ldots \la_k}%(a_1,\ldots,\widehat{a_i},\ldots,a_{k+1})\}_\Ws
     %\\
  %&+\sum_{i=1}^{k}(-1)^{i+1}
%D\rep_\Ws(f_{\la_1,\ldots,\widehat{\la_i},\ldots \la_k}(a_1,\ldots,\widehat{a_i},\ldots,a_{k+1}))_{\la_i}(a_i)      
     %\\
  %&
    %+ \sum_{i=1}^{k}(-1)^i
    %f_{\la_1,\ldots,\widehat{\la_i},\ldots \la_k}
     % (a_1,\ldots,\widehat{a_i},\ldots,a_k,
       %\{a_{i\,\la_i} a_{k+1}\}_\Vs) 
      %\\&
    %+ \sum_{i=1}^{k}(-1)^i
    %f_{\la_1,\ldots,\widehat{\la_i},\ldots \la_k}
      %(a_1,\ldots,\widehat{a_i},\ldots,a_k,
       %\rep_\Ws(D(a_{i})_{\la_i} a_{k+1}-\rep_\Ws(D(a_{k+1})_{-\la_i-\partial} a_{i})
      %\\
  %&
   % + \!\!\sum_{1\leq i<j\leq k+1}\!\!(-1)^{k+i+j+1}
      %f_{\la_1,\ldots,\widehat{\la_i},\ldots ,\widehat{\la_j},\ldots, \la_k,\la_{k+1}^\ddag}
      %(a_1,\ldots,a_k,\{a_{i\,\la_i}a_j\}_\Vs) 
      %\\ &
    %+ \!\!\sum_{1\leq i<j\leq k+1}\!\!(-1)^{k+i+j+1}
      %f_{\la_1,\ldots,\widehat{\la_i},\ldots ,\widehat{\la_j},\ldots, \la_k,\la_{k+1}^\ddag}
      %(a_1,\ldots,a_k,\rep_\Ws(D(a_{i})_{\la_i} a_{j}-\rep_\Ws(D(a_{j})_{-\la_i-\partial} a_{i}) 
      %\\
      %&+(-1)^{k}
     %\rep_\Vs(a_{k+1})_{\la_{k+1}}
     %f_{\la_1,\ldots \la_k}(a_1,\ldots,a_{k+1})
     %+(-1)^{k}
     %\{D(a_{k+1})_{\la_{k+1}}
     %f_{\la_1,\ldots \la_k}(a_1,\ldots,a_{k})\}_\Ws 
     %\\
  %&+(-1)^{k}
%D\rep_\Ws(f_{\la_1,\ldots \la_k}(a_1,\ldots,\widehat{a_i},\ldots,a_{k}))_{\la_{k+1}}(a_i),
%\end{split}
%\end{equation}

\end{example}
 
\begin{example}[Cohomology of a Lie conformal  algebra homomorphism]\label{ex:coh conf-hom}
In the setting of Example~\ref{ex:conf-hom}, $\{a_\la b\}^D_\Vs
= \{a_\la b\}_\Vs$ and $\rep^D_\Vs(a)_\la u = \{D(a)_\la u\}_\Ws$.
The cohomology given in Definition~\ref{def:cohom-I} with these special structures  is the
\textit{cohomology for a Lie conformal  algebra homomorphism}.  The coboundary operators, given explicitly, are defined as 

\begin{equation}\label{Coh: homomorphism}
\begin{split}
  &(\delta^D f)_{\la_1,\ldots,\la_k}
  (a_1,\ldots,a_{k+1}) \\
  =& \sum_{i=1}^{k}(-1)^{i+1}
     \{D(a_i)_{\la_i}
     f_{\la_1,\ldots,\widehat{\la_i},\ldots \la_k}(a_1,\ldots,\widehat{a_i},\ldots,a_{k+1})\}_\Ws
     %\big|_{\la_{k+1}\mapsto\la_{k+1}^\ddag} 
     \\
  &
    + \sum_{i=1}^{k}(-1)^i
    f_{\la_1,\ldots,\widehat{\la_i},\ldots \la_k}
      (a_1,\ldots,\widehat{a_i},\ldots,a_k,
       \{a_{i\,\la_i} a_{k+1}\}_\Vs) 
      \\
  &
    + \!\!\sum_{1\leq i<j\leq k+1}\!\!(-1)^{k+i+j+1}
      f_{\la_1,\ldots,\widehat{\la_i},\ldots ,\widehat{\la_j},\ldots, \la_k,\la_{k+1}^\ddag}
      (a_1,\ldots,a_k,\{a_{i\,\la_i}a_j\}_\Vs)
      \\
      &
     +(-1)^{k}
     \{D(a_{k+1})_{\la_{k+1}}
     f_{\la_1,\ldots \la_k}(a_1,\ldots,a_{k})\}_\Ws
     .
\end{split}
\end{equation}
So far, this cohomology is new.
\end{example}

%--------------------------------------------------------------------------
\subsection{The controlling curved $L_\infty$-algebra of right deformation maps}
\label{subsec:control-I}
%--------------------------------------------------------------------------
 
We recall Voronov's derived bracket construction \cite{Vo}.
 
\begin{definition}[Curved $V$-data, {\cite{Vo}}]\label{def:cVdata-I}
A \textit{curved $V$-data} is a quadruple $(L,F,\mathbf{P},\Delta)$,
where $(L,[\cdot,\cdot])$ is a graded Lie algebra, $F$ is an abelian
graded Lie sub-algebra, $\mathbf{P}:L\to L$ is a projection with
$\mathrm{Im}(\mathbf{P})=F$ and $\ker(\mathbf{P})$ is a graded Lie
sub-algebra, and $\Delta\in L^1$ with $[\Delta,\Delta]=0$.  When
moreover $\Delta\in\ker(\mathbf{P})^1$, it is a \textit{$V$-data}.
\end{definition}
 
\begin{theorem}[{\cite{Vo}}]\label{thm:Voronov}
Let $(L,F,\mathbf{P},\Delta)$ be a curved $V$-data.  Then
$(F,\{l_k\}_{k\geq0})$ is a curved $L_\infty$-algebra with
\[
  l_0 = \mathbf{P}(\Delta), \qquad
  l_k(f_1,\ldots,f_k)
  = \mathbf{P}([\cdots[[\Delta,f_1]_{NR},f_2]_{NR},\cdots,f_k]_{NR}),
  \quad k\geq1.
\]
\end{theorem}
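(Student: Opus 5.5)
The plan is the standard derived-bracket argument. First, each $l_k$ is graded symmetric (in the shifted grading) on $F$; then all the higher Jacobi identities are packaged into one generating identity that follows from $[\Delta,\Delta]=0$. Throughout, $[\cdot,\cdot]_{NR}$ in the formula is read as the bracket of $L$, and $l_k$ is regarded as an operation of degree $1$ on $F$ with the grading of $L$. This is the convention in which the Maurer--Cartan elements of interest sit in $F^0$.

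\emph{Step 1: symmetry and degrees.} Since $\Delta\in L^1$, $l_k$ has degree $1$. For symmetry under adjacent transpositions, apply the graded Jacobi identity of $L$ to the innermost pair:
\[
[[X,f_i],f_{i+1}]-(-1)^{|f_i||f_{i+1}|}[[X,f_{i+1}],f_i]=[X,[f_i,f_{i+1}]],
\]
with $X=[\cdots[\Delta,f_1],\cdots,f_{i-1}]$. The right-hand side vanishes because $F$ is abelian. Hence the nested bracket, and so $l_k$, is graded symmetric in $f_1,\dots,f_k$.

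\emph{Step 2: generating function.} By polarization, it suffices to check the higher Jacobi identities on "diagonal" inputs. To do this, tensor $L$ with an auxiliary free graded commutative algebra $A$ (formal parameters of all parities, one attached to each $f_i$), and replace $F$ by $F\otimes A$. Set $a=\sum_i t_if_i$, taken of degree $0$ and nilpotent. Define
\[
\Delta_a:=e^{\mathrm{ad}_a}\Delta=\sum_{k\ge0}\tfrac1{k!}[\cdots[\Delta,a],\cdots,a],
\]
with $\mathrm{ad}_a=[\,\cdot\,,a]$. Then $\mathbf P(\Delta_a)=\sum_k\frac1{k!}l_k(a,\dots,a)$. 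More generally, for $b\in F$ one has $\mathbf P[\Delta_a,b]=\sum_k\frac1{k!}l_{k+1}(b,a,\dots,a)$, using Step 1 to move $b$ to the front. So the full family of (curved) Jacobi identities evaluated at $a$ is encoded in
\[
\sum_{i,j}\tfrac{1}{i!\,j!}\,l_{j+1}\bigl(l_i(a,\dots,a),a,\dots,a\bigr)=\mathbf P[\Delta_a,\mathbf P\Delta_a],
\]
where the term $i=0$ accounts for $l_0=\mathbf P\Delta$. Extracting coefficients of the monomials in the $t_i$ recovers each identity with its Koszul signs.

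\emph{Step 3: vanishing.} Since $e^{\mathrm{ad}_a}$ is a Lie automorphism,
\[
[\Delta_a,\Delta_a]=e^{\mathrm{ad}_a}[\Delta,\Delta]=0.
\]
Decompose $\Delta_a=\mathbf P\Delta_a+(1-\mathbf P)\Delta_a$. Then $[\mathbf P\Delta_a,\mathbf P\Delta_a]=0$ because $F$ is abelian. Also $\mathbf P[(1-\mathbf P)\Delta_a,(1-\mathbf P)\Delta_a]=0$ because $\ker\mathbf P$ is a subalgebra. Applying $\mathbf P$ to $[\Delta_a,\Delta_a]=0$ therefore gives $2\mathbf P[\mathbf P\Delta_a,(1-\mathbf P)\Delta_a]=0$, using that $\Delta_a$ has odd degree so the bracket is symmetric. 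Finally,
\[
\mathbf P[\Delta_a,\mathbf P\Delta_a]=\mathbf P[(1-\mathbf P)\Delta_a,\mathbf P\Delta_a]=0,
\]
which is the required identity.

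I expect the main obstacle to be Step 2. The dynamics of the argument are trivial, but matching the coefficients of the generating identity with the unshuffle sums and Koszul signs $\chi(\sigma)$ of Definition~\ref{def:Linf} needs care. This includes the sign conventions of the décalage between the grading of $L$ and the $L_\infty$ grading, and the factor conventions for $l_0$. I would handle it by working with the symmetric (shifted) form of the $L_\infty$ axioms, where the generating-function identity holds verbatim, and only at the end translating back to the skew-symmetric form via the usual décalage isomorphism.
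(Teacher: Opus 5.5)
Your argument is correct and is essentially Voronov's original proof, which the paper does not reproduce but only cites. You encode all brackets in the generating function $\mathbf P e^{\mathrm{ad}_a}\Delta$ and use the splitting $\Delta_a=\mathbf P\Delta_a+(1-\mathbf P)\Delta_a$ to identify the Jacobiator generating function $\mathbf P[\Delta_a,\mathbf P\Delta_a]$ with $\tfrac12\mathbf P\,e^{\mathrm{ad}_a}[\Delta,\Delta]$, which vanishes. Your care about the d\'ecalage is also warranted: the brackets as stated are graded symmetric of degree $1$ (the shifted convention), not the skew-symmetric convention of Definition~\ref{def:Linf}.
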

 
\begin{theorem}\label{thm:control-I}
Let $(\Ep,\Vs,\Ws)$ be a quasi-twilled Lie conformal  algebra with
$\str=\shf+\smu+\smv$.  Set
\[
  L := \bigoplus_{n\geq0}\Cch^{n+1}(\Ep,\Ep), \qquad
  F := \bigoplus_{n\geq0}\Cch^{n+1}(\Vs,\Ws),
\]
let $\mathbf{P}:L\to L$ be the projection onto $F$, and set
$\Delta:=\str$.  Then $(L,F,\mathbf{P},\Delta)$ is a curved $V$-data
and the resulting curved $L_\infty$-algebra on $F$ is
\[
  \Bigl(\bigoplus_{n\geq0}\Cch^{n+1}(\Vs,\Ws),\; l_0,\, l_1,\, l_2\Bigr),
\]
with
\begin{align}
  l_0 &= \shf, \label{eq:l0-I}\\
  l_1(f) &= [\smu,\,\widehat{f}\,]_{NR}, \label{eq:l1-I}\\
  l_2(f,g) &= [[\smv,\,\widehat{f}\,]_{NR},\,\widehat{g}\,]_{NR},
  \label{eq:l2-I}
\end{align}
for all $f\in\Cch^m(\Vs,\Ws)$, $g\in\Cch^n(\Vs,\Ws)$, and $l_k=0$ for
$k\geq3$.
 
A $\Rg$-module homomorphism $D:\Vs\to\Ws$ is a right deformation map\ of
$(\Ep,\Vs,\Ws)$ if and only if $D$ is a Maurer-Cartan element of this
curved $L_\infty$-algebra:
\begin{equation}\label{eq:MC-I}
  l_0 + l_1(D) + \tfrac{1}{2}l_2(D,D) = 0.
\end{equation}
\end{theorem}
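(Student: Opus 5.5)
We verify the curved $V$-data axioms, then compute the brackets with the bigrading lemmas, and finally read off the Maurer--Cartan equation.

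\emph{The $V$-data axioms.} Lemma~\ref{lem:FN-gLa} makes $L$ a graded Lie algebra under $[\cdot,\cdot]_{NR}$. It also gives $[\Delta,\Delta]_{NR}=0$, since $\str$ is a Lie conformal structure. We identify $F=\bigoplus_{n\ge0}\Cch^{n+1}(\Vs,\Ws)$ with the span of the lifts $\widehat f$. Each such lift has bidegree $-1\mid n+1$ in the reversed convention, i.e.\ with inputs only from $\Vs$ and output in $\Ws$. By the $\Ws$-valued analogue of Lemma~\ref{lem:FN-zero}, $F$ is abelian: composing two cochains that eat only $\Vs$ and produce $\Ws$ gives zero.

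\emph{The complement $\ker\mathbf{P}$.} We take $\mathbf{P}$ to be the projection onto the component of cochains vanishing off $\Vs^{\otimes}$ and valued in $\Ws$. Then $\ker\mathbf{P}$ consists of cochains whose restriction to pure $\Vs$-inputs has zero $\Ws$-component. We check this is closed under the NR bracket. A composition $f\star g$ evaluated on pure $\Vs$-inputs feeds $g(a_{\sigma(1)},\dots)$ into $f$. If both $f$ and $g$ lie in $\ker\mathbf{P}$, then $g$ on pure $\Vs$-inputs lands in $\Vs$. So $f$ again sees only $\Vs$-inputs, and its $\Ws$-part vanishes. Hence $\ker\mathbf{P}$ is a graded Lie subalgebra. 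This step I expect to need the most care, to phrase $\mathbf{P}$ precisely on all of $L$ rather than only on bihomogeneous elements.

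\emph{Computing the brackets.} With the bigrading convention in which $\widehat f$ for $f\in\Cch^{n+1}(\Vs,\Ws)$ has bidegree $n+1\mid-1$, the components have bidegrees $\|\shf\|=2\mid-1$, $\|\smu\|=1\mid0$ and $\|\smv\|=0\mid1$. Applying Lemma~\ref{lem:FN-bigrade}, the $k$-fold iterated bracket $[\cdots[\Delta,\widehat f_1],\dots,\widehat f_k]$ with a component of bidegree $p\mid q$ has bidegree $p+\sum(n_i+1)\mid q-k$. Only bidegree $\ast\mid-1$ survives $\mathbf{P}$, which forces $q=k-1$.

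This leaves three terms: $k=0$ with $\shf$, $k=1$ with $\smu$, and $k=2$ with $\smv$. Since no component has $q\ge2$ ($\widehat\psi_2=0$), all $l_k$ vanish for $k\ge3$. Moreover $\mathbf{P}$ acts as the identity on these surviving pieces, because they vanish on mixed inputs. This yields \eqref{eq:l0-I}--\eqref{eq:l2-I}, and Theorem~\ref{thm:Voronov} then gives the curved $L_\infty$-structure.

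\emph{The Maurer--Cartan characterization.} For $D\in\Cch^1(\Vs,\Ws)$, a degree-zero element of $F$, we evaluate $\shf+[\smu,\widehat D]_{NR}+\tfrac12[[\smv,\widehat D]_{NR},\widehat D]_{NR}$ on $(a,b)$. Expanding the pre-brackets gives the following pieces:
\begin{itemize}
\item $\psi_{1,\la}(a,b)$, from $\shf$;
\item $\rep_\Vs(a)_\la D(b)-\rep_\Vs(b)_{-\la-\p}D(a)-D(\{a_\la b\}_\Vs)$, from $l_1$;
\item $\{D(a)_\la D(b)\}_\Ws-D(\rep_\Ws(D(a))_\la b-\rep_\Ws(D(b))_{-\la-\p}a)$, from $\tfrac12 l_2$.
\end{itemize}
In the last item the factor $2$ coming from the two orderings cancels the $\tfrac12$, and the sign conventions in \eqref{eq:FN-bracket} must be tracked. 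The vanishing of the sum is exactly \eqref{eq:DmapI}, so Proposition~\ref{prop:graph-DmapI} concludes the proof. Sesquilinearity of $\widehat D$ follows from $D$ being a $\Rg$-module map.
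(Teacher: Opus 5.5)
Your proof is correct and follows the paper's route: Voronov's curved $V$-data, bidegree counting via Lemma~\ref{lem:FN-bigrade} to isolate $\shf$, $\smu$, $\smv$ as the only contributions to $l_0,l_1,l_2$, and expansion of the Maurer--Cartan equation into \eqref{eq:DmapI}. You also supply steps the paper only asserts, namely that $\ker\mathbf{P}$ is closed under $[\cdot,\cdot]_{NR}$ and the factor-$2$ cancellation in $\tfrac12 l_2(D,D)$. One tidy-up: drop the ``$-1\mid n+1$ reversed convention'' remark, since under Definition~\ref{def:bidegree} the lifts in $F$ have bidegree $n+1\mid -1$, which is what you actually use later.
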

 
\begin{proof}
It is clear that $F=\bigoplus_{n\geq0}\Cch^{n+1}(\Vs,\Ws)$ is an
abelian graded Lie sub-algebra of $(L,[\cdot,\cdot]_{NR})$: any two
elements of $F$ have bidegrees $(-1\mid\bullet)$, so by
Lemma~\ref{lem:FN-zero} (Preliminaries) their NR bracket vanishes.
Since $\mathbf{P}^2=\mathbf{P}$ and $[\str,\str]_{NR}=0$, the data
$(L,F,\mathbf{P},\str)$ is a curved $V$-data.  By
Theorem~\ref{thm:Voronov},
\[
  l_0 = \mathbf{P}(\str) = \shf, \qquad
  l_1(f) = \mathbf{P}([\str,\widehat{f}]_{NR}) = [\smu,\widehat{f}]_{NR},
\]
since only the $\smu$-part of $\str$ contributes to the projection
(the $\smv$ and $\shf$ parts land outside $F$ after one bracket
with an element of $F$).  Similarly,
$l_2(f,g)=[[\smv,\widehat{f}]_{NR},\widehat{g}]_{NR}$, and $l_k=0$
for $k\geq3$ by abelianess of $F$ and Lemma~\ref{lem:FN-bigrade}.
 
Expanding \eqref{eq:MC-I} via \eqref{eq:l0-I}-\eqref{eq:l2-I} and
using the explicit NR-bracket formulas recovers exactly \eqref{eq:DmapI},
completing the proof.
\end{proof}
 
\begin{corollary}[Controlling algebra for  modified $r$-matrices]
\label{cor:control-modified-r}
In the setting of Example~\ref{ex:modified-r},
$\bigl(\bigoplus_{n\geq0}\Cch^{n+1}(\Vs,\Vs),\,l_0,\,l_1{=}0,\,l_2\bigr)$
is a curved $L_\infty$-algebra where $l_0=\alpha\{\cdot_\la\cdot\}_\Vs$
and $l_2$ is induced by $[\cdot,\cdot]_\Vs$ via the 
NR
%FN
mechanism.
Maurer-Cartan elements are exactly  modified $r$-matrices.
\end{corollary}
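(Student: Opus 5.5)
The plan is to deduce the statement from Theorem~\ref{thm:control-I}, applied to the quasi-twilled Lie conformal algebra of Example~\ref{ex:modified-r}, and then to \emph{re-centre} the resulting curved $L_\infty$-algebra at a suitable multiple of the identity. The point of the re-centring is to kill the linear term while keeping the Maurer-Cartan characterization.

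\textbf{Step 1 (direct specialization).} Take $\Ep=\Vs\oplus\Vs$ with $\smu$ built from $\{\cdot_\la\cdot\}_\Vs$ and $\ad$, with $\smv$ built from $\{\cdot_\la\cdot\}_\Ws=\{\cdot_\la\cdot\}_\Vs$ and $\ad$, and with $\psi_1=-\{\cdot_\la\cdot\}_\Vs$. Theorem~\ref{thm:control-I} then gives a curved $L_\infty$-algebra $(\bigoplus_n\Cch^{n+1}(\Vs,\Vs),\tilde l_0,\tilde l_1,\tilde l_2)$ with
\[
\tilde l_0=\shf,\qquad \tilde l_1=[\smu,\widehat{\cdot}\,]_{NR},\qquad \tilde l_2=[[\smv,\widehat{\cdot}\,]_{NR},\widehat{\cdot}\,]_{NR}.
\]
Its Maurer-Cartan elements are exactly the solutions of \eqref{eq:conf-modified-r}. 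In this form $\tilde l_1$ is not zero: evaluated on $f$, it yields the adjoint-type coboundary of $f$.

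\textbf{Step 2 (twisting by a scalar multiple of the identity).} For a curved $L_\infty$-algebra with only $l_0,l_1,l_2$, and any degree-$1$ element $x$ (here $x=c\,\mathrm{Id}_\Vs\in\Cch^1(\Vs,\Vs)$, which is a $\Rg$-module map), the twisted operations are
\[
l^x_0=\tilde l_0+\tilde l_1(x)+\tfrac12\tilde l_2(x,x),\qquad l^x_1=\tilde l_1+\tilde l_2(x,\cdot),\qquad l^x_2=\tilde l_2 .
\]
Moreover $D$ is Maurer-Cartan for $\tilde l$ iff $D-x$ is Maurer-Cartan for $l^x$. The key computation is $\tilde l_2(\mathrm{Id},f)$, which I will do using the explicit NR formulas. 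Since $\smv$ and $\smu$ are built from the same bracket, $\tilde l_2(\mathrm{Id},f)$ should reproduce $\tilde l_1(f)$ up to a fixed sign. I will then choose $c=\pm1$ so that $l^x_1=0$. This is the step I expect to be the main obstacle. The $\la$-shifts $-\la-\p$ and the Koszul signs in the pre-bracket $\star$ must match term by term, and I will check them first on $f\in\Cch^1$ using the sesquilinearity identities \eqref{eq:sesqui-extra1}--\eqref{eq:sesqui-extra2}, then for general arity by the bidegree bookkeeping of Lemma~\ref{lem:FN-bigrade}.

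\textbf{Step 3 (identifying $l_0$ and $l_2$).} Next I will compute $l_0^x=\shf+c[\smu,\widehat{\mathrm{Id}}]_{NR}+\tfrac{c^2}{2}[[\smv,\widehat{\mathrm{Id}}]_{NR},\widehat{\mathrm{Id}}]_{NR}$. Each summand is a scalar multiple of $\{\cdot_\la\cdot\}_\Vs$, so $l_0=\alpha\{\cdot_\la\cdot\}_\Vs$, where $\alpha$ is the resulting constant (this fixes the coefficient in the statement). The operation $l_2=\tilde l_2$, restricted to $\Cch^*(\Vs,\Vs)$, is by definition the derived NR bracket of the structure $[\cdot,\cdot]_\Vs$, as claimed. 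Finally, the higher Jacobi identities for $(l_0,0,l_2)$ hold because twisting preserves curved $L_\infty$-structures, and $l_k=0$ for $k\ge3$ is inherited from Theorem~\ref{thm:control-I}.

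\textbf{Step 4 (Maurer-Cartan elements).} Maurer-Cartan elements of $(l_0,0,l_2)$ are the shifts $R=D-c\,\mathrm{Id}$ of solutions $D$ of \eqref{eq:conf-modified-r}. Expanding $l_0+\tfrac12 l_2(R,R)=0$ gives exactly the modified Yang-Baxter equation with constant $\alpha$, in the normalized form without linear term. Hence the Maurer-Cartan elements are precisely the modified $r$-matrices.
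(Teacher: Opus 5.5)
\textbf{There is a genuine gap: Step~2 fails, and it is the crux of your plan.} The explicit NR bracket is not needed to see this. Polarize the graph equation \eqref{eq:DmapI}, which is what $\tilde l_0+\tilde l_1(D)+\tfrac12\tilde l_2(D,D)$ reproduces up to an overall sign $\epsilon$. In the literal setting of Example~\ref{ex:modified-r} ($\{\cdot_\la\cdot\}_\Ws=\{\cdot_\la\cdot\}_\Vs$, $\rep_\Vs=\rep_\Ws=\ad$, $\psi_1=-\{\cdot_\la\cdot\}_\Vs$), for $f\in\Cch^1(\Vs,\Vs)$ you get
\[
\tilde l_1(f)_\la(a,b)=\epsilon\bigl(f(\{a_\la b\})-\{a_\la f(b)\}-\{f(a)_\la b\}\bigr),\qquad
\tilde l_2(\mathrm{Id},f)_\la(a,b)=2\epsilon\, f(\{a_\la b\}).
\]
The reason is that in $\tilde l_2(\mathrm{Id},f)$ the terms $\mathrm{Id}\bigl(\rep_\Ws(f(a))_\la b-\rep_\Ws(f(b))_{-\la-\p}a\bigr)$ cancel exactly against $-\{\mathrm{Id}(a)_\la f(b)\}-\{f(a)_\la\mathrm{Id}(b)\}$. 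Hence
\[
l^{c\,\mathrm{Id}}_1(f)=\epsilon\bigl((1+2c)f(\{a_\la b\})-\{a_\la f(b)\}-\{f(a)_\la b\}\bigr).
\]
The action terms survive for every $c$, so no scalar re-centring produces $(l_0,0,l_2)$. Equivalently, substituting $D=R+c\,\mathrm{Id}$ into the graph equation always leaves $-\{R(a)_\la b\}-\{a_\la R(b)\}$.

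Your Steps~1 and~4 also clash with each other. Equation \eqref{eq:conf-modified-r} has no term linear in $D$. If it really were the expanded Maurer--Cartan equation of $\tilde l$, you would already have $\tilde l_1=0$, and twisting would create a linear term rather than remove it. Even granting Step~2, the Maurer--Cartan elements of the twisted algebra are the shifts $R=D-c\,\mathrm{Id}$. These do not satisfy the same equation as $D$: if $R$ solves the normalized equation with constant $\beta$, then $D$ picks up an extra term $-2c\,D(\{a_\la b\})$ and constant $\beta+c^2$. So the conclusion ``precisely the modified $r$-matrices'' does not follow.

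\textbf{The root cause is that Example~\ref{ex:modified-r}, read literally, is not a quasi-twilled Lie conformal algebra.} Its bracket is that of $\Vs\otimes A$ with $A=\C e\oplus\C f$, $e^2=e-f$, $ef=fe=e+f$, $f^2=f$. This $A$ is not associative: $(ee)f=e$ but $e(ef)=2e$. Consequently Jacobi fails on $(x\otimes e,y\otimes e,z\otimes f)$ unless $\{\{x_\la y\}_{\la+\mu}z\}=0$ for all $x,y,z\in\Vs$. Thus $[\str,\str]_{NR}\neq0$, and Theorem~\ref{thm:control-I} cannot be invoked in Step~1. Moreover, the graph equation there is not \eqref{eq:conf-modified-r}; it is
\[
D(\{D(a)_\la b\}+\{a_\la D(b)\}+\{a_\la b\})=\{D(a)_\la D(b)\}+\{a_\la D(b)\}+\{D(a)_\la b\}-\{a_\la b\}.
\]

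\textbf{The intended route.} The corollary is meant as a direct specialization of Theorem~\ref{thm:control-I}. That works once you use the structure that actually encodes the modified Yang--Baxter equation:
\[
\{\cdot_\la\cdot\}_\Vs=0,\quad \rep_\Vs=0,\quad \{\cdot_\la\cdot\}_\Ws=\{\cdot_\la\cdot\},\quad \rep_\Ws=\ad,\quad \psi_1=\alpha\{\cdot_\la\cdot\}.
\]
That is, $\Ep\cong\Vs\otimes\C[t]/(t^2-\alpha)$, which is a genuine Lie conformal algebra with $\Ws$ a subalgebra. There $\smu=0$, so $l_1=[\smu,\widehat{\cdot}\,]_{NR}=0$ with no twisting at all. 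Also $l_0=\shf$ is the lift of $\alpha\{\cdot_\la\cdot\}$, and $l_2$ is the derived bracket of $\smv$, built from $\{\cdot_\la\cdot\}$ and $\ad$. Finally, \eqref{eq:DmapI} becomes
\[
\{D(a)_\la D(b)\}=D(\{D(a)_\la b\}+\{a_\la D(b)\})-\alpha\{a_\la b\},
\]
the modified Yang--Baxter equation.
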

 
\begin{corollary}[Controlling algebra for  crossed homomorphisms]
\label{cor:control-crossed}
In the setting of Example~\ref{ex:crossed-hom},
$\bigl(\bigoplus_{n\geq1}\Cch^n(\Vs,\Ws),\,\dM,\,\{\cdot,\cdot\}\bigr)$
is a differential graded Lie algebra, where the differential
$\dM:\Cch^p(\Vs,\Ws)\to\Cch^{p+1}(\Vs,\Ws)$ is
\begin{equation}\label{eq:diff-I}
\begin{split}
  (\dM f)_{\la_1,\ldots,\la_p}(a_1,\ldots,a_{p+1})
  &= \sum_{i=1}^{p+1}(-1)^{p+i}\rep(a_i)_{\la_i}
     f_{\widehat{\la_i}}(a_1,\ldots,\widehat{a_i},\ldots,a_{p+1})
     \big|_{\la_{p+1}^\ddag}\\
  &\quad + \sum_{i<j}(-1)^{p+i+j-1}
     f_{\la_i+\la_j,\widehat{\la_i},\widehat{\la_j}}
     (\{a_i{}_{\la_i}a_j\}_\Vs,a_1,\ldots,
     \widehat{a_i},\ldots,\widehat{a_j},\ldots,a_{p+1})
     \big|_{\la_{p+1}^\ddag},
\end{split}
\end{equation}
and the graded bracket is induced by $\{\cdot_\la\cdot\}_\Ws$.
This is the \textit{controlling algebra for  crossed
homomorphisms of weight $\alpha$}.
\end{corollary}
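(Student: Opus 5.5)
The plan is to specialize Theorem~\ref{thm:control-I} to the quasi-twilled structure of Example~\ref{ex:crossed-hom}, where $\Ep=\Vs\ltimes_\rep\Ws$ with $\psi_1=0$. Here $\rep_\Ws=0$, $\rep_\Vs=\rep$, and both $\{\cdot_\la\cdot\}_\Vs$ and $\{\cdot_\la\cdot\}_\Ws$ are Lie conformal brackets. Since $\psi_1=0$, we have $\shf=0$, so $l_0=0$. The data $(L,F,\mathbf{P},\str)$ is then an honest (uncurved) $V$-data, because $\str=\smu+\smv$ has no component in $F$. By Theorem~\ref{thm:Voronov} we therefore get an $L_\infty$-algebra whose only possibly nonzero brackets are $l_1=[\smu,\widehat{\cdot}\,]_{NR}$ and $l_2=[[\smv,\widehat{\cdot}\,]_{NR},\widehat{\cdot}\,]_{NR}$. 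The vanishing $l_k=0$ for $k\geq3$ was already established in Theorem~\ref{thm:control-I}.

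An $L_\infty$-algebra with only $l_1,l_2$ is a dg Lie algebra after the standard décalage/sign adjustment. I will define
\[
  \dM f:=(-1)^{p-1}l_1(f)
  \quad\text{and}\quad
  \{f,g\}:=(-1)^{p-1}l_2(f,g)
  \quad\text{for } f\in\Cch^p(\Vs,\Ws),
\]
mirroring the sign conventions of Theorem~\ref{thm:Linf-quasi}. Graded antisymmetry and the graded Jacobi identity of $\{\cdot,\cdot\}$ then follow from the higher Jacobi identity for $n=3$, since $l_3=0$. The identities $\dM^2=0$ and the Leibniz rule come from the cases $n=1$ and $n=2$. One could equally derive these directly from the graded Jacobi identity of $[\cdot,\cdot]_{NR}$ together with the relations \eqref{eq:quasi-MC}, which here read $[\smu,\smu]_{NR}=0$, $[\smu,\smv]_{NR}=0$ and $[\smv,\smv]_{NR}=0$.

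Next I identify $\dM$ explicitly. The cochain $\smu$ is exactly the lift $\widehat{\{\cdot_\la\cdot\}_\Vs}+\widehat\rep$ of the semidirect-product structure of $\Vs$ acting on $\Ws$ via $\rep$. Proposition~\ref{prop:FN-module} then gives
\[
  (-1)^{p-1}[\smu,\widehat f]_{NR}=\dM_{\{\cdot\}_\Vs+\rep}f,
\]
the Chevalley--Eilenberg coboundary \eqref{eq:coboundary}. It remains to rewrite \eqref{eq:coboundary} into the form \eqref{eq:diff-I}, by moving the bracket argument $\{a_{i\,\la_i}a_j\}$ to the first slot via the antisymmetry (C2) and reindexing. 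This introduces the global sign $(-1)^{p}$ relative to \eqref{eq:coboundary}, matching the $(-1)^{p+i}$ and $(-1)^{p+i+j-1}$ in \eqref{eq:diff-I}.

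For the bracket, I compute $[[\smv,\widehat f]_{NR},\widehat g]_{NR}$ on $\Vs$-arguments. Since $\rep_\Ws=0$, $\smv$ reduces to the lift of $\{\cdot_\la\cdot\}_\Ws$. Only the terms in which $\smv$ receives both outputs $f(\cdots)$ and $g(\cdots)$ survive, which gives $\{f_\bullet(\ldots)_\la g_\bullet(\ldots)\}_\Ws$ summed over unshuffles. This shows that the bracket is induced by $\{\cdot_\la\cdot\}_\Ws$.

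The main obstacle is the sign and $\la$-bookkeeping: matching \eqref{eq:coboundary} to \eqref{eq:diff-I}, in particular the $\la_{p+1}^\ddag$ substitutions and the convention choice between $(-1)^{p-1}$ and $(-1)^{p}$. I would first verify this in degrees $p=1,2$ against \eqref{eq:conf-crossed-hom}, checking that the Maurer--Cartan equation $\dM D+\tfrac12\{D,D\}=0$ reproduces the crossed-homomorphism identity, and then fix the signs accordingly.
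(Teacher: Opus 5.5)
Your route is the one the paper intends. The paper gives no separate proof: the corollary is Theorem~\ref{thm:control-I} with $\shf=0$. Then $l_0=0$, the curvature disappears, $l_1(f)=[\smu,\widehat f\,]_{NR}$ is a Chevalley--Eilenberg coboundary by Proposition~\ref{prop:FN-module}, and $l_2(f,g)=[[\smv,\widehat f\,]_{NR},\widehat g\,]_{NR}$ involves only $\{\cdot_\la\cdot\}_\Ws$ because $\rep_\Ws=0$.

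The genuine problem is your normalization $\dM f:=(-1)^{p-1}l_1(f)$, which breaks two of your steps.

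First, it does not give \eqref{eq:diff-I}. By Proposition~\ref{prop:FN-module}, your $\dM$ is exactly the coboundary $\dM_{\omega+\rep}$ of \eqref{eq:coboundary}, with $\omega=\{\cdot_\la\cdot\}_\Vs$. Compare only the $\rep(a_i)_{\la_i}$-terms, which have the same shape in both formulas: they carry $(-1)^{i+1}$ in \eqref{eq:coboundary} but $(-1)^{p+i}=(-1)^{p-1}(-1)^{i+1}$ in \eqref{eq:diff-I}. The bracket terms behave the same way, since $(-1)^{p+i+j-1}=(-1)^{p-1}(-1)^{i+j}$. Reindexing via (C2) only re-expresses the same cochain and cannot produce an overall sign, so the ``global sign $(-1)^p$'' you invoke does not exist. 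In fact \eqref{eq:diff-I} is $l_1(f)$ itself, with no twist. This is also the convention of Theorem~\ref{thm:Linf-quasi}, which you say you are mirroring: there $\delta$ carries no sign, only the binary bracket does.

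Second, your pair $\bigl((-1)^{p-1}l_1,(-1)^{p-1}l_2\bigr)$ is not a dg Lie algebra. Graded Jacobi together with $[\smu,\smv]_{NR}=0$ gives, for $f\in\Cch^p(\Vs,\Ws)$ and $g\in\Cch^q(\Vs,\Ws)$,
\[
  l_1l_2(f,g)=-l_2(l_1f,g)-(-1)^{p-1}l_2(f,l_1g).
\]
With your signs at $p=q=1$ this yields $\dM\{f,g\}=-\bigl(\{\dM f,g\}-\{f,\dM g\}\bigr)$. The Leibniz rule would therefore force $l_2(l_1f,g)+l_2(f,l_1g)=0$, which fails in general; take $f=g=D$. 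Your planned Maurer--Cartan check at $p=1$ cannot detect this, because $(-1)^{p-1}=1$ there.

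The repair is to set $\dM:=l_1$ and keep $\{f,g\}:=(-1)^{p-1}l_2(f,g)$.
\begin{itemize}
\item $\dM^2=0$ follows from $[\smu,\smu]_{NR}=0$.
\item The displayed identity becomes exactly the Leibniz rule $\dM\{f,g\}=\{\dM f,g\}+(-1)^p\{f,\dM g\}$.
\item Graded antisymmetry comes from the graded symmetry $l_2(f,g)=(-1)^{(p-1)(q-1)}l_2(g,f)$, not from the $n=3$ identity.
\item The Jacobi identity comes from the $n=3$ relation with $l_3=0$.
\end{itemize}
With this normalization the rest of your argument goes through.
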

 
\begin{corollary}[Controlling algebra for  derivations]
\label{cor:control-deriv}
In the setting of Example~\ref{ex:conf-derivation}, the controlling
structure is the cochain complex
$\bigl(\bigoplus_{n\geq1}\Cch^n(\Vs,\Mod),\,\dM\bigr)$
with $\dM$ the Chevalley-Eilenberg coboundary of $(\Vs,\{\cdot_\la\cdot\}_\Vs)$
with values in $(\Mod;\rep)$.  This is the \textit{controlling algebra
for derivations} from $(\Vs,\{\cdot_\la\cdot\}_\Vs)$ to
$(\Mod;\rep)$.
\end{corollary}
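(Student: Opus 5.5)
The plan is to specialize Theorem~\ref{thm:control-I} to the setting of Example~\ref{ex:conf-derivation}: $\Ep=\Vs\ltimes_\rep\Mod$, with $\Ws=\Mod$, $\{u_\la v\}_\Ws=0$, $\rep_\Ws=0$ and $\psi_1=0$. The structure then reduces to $\shf=0$ and $\smv=0$, while $\smu=\widehat{\{\cdot_\la\cdot\}_\Vs}+\widehat\rep$ is the semi-direct product cochain \eqref{eq:nu-hat}. First, \eqref{eq:l0-I} gives $l_0=\shf=0$, so the curvature disappears. Second, \eqref{eq:l2-I} gives $l_2(f,g)=[[\smv,\widehat f]_{NR},\widehat g]_{NR}=0$, since $\smv=0$. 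Hence the curved $L_\infty$-algebra of Theorem~\ref{thm:control-I} collapses to the single operation $l_1$. The higher Jacobi identity for $n=1$ then reads $l_1\circ l_1=0$, so $(F,l_1)$ is a cochain complex.

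Next I identify $l_1$ with the Chevalley--Eilenberg coboundary. By \eqref{eq:l1-I}, $l_1(f)=[\smu,\widehat f]_{NR}=[\widehat{\{\cdot_\la\cdot\}_\Vs}+\widehat\rep,\widehat f]_{NR}$. Proposition~\ref{prop:FN-module}, equation \eqref{eq:FN-coboundary}, with $\Ep$ replaced by $(\Vs,\{\cdot_\la\cdot\}_\Vs)$ and $\Mod$ with action $\rep$, gives
\[
l_1(f)=(-1)^{k-1}\dM f, \qquad f\in\Cch^k(\Vs,\Mod).
\]
A sign change $f\mapsto(-1)^{\epsilon(k)}f$ on each homogeneous component, with $\epsilon(k)$ chosen so that $\epsilon(k)+\epsilon(k+1)\equiv k-1 \pmod 2$ (for example $\epsilon(k)=\lfloor (k-1)/2\rfloor$), is an isomorphism of complexes $(F,l_1)\cong(\bigoplus_{n\geq1}\Cch^n(\Vs,\Mod),\dM)$. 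Alternatively, one simply notes that $l_1$ and $\dM$ differ by a degree-wise sign, so they have the same cocycles and coboundaries. This shows the controlling structure is exactly the stated complex.

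Finally, for the Maurer--Cartan statement, \eqref{eq:MC-I} reduces to $l_1(D)=0$, that is, $\dM D=0$ for $D\in\Cch^1(\Vs,\Mod)$. I would check directly from \eqref{eq:coboundary} with $k=1$ that
\[
(\dM D)_\la(a,b)=\rep(a)_\la D(b)-\rep(b)_{-\la-\p}D(a)-D(\{a_\la b\}_\Vs),
\]
which vanishes precisely when \eqref{eq:conf-derivation} holds. This is consistent with Example~\ref{ex:conf-derivation} and Theorem~\ref{thm:control-I}.

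The main obstacle is the sign and identification bookkeeping in the $k=1$ check. One has to confirm that the lift $\widehat\rep$ used in $\smu$ matches \eqref{eq:lift-b}. One also has to confirm that the last term of \eqref{eq:coboundary}, with $\la_2^\ddag=-\la-\p$, reproduces $-\rep(b)_{-\la-\p}D(a)$ using antisymmetry (C2). I would settle this by computing $[\smu,\widehat D]_{NR}$ on the pair $((a,0),(b,0))$ directly from the pre-bracket formula rather than relying on general signs.
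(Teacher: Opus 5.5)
Your proposal is correct and is essentially the argument the paper leaves implicit: you specialize Theorem~\ref{thm:control-I}, note that $\psi_1=0$, $\rep_\Ws=0$ and $\{\cdot_\la\cdot\}_\Ws=0$ force $\shf=\smv=0$ so that $l_0$ and $l_2$ vanish, and identify $l_1(f)=[\smu,\widehat f]_{NR}$ with $(-1)^{k-1}\dM f$ via Proposition~\ref{prop:FN-module}. Your degree-wise rescaling by $(-1)^{\lfloor (k-1)/2\rfloor}$ works because $\lfloor (k-1)/2\rfloor+\lfloor k/2\rfloor=k-1$, and it turns that sign discrepancy into a genuine isomorphism of complexes, a point the paper glosses over.
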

 
\begin{corollary}
\label{cor:control-hom}
In the setting of Example~\ref{ex:conf-hom}, the controlling structure
is a differential graded Lie algebra
$\bigl(\bigoplus_{n\geq1}\Cch^n(\Vs,\Ws),\,\dM,\,\{\cdot,\cdot\}\bigr)$,
where $\dM$ is the Chevalley-Eilenberg coboundary for $\Vs$ acting
trivially on $\Ws$.  This is the \textit{controlling algebra for
Lie conformal  algebra homomorphisms}.
\end{corollary}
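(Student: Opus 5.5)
The statement is Corollary~\ref{cor:control-hom}, and I plan to specialize Theorem~\ref{thm:control-I} to the direct product of Example~\ref{ex:conf-hom}. There $\rep_\Vs=\rep_\Ws=0$ and $\psi_1=0$, so $\shf=0$ and the curvature $l_0=\shf$ vanishes. The curved $L_\infty$-algebra on $F=\bigoplus_{n\geq0}\Cch^{n+1}(\Vs,\Ws)$ therefore has only $l_1(f)=[\smu,\widehat f]_{NR}$ and $l_2(f,g)=[[\smv,\widehat f]_{NR},\widehat g]_{NR}$, with $l_k=0$ for $k\geq3$.

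An $L_\infty$-algebra with $l_0=0$ and only $l_1,l_2$ nonzero is a differential graded Lie algebra after the usual décalage/sign convention. Concretely, I would set $\dM f:=(-1)^{m-1}l_1(f)$ and $\{f,g\}:=(-1)^{m-1}l_2(f,g)$ (or whichever signs match Theorem~\ref{thm:Linf-quasi}). The higher Jacobi identities for $n=1,2,3$ then give, in turn:
\begin{itemize}
\item $\dM^2=0$;
\item $\dM$ is a derivation of $\{\cdot,\cdot\}$;
\item the graded Jacobi identity for $\{\cdot,\cdot\}$, since the $l_3$ terms vanish.
\end{itemize}
The identities in the cases $n=1$ and $n=2$ also contain $l_0$ terms, so these implications use the fact that the curvature is zero.

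Next I identify $\dM$. Here $\smu$ reduces to $\widehat{\{\cdot_\la\cdot\}_\Vs}$ (plus the zero action), so $\smu=\widehat\omega+\widehat\rep$ with $\omega=\{\cdot_\la\cdot\}_\Vs$ and $\rep=0$ acting on $\Ws$. By Proposition~\ref{prop:FN-module}, $[\smu,\widehat f]_{NR}=(-1)^{k-1}\dM_{\omega+0}f$. This is, up to sign, the Chevalley-Eilenberg coboundary of $\Vs$ with coefficients in the trivial module $\Ws$: only the bracket terms $f(\ldots,\{a_{i\,\la_i}a_j\}_\Vs)$ survive. This matches the statement. I would also check that it agrees with $\delta^D$ at $D=0$ in Example~\ref{ex:coh conf-hom}.

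Finally I describe the bracket. Since $\smv=\widehat{\{\cdot_\la\cdot\}_\Ws}$, evaluating $[[\smv,\widehat f]_{NR},\widehat g]_{NR}$ on elements of $\Vs$ gives the conformal cup-type bracket
\[
\{f,g\}_{\la_1,\ldots}(a_1,\ldots,a_{m+n})=\sum_{\sigma}\pm\{f(a_{\sigma(1)},\ldots)_{\la}\,g(\ldots)\}_\Ws,
\]
with $\la$ the appropriate sum of spectral parameters. In particular $\{D,D\}_\la(a,b)=2\{D(a)_\la D(b)\}_\Ws$, so the Maurer-Cartan equation $\dM D+\half\{D,D\}=0$ is exactly \eqref{eq:conf-Lie-hom}, consistent with Theorem~\ref{thm:control-I}.

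The main obstacle is the sign bookkeeping: the signs relating $l_1,l_2$ to $\dM,\{\cdot,\cdot\}$, and the $\la^\ddag$ substitutions in the unshuffle sums when expanding the double NR bracket. I would fix conventions by checking the degree-$1$ case on $D$ against \eqref{eq:conf-Lie-hom} and then rely on the general $L_\infty$ identities rather than verifying the DGLA axioms by hand.
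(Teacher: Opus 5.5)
Your proposal is correct and follows the route the paper takes implicitly (it gives no separate proof): specialize Theorem~\ref{thm:control-I}, observe that $\psi_1=0$ kills the curvature $l_0=\shf$, and read off $l_1=[\smu,\cdot\,]_{NR}$ as the Chevalley--Eilenberg coboundary with trivial coefficients (up to the sign $(-1)^{m-1}$ of Proposition~\ref{prop:FN-module}) and $l_2=[[\smv,\cdot\,]_{NR},\cdot\,]_{NR}$ as the bracket induced by $\{\cdot_\la\cdot\}_\Ws$. One small correction to your tentative signs: taking both $\dM=(-1)^{m-1}l_1$ and $\{f,g\}=(-1)^{m-1}l_2(f,g)$ breaks the Leibniz rule (already for $f,g\in\Cch^1(\Vs,\Ws)$ it gives $\dM\{f,g\}=-(\{\dM f,g\}-\{f,\dM g\})$), so either use $\dM=l_1$ with $\{f,g\}=(-1)^{m-1}l_2(f,g)$ as in Theorem~\ref{thm:Linf-quasi}, or keep $\dM$ equal to the Chevalley--Eilenberg coboundary and take $\{f,g\}=(-1)^{m(n-1)}l_2(f,g)$ for $f\in\Cch^m$, $g\in\Cch^n$.
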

 
Let $D:\Vs\to\Ws$ be a right deformation map.  By Theorem~\ref{thm:control-I}, $D$ is
a Maurer-Cartan element of the curved $L_\infty$-algebra above.  The
twisted $L_\infty$-algebra \cite{DSV,Get} has structure maps
\begin{align}
  l_1^D(f) &= l_1(f) + l_2(D,f), \label{eq:l1D-I}\\
  l_2^D(f,g) &= l_2(f,g), \label{eq:l2D-I}\\
  l_k^D &= 0, \quad k\geq3. \label{eq:lkD-I}
\end{align}
 
\begin{theorem}\label{thm:deform-DmapI}
Let $D:\Vs\to\Ws$ be a right deformation map\ of $(\Ep,\Vs,\Ws)$.  A $\Rg$-module
homomorphism $D':\Vs\to\Ws$ gives a right deformation map\ $D+D'$ if and only if $D'$
is a Maurer-Cartan element of the twisted $L_\infty$-algebra
$\bigl(\bigoplus_{n\geq0}\Cch^{n+1}(\Vs,\Ws),\,l_1^D,\,l_2^D\bigr)$:
\begin{equation}\label{eq:MC-deform-I}
  l_1^D(D') + \tfrac{1}{2}l_2^D(D',D') = 0.
\end{equation}
\end{theorem}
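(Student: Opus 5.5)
The plan is to reduce everything to Theorem~\ref{thm:control-I}, using that the curved $L_\infty$-algebra there has only $l_0,l_1,l_2$, so the Maurer-Cartan expression is a quadratic polynomial that can be expanded directly. By Theorem~\ref{thm:control-I}, $D+D'$ is a right deformation map if and only if
\begin{equation*}
  l_0 + l_1(D+D') + \tfrac12 l_2(D+D',D+D') = 0 .
\end{equation*}
Both $D$ and $D'$ lie in $\Cch^1(\Vs,\Ws)$, the degree-$1$ part of $F$. So I would first record that $l_2$ is graded symmetric on these elements. This gives $l_2(D,D')=l_2(D',D)$, which follows either from Voronov's construction (Theorem~\ref{thm:Voronov}) directly, or from the graded Jacobi identity for $[\cdot,\cdot]_{NR}$ together with $[\widehat D,\widehat{D'}]_{NR}=0$ from Lemma~\ref{lem:FN-zero}.

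With symmetry in hand, I expand by multilinearity:
\begin{equation*}
  \Bigl(l_0 + l_1(D) + \tfrac12 l_2(D,D)\Bigr) + \Bigl(l_1(D') + l_2(D,D')\Bigr) + \tfrac12 l_2(D',D') .
\end{equation*}
The first bracket vanishes because $D$ is itself a Maurer-Cartan element, again by Theorem~\ref{thm:control-I}. By \eqref{eq:l1D-I}--\eqref{eq:l2D-I}, what remains is exactly $l_1^D(D')+\tfrac12 l_2^D(D',D')$. Hence $D+D'$ is a Maurer-Cartan element of the original curved algebra if and only if $D'$ satisfies \eqref{eq:MC-deform-I}, and each implication follows by reading the identity in one direction or the other.

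Next I would check that the twisted data form a genuine (uncurved) $L_\infty$-algebra, so that "Maurer-Cartan element of the twisted $L_\infty$-algebra" makes sense. The twisted curvature is $l_0^D=l_0+l_1(D)+\tfrac12 l_2(D,D)=0$. There are no higher corrections to $l_1^D$ or $l_2^D$ because $l_k=0$ for $k\ge3$, which recovers \eqref{eq:l1D-I}--\eqref{eq:lkD-I}. The fact that twisting by a Maurer-Cartan element preserves the $L_\infty$ relations is the standard result cited from \cite{DSV,Get}. It can also be seen directly by noting that $l_k^D=\mathbf P([\cdots[\Omega^D,\cdot],\cdots])$ with $\Omega^D=e^{[\cdot,\widehat D]}\Omega$, so that $(L,F,\mathbf P,\Omega^D)$ is a $V$-data whose $\mathbf P(\Omega^D)=0$.

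The main obstacle I expect is the graded symmetry $l_2(D,D')=l_2(D',D)$, which is the one step that is not bookkeeping. If citing Theorem~\ref{thm:Voronov} for symmetry is not accepted, I would verify it directly. Since $\|\widehat D\|=\|\widehat{D'}\|=-1\mid1$, the Jacobi identity gives
\begin{equation*}
  [[\smv,\widehat D],\widehat{D'}] - [[\smv,\widehat{D'}],\widehat D] = \pm[\smv,[\widehat D,\widehat{D'}]] = 0 ,
\end{equation*}
with the sign fixed by the degrees ($|\widehat D|=|\widehat{D'}|=0$ in the shifted grading, so no extra signs appear). The rest is linear algebra.
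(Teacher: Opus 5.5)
Your proposal is correct and follows the paper's proof: expand $l_0+l_1(D+D')+\tfrac12 l_2(D+D',D+D')$ via Theorem~\ref{thm:control-I}, cancel the Maurer--Cartan equation of $D$, and read off $l_1^D(D')+\tfrac12 l_2^D(D',D')$. Your extra checks (that $l_2$ is symmetric on $\Cch^1(\Vs,\Ws)$, and that twisting by $D$ kills the curvature) fill in details the paper leaves implicit. One small slip: for $D:\Vs\to\Ws$ the bidegree is $\|\widehat D\|=1\mid -1$, not $-1\mid 1$. Lemma~\ref{lem:FN-zero} also covers the $1\mid -1$ case, so $[\widehat D,\widehat{D'}]_{NR}=0$ still holds.
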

 
\begin{proof}
By Theorem~\ref{thm:control-I}, $D+D'$ is a right deformation map\ if and only if
$l_0+l_1(D+D')+\frac{1}{2}l_2(D+D',D+D')=0$.  Since $D$ satisfies
\eqref{eq:MC-I}, this reduces to
$l_1(D')+l_2(D,D')+\frac{1}{2}l_2(D',D')=0$, i.e.\ \eqref{eq:MC-deform-I}.
\end{proof}
 
\begin{remark}
Applying Theorem~\ref{thm:deform-DmapI} to
Corollaries~\ref{cor:control-modified-r}-\ref{cor:control-hom}
recovers the differential graded Lie algebras governing deformations of
conformal modified $r$-matrices (new), conformal crossed homomorphisms,
conformal derivations, and Lie conformal  algebra homomorphisms.
\end{remark}

%==========================================================================
\section{Controlling algebras and cohomologies of left deformation maps}
\label{sec:typeII}
%==========================================================================
 
In this section, we introduce the notion of left deformation map on a quasi-twilled Lie conformal algebra, unifying relative Rota-Baxter operators, twisted Rota-Baxter operators, Reynolds operators, and deformation maps of matched pairs of Lie conformal algebras. We also describe the controlling algebra of left deformation maps on a quasi-twilled Lie conformal algebra, showing that it carries the structure of a curved $L_\infty$-algebra. Throughout this section, $(\Ep,\Vs,\Ws)$ denotes a quasi-twilled Lie conformal algebra with structure $\str=\shf+\smu+\smv$. 
 
%--------------------------------------------------------------------------
\subsection{Left deformation maps}
\label{subsec:DmapII}
%--------------------------------------------------------------------------
 
Let $B:\Ws\to\Vs$ be a $\Rg$-module homomorphism.  Its \textit{graph} is the $\Rg$-submodule
\[
  \mathrm{Gr}(B) := \{(B(u),u)\mid u\in\Ws\} \subset \Vs\oplus\Ws = \Ep.
\]
 
\begin{definition}\label{def:DmapII}
Let $(\Ep,\Vs,\Ws)$ be a quasi-twilled  Lie conformal algebra.
A left deformation map  of $(\Ep,\Vs,\Ws)$
is a $\Rg$-module homomorphism $B:\Ws\longrightarrow\Vs$
such that its graph is a Lie conformal subalgebra of $\Ep$. 
\end{definition}

%===
\begin{proposition}\label{left Dmap identity}
A $\Rg$-module homomorphism $B:\Ws\to\Vs$ is a left deformation map  if and only if, for all $u,v\in\Ws$, we have
\begin{align}
  & \{B(u)_\la B(v)\}_\Vs
  + \rep_\Ws(u)_\la B(v) - \rep_\Ws(v)_{-\la-\p}B(u) \nonumber\\
  =&  B(\{u_\la v\}_\Ws + \rep_\Vs(B(u))_\la v
             - \rep_\Vs(B(v))_{-\la-\p}u
             + \psi_{1,\la}(B(u),B(v))), \label{eq:DmapII}  
\end{align}
\end{proposition}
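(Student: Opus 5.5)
The plan mirrors the proof of Proposition~\ref{prop:graph-DmapI}. First, since $B$ is a $\Rg$-module homomorphism, $\mathrm{Gr}(B)$ is a $\Rg$-submodule of $\Ep$: indeed $\p(B(u),u)=(B(\p u),\p u)\in\mathrm{Gr}(B)$. So $\mathrm{Gr}(B)$ is a Lie conformal subalgebra exactly when it is closed under the $\la$-bracket. By this we mean that for all $u,v\in\Ws$ every coefficient of the polynomial $\str_\la((B(u),u),(B(v),v))\in\Ep[\la]$ lies in $\mathrm{Gr}(B)$.

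Next, compute this bracket with the full bracket formula \eqref{eq:full-bracket}, putting $a=B(u)$, $b=B(v)$ and $\psi_2=0$ (quasi-twilled case). This gives
\[
\str_\la\bigl((B(u),u),(B(v),v)\bigr)=\Bigl(\{B(u)_\la B(v)\}_\Vs+\rep_\Ws(u)_\la B(v)-\rep_\Ws(v)_{-\la-\p}B(u),\;\{u_\la v\}_\Ws+\rep_\Vs(B(u))_\la v-\rep_\Vs(B(v))_{-\la-\p}u+\psi_{1,\la}(B(u),B(v))\Bigr).
\]

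An element $(x,y)\in\Ep$ lies in $\mathrm{Gr}(B)$ if and only if $x=B(y)$, and this applies coefficientwise in $\la$. So closure holds exactly when the $\Vs$-component equals $B$ applied to the $\Ws$-component, which is \eqref{eq:DmapII}.

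The only point that needs care is applying $B$ to the $\la$-polynomial $\Ws$-component, in particular to terms such as $\rep_\Vs(B(v))_{-\la-\p}u$ that involve $\p$. Here we use that $B$ is $\C$-linear and commutes with $\p$. So $B$ extends $\C[\la]$-linearly to $\Ws[\la]\to\Vs[\la]$ and commutes with the substitution $\la\mapsto-\la-\p$, so the comparison is legitimate coefficient by coefficient. No other obstacle is expected, since the argument is a direct unwinding of the definitions.
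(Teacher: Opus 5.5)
Your proposal is correct and follows essentially the same route as the paper: compute $\str_\la\bigl((B(u),u),(B(v),v)\bigr)$ via \eqref{eq:full-bracket} with $\psi_2=0$, then require the $\Vs$-component to equal $B$ applied to the $\Ws$-component. Your extra remarks are harmless refinements that the paper leaves implicit: $\mathrm{Gr}(B)$ is a $\Rg$-submodule because $B\p=\p B$, and the comparison is coefficientwise in $\la$.
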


\begin{proof}
For any $(B(u),u), (B(v),v) \in \mathrm{Gr}(B)$, we have
\begin{align*}
\Omega_\la((B(u),u), (B(v),v))=&(\{B(u)_\la B(v)\}_\Vs
  + \rep_\Ws(u)_\la B(v) - \rep_\Ws(v)_{-\la-\p}B(u),\\
  & \{u_\la v\}_\Ws + \rep_\Vs(B(u))_\la v
             - \rep_\Vs(B(v))_{-\la-\p}u
             + \psi_{1,\la}(B(u),B(v))).
\end{align*}
Then $B: \Ws\to\Vs$ is a left deformation map if and only if the $\Vs$-component equals
$B$ applied to the $\Ws$-component, which is precisely Eq. \eqref{eq:DmapII}.
\end{proof}
 
\begin{remark}
  The two types are related by inversion: if
$D:\Vs\to\Ws$ is an invertible right deformation map, then $D^{-1}:\Ws\to\Vs$ is a
left deformation map of the same quasi-twilled structure.
\end{remark}
 
\begin{example}[Relative Rota-Baxter operator]\label{ex:conf-RRB}
Let $\Ep=\Vs\ltimes_\rep\Ws$ with $\{u_\la v\}_\Ws=\la\{u_\la v\}_{\Ws,0}$,
$\rep_\Ws=0$, $\psi_1=0$.
A $\Rg$-module homomorphism $B:\Ws\to\Vs$ is a left deformation map if and only if
\begin{equation}\label{eq:conf-RRB}
  \{B(u)_\la B(v)\}_\Vs
  = B\!\bigl(\rep(B(u))_\la v - \rep(B(v))_{-\la-\p}u
             + \la\{u_\la v\}_\Ws\bigr),
  \quad \forall\,u,v\in\Ws,
\end{equation}
i.e.\ $B$ is a \textit{conformal relative Rota-Baxter operator of weight $\la$}
on $(\Vs,\{\cdot_\la\cdot\}_\Vs)$ with respect to the module $(\Ws;\rep)$.
For $\la=0$ this gives the conformal $\mathcal{O}$-operator.
\end{example}
 
\begin{example}[Conformal twisted Rota-Baxter operator]\label{ex:conf-TRB}
Let $\Ep=\Vs\ltimes_\phi\Mod$ with $\rep_\Ws=0$, $\{u_\la v\}_\Ws=0$,
$\psi_1=\phi\in\Cch^2(\Vs,\Mod)$ a $2$-cocycle.
A $\Rg$-module homomorphism $B:\Mod\to\Vs$ is a left deformation map if and only if
\begin{equation}\label{eq:conf-TRB}
  \{B(m)_\la B(n)\}_\Vs
  = B\!\bigl(\rep(B(m))_\la n - \rep(B(n))_{-\la-\p}m
             + \phi_\la(B(m),B(n))\bigr),
  \quad \forall\,m,n\in\Mod,
\end{equation}
i.e.\ $B$ is a \textit{conformal twisted Rota-Baxter operator}
associated to the cocycle $\phi$.
\end{example}
 
\begin{example}[Conformal Reynolds operator]\label{ex:conf-Reynolds}
Let $\Ep=\Vs\ltimes_{\ad,\{\cdot_\la\cdot\}_\Vs}\Vs$ with $\rep=\ad$,
$\{u_\la v\}_\Ws=\{u_\la v\}_\Vs$, $\psi_{1,\la}(a,b)=\{a_\la b\}_\Vs$.
A $\Rg$-module homomorphism $B:\Vs\to\Vs$ is a left deformation map\ if and only if
\begin{equation}\label{eq:conf-Reynolds}
  \{B(u)_\la B(v)\}_\Vs
  = B\!\bigl(\{B(u)_\la v\}_\Vs - \{B(v)_{-\la-\p}u\}_\Vs
             + \{B(u)_\la B(v)\}_\Vs\bigr),
  \quad \forall\,u,v\in\Vs,
\end{equation}
i.e.\ $B$ is a \textit{conformal Reynolds operator}.
\end{example}
 
\begin{example}[Deformation map of a matched pair of Lie conformal  algebras]
\label{ex:conf-matched-pair}
Let $\Ep=\Vs\bowtie\Ws$ be a twilled Lie conformal  algebra
(so $\psi_1=0$).  A $\Rg$-module homomorphism $B:\Ws\to\Vs$ is a
left deformation map\ if and only if
\begin{equation}\label{eq:conf-matched-map}
  \{B(u)_\la B(v)\}_\Vs
  + \rep_\Ws(u)_\la B(v) - \rep_\Ws(v)_{-\la-\p}B(u)
  = B\!\bigl(\{u_\la v\}_\Ws + \rep_\Vs(B(u))_\la v
             - \rep_\Vs(B(v))_{-\la-\p}u\bigr),
\end{equation}
i.e.\ $B$ is a \textit{deformation map of a matched pair of conformal
Lie algebras}.
\end{example}

%--------------------------------------------------------------------------
\subsection{Cohomology of left deformation maps}
\label{subsec:cohom-I}
%--------------------------------------------------------------------------

%======Induced structures===========
Let $(\Ep,\Vs,\Ws)$ be a quasi-twilled Lie conformal algebra and
$B: \Ws\to\Vs$ be a left deformation map. Define the map $\Bar{B}: \Ep \to \Ep,\ (a,u) \mapsto (B(u),0)$. It is  clear that $\Bar{B}^2=0$. 
Consider the map $I+\Bar{B}: \Ep \to \Ep,\ (a,u) \mapsto (a+B(u),u)$. It is an invertible $\mathbb{C}[\partial]$-homomorphism with inverse $I-\Bar{B}$. Then it induces a new Lie conformal algebraic structure on $\Ep$, given by 
\begin{align*}
\Omega^B_\lambda((a,u) ,(b,v))=(I-\Bar{B})\Omega_\lambda((I+\Bar{B})(a,u), (I+\Bar{B}(b,v)).     
\end{align*}

\begin{theorem}
Let $B: \Ws \to \Vs$ be left deformation map on a quasi-twilled Lie conformal algebra $(\Ep,\Vs,\Ws)$. Then the $\la$-product 
\begin{align}
 \{u_\la v\}^B_\Ws
  &= \{u_\la v\}_\Ws + \rep_\Vs(B(u))_\la v - \rep_\Vs(B(v))_{-\la-\p}u
     + \psi_{1,\la}(B(u),B(v)), 
  \label{eq:induced-bracket-II}   
\end{align}
is a Lie conformal algebra structure on $\Ws$. We denote it by $\Ws^B$. 
\end{theorem}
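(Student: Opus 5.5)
The plan is to mirror the proof of the analogous statement for right deformation maps, transporting the structure $\str$ along the $\Rg$-module automorphism $I+\Bar{B}$. The first step is to record that $\str^B$ is a Lie conformal algebra structure on $\Ep$. The map $I+\Bar{B}$ commutes with $\p$, since $B$ is a $\Rg$-module homomorphism. Its inverse is $I-\Bar{B}$, because $\Bar{B}^2=0$. Hence $\str^B_\la(x,y)=(I-\Bar{B})\str_\la((I+\Bar{B})x,(I+\Bar{B})y)$ inherits conformal sesquilinearity, skew-symmetry and the Jacobi identity from $\str$. In other words, $I+\Bar{B}:(\Ep,\str^B)\to(\Ep,\str)$ is an isomorphism of Lie conformal algebras by construction.

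The second step is to compute $\str^B$ on two elements $(0,u),(0,v)$ of $\Ws$. We have $(I+\Bar{B})(0,u)=(B(u),u)$. By the computation in the proof of Proposition~\ref{left Dmap identity}, $\str_\la((B(u),u),(B(v),v))$ has $\Ws$-component equal to the right-hand side of \eqref{eq:induced-bracket-II}, namely $\{u_\la v\}^B_\Ws$. Its $\Vs$-component is
\[
\{B(u)_\la B(v)\}_\Vs+\rep_\Ws(u)_\la B(v)-\rep_\Ws(v)_{-\la-\p}B(u).
\]
Applying $I-\Bar{B}$ subtracts $B$ of the $\Ws$-component from the $\Vs$-component. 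By the left deformation map identity \eqref{eq:DmapII}, this difference vanishes. Hence
\[
\str^B_\la((0,u),(0,v))=(0,\{u_\la v\}^B_\Ws).
\]

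The conclusion then follows. $\Ws\cong 0\oplus\Ws$ is closed under $\str^B$, so it is a Lie conformal subalgebra of $(\Ep,\str^B)$, and the restricted bracket is exactly $\{\cdot_\la\cdot\}^B_\Ws$. The axioms for $\{\cdot_\la\cdot\}^B_\Ws$ are therefore the restrictions of those of $\str^B$.

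The main obstacle is the second step, and there only the bookkeeping of $\Vs$- versus $\Ws$-components: unlike the right case, where the $\Vs$-summand survives, here the cancellation happens in the $\Vs$-component. I also need to check that the $-\la-\p$ arguments are handled consistently. This is harmless, because $B$ commutes with $\p$, so $B(\rep_\Vs(B(v))_{-\la-\p}u)$ is unambiguous.

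As a cross-check, one can give an alternative argument. By the previous paragraph, the map $u\mapsto(B(u),u)$ is a $\Rg$-module isomorphism $\Ws\to\mathrm{Gr}(B)$ intertwining $\{\cdot_\la\cdot\}^B_\Ws$ with the restriction of $\str$ to $\mathrm{Gr}(B)$. Since $\mathrm{Gr}(B)$ is a subalgebra by Definition~\ref{def:DmapII}, the statement follows directly.
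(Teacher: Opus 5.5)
Your proposal is correct and is essentially the paper's proof. As in the paper, you transport $\str$ along $I+\Bar{B}$, evaluate $\str^B$ on $(0,u),(0,v)$, use \eqref{eq:DmapII} to kill the $\Vs$-component, and read off $\{u_\la v\}^B_\Ws$ as the restricted bracket — and you are slightly more explicit than the paper in justifying that $\str^B$ is a Lie conformal structure (via $\p$-equivariance of $I\pm\Bar{B}$) and that closedness of $0\oplus\Ws$ under $\str^B$ is what lets the axioms descend.
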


\begin{proof}
Let $u,v \in \Ws$. Then we have
\begin{align*}
\Omega^B_\lambda((0,u) ,(0,v))=& (I-\Bar{B})\Omega_\lambda((I+\Bar{B})(0,u), (I+\Bar{B}(0,u))\\
=& (I-\Bar{B})\Omega_\lambda((B(u),u),(B(v),v)) \\
=&(I-\Bar{B})(\{B(u)_\la B(v)\}_\Vs
  + \rep_\Ws(u)_\la B(v) - \rep_\Ws(v)_{-\la-\p}B(u),\\
  & \{u_\la v\}_\Ws + \rep_\Vs(B(u))_\la v
             - \rep_\Vs(B(v))_{-\la-\p}u
             + \psi_{1,\la}(B(u),B(v))) \\
      =& (0,\{u_\la v\}_\Ws + \rep_\Vs(B(u))_\la v
             - \rep_\Vs(B(v))_{-\la-\p}u
             + \psi_{1,\la}(B(u),B(v))) \\ 
=& (0,\{u_\la v\}^B_\Ws).     
\end{align*}
Since $\Omega_\la$ defines a Lie conformal bracket then $\{\cdot_\la \cdot\}^D_\Vs$ so is. Hence $(\Ws,\{\cdot_\la \cdot\}^B_\Ws)$ is a Lie conformal algebra.

\end{proof}

\begin{theorem}
Let $(\Ep,\Vs,\Ws)$ be a quasi-twilled Lie conformal algebra and $B: \Ws \to \Vs$ be a left deformation map. Define the map $\rep^B_\Ws: \Ws \to Cend(\Vs)$ by
\begin{align}
 \rep^B_\Ws(u)_\la a
  =& \{B(u)_\la a\}_\Vs + \rep_\Ws(u)_\la a +B(\rep_\Vs(a)_{-\la-\p}u)-B(\psi_{1,\la}(B(u),a)).
 \label{eq:induced-rep-II}    
\end{align}
Then the pair $(\Vs, \rep^B_\Ws)$ is a representation of the Lie conformal algebra $\Ws^B$. 
\end{theorem}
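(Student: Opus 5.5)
The plan is to argue exactly as in the proof of the preceding theorem: transport the structure along $I+\Bar{B}$ and identify $\rep^B_\Ws$ with a quotient adjoint action. By construction $(\Ep,\Omega^B)$ is a Lie conformal algebra, since $\Omega^B$ is the transport of $\Omega$ by the invertible $\Rg$-module map $I+\Bar{B}$. The previous theorem shows that $\Omega^B_\la((0,u),(0,v))=(0,\{u_\la v\}^B_\Ws)$. Hence $\Ws=\{(0,u)\}$ is a Lie conformal subalgebra of $(\Ep,\Omega^B)$, isomorphic to $\Ws^B$.

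Next I would use the general fact that if $\mathfrak h$ is a subalgebra of a Lie conformal algebra $\mathfrak g$, then the quotient $\Rg$-module $\mathfrak g/\mathfrak h$ is an $\mathfrak h$-module via $h_\la\cdot\bar x:=\overline{\{h_\la x\}}$.
\begin{itemize}
\item It is well defined because $\{\mathfrak h_\la\mathfrak h\}\subset\mathfrak h[\la]$.
\item Sesquilinearity in both arguments follows from \eqref{eq:sesqui}.
\item The module identity \eqref{eq:module} is the Jacobi identity \eqref{Jacobi identity} for $h_1,h_2\in\mathfrak h$ and $x\in\mathfrak g$, read modulo $\mathfrak h$.
\end{itemize}
Since $\Ep=\Vs\oplus\Ws$ as $\Rg$-modules, we have $\Ep/\Ws\cong\Vs$ via $\mathrm{pr}_\Vs$. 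The resulting action of $\Ws^B$ on $\Vs$ is therefore $u_\la a=\mathrm{pr}_\Vs\,\Omega^B_\la((0,u),(a,0))$.

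The remaining step is to check that this action coincides with \eqref{eq:induced-rep-II}. We have $(I+\Bar{B})(0,u)=(B(u),u)$ and $(I+\Bar{B})(a,0)=(a,0)$. By \eqref{eq:full-bracket} with $\psi_2=0$,
\[
\Omega_\la((B(u),u),(a,0))=\bigl(\{B(u)_\la a\}_\Vs+\rep_\Ws(u)_\la a,\ -\rep_\Vs(a)_{-\la-\p}u+\psi_{1,\la}(B(u),a)\bigr).
\]
Applying $I-\Bar{B}$ subtracts $B$ of the $\Ws$-component from the $\Vs$-component. The $\Vs$-component then becomes
\[
\{B(u)_\la a\}_\Vs+\rep_\Ws(u)_\la a+B(\rep_\Vs(a)_{-\la-\p}u)-B(\psi_{1,\la}(B(u),a)),
\]
which is precisely $\rep^B_\Ws(u)_\la a$. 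Here $B$ commutes with $\p$ and passes through the polynomial dependence on $\la$.

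The main point needing care is the quotient-module step. $\Vs$ is generally not a subalgebra or an ideal for $\Omega^B$, so one cannot simply restrict the bracket. The $\Ws$-component of $\Omega^B_\la((0,u),(a,0))$ must be discarded consistently, and this is legitimate only because $\Ws$ is a subalgebra for $\Omega^B$. I would therefore write out the Jacobi identity for $(0,u),(0,v),(a,0)$ and project to $\Vs$. The terms in which a bracket of two elements of $\Ws$ produces a $\Ws$-component contribute nothing after projection. What remains is $\rep^B_\Ws(u)_\la\rep^B_\Ws(v)_\mu-\rep^B_\Ws(v)_\mu\rep^B_\Ws(u)_\la=\rep^B_\Ws(\{u_\la v\}^B_\Ws)_{\la+\mu}$. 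Finally, $\rep^B_\Ws(\p u)_\la=-\la\rep^B_\Ws(u)_\la$ follows from sesquilinearity of $\Omega^B$.
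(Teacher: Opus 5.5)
Your proposal is correct and follows the paper's proof. The paper also notes that $\Ws$ is a subalgebra for the transported bracket $\Omega^B$. It then uses the fact that in a quasi-twilled Lie conformal algebra the $\Vs$-projection of the bracket makes $\Vs$ a module over $\Ws$, which is exactly your quotient module $\Ep/\Ws\cong\Vs$. Finally it computes $\mathrm{pr}_\Vs\,\Omega^B_\la((0,u),(a,0))$ just as you do. Your check of the module axioms via the Jacobi identity modulo $\Ws$ spells out the step the paper takes for granted.
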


\begin{proof}
We have already seen that  $(\Ep,\Vs,\Ws,\Omega^B)$ is a quasi-twilled Lie conformal algebra in which $\Ws$ is a  subalgebra   (and the induced Lie conformal algebra structure is $\Ws^B$).     Moreover, it follows that
the Lie conformal  algebra $\Ws^B$ has a representation on the $\mathbb{C}[\partial]$-module $\Vs$ with the action map given by
\begin{align*}
&pr_{\Vs}(\Omega^B_\la((0,u),(a,0))\\=& pr_{\Vs}((I-\Bar{B})(\Omega_\la((B(u),u),(a,0))) \\
=&  pr_{\Ws}((I-\Bar{D}) \bigl(
\{B(u)_\la a\}_\Vs + \rep_\Ws(u)_\la a,
- \rep_\Vs(a)_{-\la-\p}u + \psi_{1,\la}(B(u),a)
\bigr)\\
=&(\{B(u)_\la a\}_\Vs + \rep_\Ws(u)_\la a +B(\rep_\Vs(a)_{-\la-\p}u)-B(\psi_{1,\la}(B(u),a)), - \rep_\Vs(a)_{-\la-\p}u + \psi_{1,\la}(B(u),a) )   \\
=&  \{B(u)_\la a\}_\Vs + \rep_\Ws(u)_\la a +B(\rep_\Vs(a)_{-\la-\p}u)-B(\psi_{1,\la}(B(u),a)).           
\end{align*}
This completes the proof. 
\end{proof}

Let $\delta^B:\Cch^k(\Ws,\Vs)\to\Cch^{k+1}(\Ws,\Vs)$ be the
Chevalley-Eilenberg coboundary of $(\Ws,\{\cdot_\la\cdot\}^B_\Ws)$
with coefficients in $(\Vs; \rep^B_\Ws)$.  For $f\in\Cch^k(\Ws,\Vs)$ and
$u_1,\ldots,u_{k+1}\in\Ws$:
\begin{equation}\label{eq:CE-B-II}
\begin{split}
  (\delta^B f)_{\la_1,\ldots,\la_k}
  &(u_1,\ldots,u_{k+1}) \\
  &= \sum_{i=1}^{k}(-1)^{i+1}
     \rep^B_\Ws(u_i)_{\la_i}
     f_{\la_1,\ldots,\widehat{\la_i},\ldots \la_k}(u_1,\ldots,\widehat{u_i},\ldots,u_{k+1})
     %\big|_{\la_{k+1}\mapsto\la_{k+1}^\ddag} 
     \\
  &\quad
    + \sum_{i=1}^{k}(-1)^i
    f_{\la_1,\ldots,\widehat{\la_i},\ldots \la_k}
      (u_1,\ldots,\widehat{u_i},\ldots,u_k,
       \{u_{i\,\la_i} u_{k+1}\}^B_\Ws)
      %\big|_{\la_{k+1}\mapsto\la_{k+1}^\ddag} 
      \\
  &\quad
    + \!\!\sum_{1\leq i<j\leq k+1}\!\!(-1)^{k+i+j+1}
      f_{\la_1,\ldots,\widehat{\la_i},\ldots ,\widehat{\la_j},\ldots, \la_k,\la_{k+1}^\ddag}
      (u_1,\ldots,u_k,\{u_{i\,\la_i}u_j\}^B_\Ws)
      %\big|_{\la_{k+1}\mapsto\la_{k+1}^\ddag} 
      \\
  &\quad
    + (-1)^k\rep^B_\Ws(u_{k+1})_{\la_{k+1}^\ddag}
      f_{\la_1,\ldots,\la_{k-1}}(u_1,\ldots,u_k)
.
\end{split}
\end{equation}
Expanding via Eqs. \eqref{eq:induced-bracket-II}-\eqref{eq:induced-rep-II}
gives, for all $u_1,\ldots,u_{k+1}\in\Ws$:

\begin{align*}
 & (\delta^B f)_{\la_1,\ldots,\la_k}
  (u_1,\ldots,u_{k+1}) \\
 = & \sum_{i=1}^{k}(-1)^{i+1}
     \rep_\Ws(u_i)_{\la_i}
     f_{\la_1,\ldots,\widehat{\la_i},\ldots \la_k}(u_1,\ldots,\widehat{u_i},\ldots,u_{k+1})
     %\big|_{\la_{k+1}\mapsto\la_{k+1}^\ddag} 
     \\
  &+\sum_{i=1}^{k}(-1)^{i+1}
     \{B(u_i)_{\la_i}
     f_{\la_1,\ldots,\widehat{\la_i},\ldots \la_k}(u_1,\ldots,\widehat{u_i},\ldots,u_{k+1})\}_\Vs
     %\big|_{\la_{k+1}\mapsto\la_{k+1}^\ddag} 
     \\
  &+\sum_{i=1}^{k}(-1)^{i+1}
    B (\rep_\Vs(f_{\la_1,\ldots,\widehat{\la_i},\ldots \la_k}(u_1,\ldots,\widehat{u_i},\ldots,u_{k+1}))_{\la_i}u_i
%\big|_{\la_{k+1}\mapsto\la_{k+1}^\ddag} 
     \\
  &-\sum_{i=1}^{k}(-1)^{i+1}
    B( \psi_{1,\la_i}(B(u_i),
     f_{\la_1,\ldots,\widehat{\la_i},\ldots \la_k}(u_1,\ldots,\widehat{u_i},\ldots,u_{k+1}))
     %\big|_{\la_{k+1}\mapsto\la_{k+1}^\ddag} 
     \\
  &
    + \sum_{i=1}^{k}(-1)^i
    f_{\la_1,\ldots,\widehat{\la_i},\ldots \la_k}
      (u_1,\ldots,\widehat{u_i},\ldots,u_k,
       \{u_{i\,\la_i} u_{k+1}\}_\Ws)
      %\big|_{\la_{k+1}\mapsto\la_{k+1}^\ddag} 
      \\&
    + \sum_{i=1}^{k}(-1)^i
    f_{\la_1,\ldots,\widehat{\la_i},\ldots \la_k}
      (u_1,\ldots,\widehat{u_i},\ldots,u_k,
      \psi_{1,\la_i} (u_{i}, u_{k+1}))
      %\big|_{\la_{k+1}\mapsto\la_{k+1}^\ddag} 
      \\&
    + \sum_{i=1}^{k}(-1)^i
    f_{\la_1,\ldots,\widehat{\la_i},\ldots \la_k}
      (u_1,\ldots,\widehat{u_i},\ldots,u_k,
       \rep_\Vs(B(u_i))_{\la_i}u_{k+1}-\rep_\Vs(B(u_{k+1}))_{-\la_i-\partial}u_i)
      %\big|_{\la_{k+1}\mapsto\la_{k+1}^\ddag} 
      \\
  &
    + \!\!\sum_{1\leq i<j\leq k}\!\!(-1)^{k+i+j+1}
      f_{\la_1,\ldots,\widehat{\la_i},\ldots ,\widehat{\la_j},\ldots, \la_k,\la_{k+1}^\ddag}
      (u_1,\ldots,u_k,\{u_{i\,\la_i}u_j\}_\Ws)\\
  &
    + \!\!\sum_{1\leq i<j\leq k}\!\!(-1)^{k+i+j+1}
      f_{\la_1,\ldots,\widehat{\la_i},\ldots ,\widehat{\la_j},\ldots, \la_k,\la_{k+1}^\ddag}
      (u_1,\ldots,u_k,\psi_{1,\la_i} (u_{i}, u_{k+1}))\\
  &
    + \!\!\sum_{1\leq i<j\leq k}\!\!(-1)^{k+i+j+1}
      f_{\la_1,\ldots,\widehat{\la_i},\ldots ,\widehat{\la_j},\ldots, \la_k,\la_{k+1}^\ddag}
    (u_1,\ldots,u_k,\rep_\Vs(B(u_i))_{\la_i}u_j-\rep_\Vs(B(u_j))_{-\la_i-\partial}u_i)
      %\big|_{\la_{k+1}\mapsto\la_{k+1}^\ddag} 
      \\&+
      (-1)^{k}
     \rep_\Ws(u_i)_{k+1})_{\la_{k+1}^\ddag}
      f_{\la_1,\ldots,\la_{k-1}}(u_1,\ldots,u_{k})
     %\big|_{\la_{k+1}\mapsto\la_{k+1}^\ddag} 
     +(-1)^{k}
     \{B(u_{k+1})_{\la_{k+1}^\ddag}
      f_{\la_1,\ldots,\la_{k-1}}(u_1,\ldots,u_{k+})\}_\Vs
     %\big|_{\la_{k+1}\mapsto\la_{k+1}^\ddag} 
     \\
  &+(-1)^{k}
    B (\rep_\Vs(
      f_{\la_1,\ldots,\la_{k-1}}(u_1,\ldots,u_{k}))_{\la_{k+1}^\ddag}u_i
%\big|_{\la_{k+1}\mapsto\la_{k+1}^\ddag} 
     -(-1)^{k}
    B( \psi_{1,\la_{k+1}^\ddag}(B(u_{k+1}),
     f_{\la_1,\ldots,\la_{k-1}}(u_1,\ldots,u_{k}))
     %\big|_{\la_{k+1}\mapsto\la_{k+1}^\ddag} 
.
\end{align*}

\begin{definition}\label{def:cohom-II}
Let $B:\Ws\to\Vs$ be a left deformation map\ of $(\Ep,\Vs,\Ws)$.  Set $C^0(B):=0$,
$C^1(B):=\Vs$, and $C^n(B):=\Cch^{n-1}(\Ws,\Vs)$ for $n\geq2$.
The \textit{cohomology of the left deformation map\ $B$} is the cohomology of the
cochain complex $\bigl(\bigoplus_{i\geq0}C^i(B),\,\delta^B\bigr)$, with
cohomology groups $H^n(B)$ for $n\geq0$.
\end{definition}
 
An element $a\in\Vs=C^1(B)$ is a $1$-cocycle if and only if
\[
  -\rep_\Ws(u)_\la a + \{B(u)_\la a\}_\Vs
  + B(\rep_\Vs(a)_\la u) - B(\psi_{1,\la}(B(u),a)) = 0,
  \quad \forall\,u\in\Ws,
\]
and $f\in\Cch^1(\Ws,\Vs)=C^2(B)$ is a $2$-cocycle if and only if
\begin{align*}
  &-\rep_\Ws(u)_\la f(v)+\rep_\Ws(v)_{-\la-\p}f(u)
  +\{B(u)_\la f(v)\}_\Vs-\{B(v)_{-\la-\p}f(u)\}_\Vs\\
  &\quad +B(\rep_\Vs(f(v))_\la u)-B(\rep_\Vs(f(u))_{-\la-\p}v)
  -B(\psi_{1,\la}(B(u),f(v)))+B(\psi_{1,\la}(B(v),f(u)))\\
  &= f(\{u_\la v\}_\Ws+\psi_{1,\la}(B(u),B(v)))
     +f(\rep_\Vs(B(u))_\la v-\rep_\Vs(B(v))_{-\la-\p}u),
\end{align*}
for all $u,v\in\Ws$.
 
%\begin{proposition}\label{prop:l1B-CE-II}
%For any $f\in\Cch^k(\Ws,\Vs)$, one has
%$l_1^B(f) = (-1)^{k-1}\delta^B f.$
%In particular, $H^1(B)$ classifies infinitesimal deformations of $B$.
%\end{proposition}
 
%\begin{proof}
%By \eqref{eq:l1B-II} and Theorem~\ref{thm:control-II},
%$l_1^B(f) = [\smv,\widehat{f}]_{NR}
%+ [[\smu,\widehat{B}]_{NR},\widehat{f}]_{NR}
%+ \frac{1}{2}[[[\shf,\widehat{B}]_{NR},\widehat{B}]_{NR},\widehat{f}]_{NR}
%= [\smv^B,\widehat{f}]_{NR} = (-1)^{k-1}\delta^B f$.
%\end{proof}
 
Definition~\ref{def:cohom-II} unifies the cohomologies of  special cases.
 
\begin{example}[Cohomology of a relative Rota-Baxter operator]\label{ex:coh conf-RRB}
In the setting of Example~\ref{ex:conf-RRB}, the induced bracket is
\begin{align*}
 \{u_\la v\}^B_\Ws = \la\{u_\la v\}_\Ws + \rep_\Vs(B(u))_\la v
- \rep_\Vs(B(v))_{-\la-\p}u,   
\end{align*}
and the module action is
\begin{align*}
 \rep_{\Ws}^B(u)_\la a = \{B(u)_\la a\}_\Vs + B(\rep_\Vs(a)_\la u).  
\end{align*}
The cohomology is given in \cite{YL}.
\end{example}
 
\begin{example}[Cohomology of a conformal twisted Rota-Baxter operator]
In the setting of Example~\ref{ex:conf-TRB}, the induced bracket is
$\{u_\la v\}^B_\Mod = \rep_\Vs(B(u))_\la v - \rep_\Vs(B(u))_{-\la-\p}v
+ \phi_\la(B(u),B(v))$, and the module action is
$\rep_\Ws^B(u)_\la a = \{B(u)_\la a\}_\Vs + B(\rep_\Vs(a)_\la u)
- B(\phi_\la(B(u),a))$.
The cohomology of Definition~\ref{def:cohom-II} is the
\textit{cohomology for a conformal twisted Rota-Baxter operator} \cite{YL}.
\end{example}
 
\begin{example}[Cohomology of a conformal Reynolds operator]
In the setting of Example~\ref{ex:conf-Reynolds}, the induced bracket
is $\{u_\la v\}^B_\Vs = \{B(u)_\la v\}_\Vs - \{B(v)_{-\la-\p}u\}_\Vs
+ \{B(u)_\la B(v)\}_\Vs$, and the module action is
$\rep_\Ws^B(u)_\la a = \{B(u)_\la a\}_\Vs + B(\{a_\la u\}_\Vs)
- B(\{B(u)_\la a\}_\Vs)$.
The cohomology of Definition~\ref{def:cohom-II} is the
\textit{cohomology for a conformal Reynolds operator} \cite{YL}.
\end{example}
 
\begin{example}\label{def:cohom-matched}
Consider Example~\ref{ex:conf-matched-pair}.  Let $B:\Ws\to\Vs$ be a
deformation map of the matched pair $(\Vs,\Ws;\rep_\Vs,\rep_\Ws)$ of
Lie conformal  algebras.  The \textit{cohomology for the deformation map
$B$ of a matched pair of Lie conformal  algebras} is the
Chevalley-Eilenberg cohomology of $(\Ws,\{\cdot_\la\cdot\}^B_\Ws)$
with coefficients in the module $(\Vs;\rep_\Ws^B)$, where
\[
  \rep_\Ws^B(u)_\la a
  = \rep_\Ws(u)_\la a + \{B(u)_\la a\}_\Vs + B(\rep_\Vs(a)_\la u).
\]
The coboundary operators are defined as,  for all $u_1,\ldots,u_{k+1}\in\Ws$:

\begin{align}\label{Coh: cohom-matched}
\begin{split}
 & (\delta^B f)_{\la_1,\ldots,\la_k}
  (u_1,\ldots,u_{k+1}) \\
 = & \sum_{i=1}^{k}(-1)^{i+1}
     \rep_\Ws(u_i)_{\la_i}
     f_{\la_1,\ldots,\widehat{\la_i},\ldots \la_k}(u_1,\ldots,\widehat{u_i},\ldots,u_{k+1})
     %\big|_{\la_{k+1}\mapsto\la_{k+1}^\ddag} 
     \\
  &+\sum_{i=1}^{k}(-1)^{i+1}
     \{B(u_i)_{\la_i}
     f_{\la_1,\ldots,\widehat{\la_i},\ldots \la_k}(u_1,\ldots,\widehat{u_i},\ldots,u_{k+1})\}_\Vs
     %\big|_{\la_{k+1}\mapsto\la_{k+1}^\ddag} 
     \\
  &+\sum_{i=1}^{k}(-1)^{i+1}
    B (\rep_\Vs(f_{\la_1,\ldots,\widehat{\la_i},\ldots \la_k}(u_1,\ldots,\widehat{u_i},\ldots,u_{k+1}))_{\la_i}u_i
%\big|_{\la_{k+1}\mapsto\la_{k+1}^\ddag} 
     \\
  &
    + \sum_{i=1}^{k}(-1)^i
    f_{\la_1,\ldots,\widehat{\la_i},\ldots \la_k}
      (u_1,\ldots,\widehat{u_i},\ldots,u_k,
       \{u_{i\,\la_i} u_{k+1}\}_\Ws)
      %\big|_{\la_{k+1}\mapsto\la_{k+1}^\ddag} 
      \\&
    + \sum_{i=1}^{k}(-1)^i
    f_{\la_1,\ldots,\widehat{\la_i},\ldots \la_k}
      (u_1,\ldots,\widehat{u_i},\ldots,u_k,
      \psi_{1,\la_i} (u_{i}, u_{k+1}))
      %\big|_{\la_{k+1}\mapsto\la_{k+1}^\ddag} 
      \\&
    + \sum_{i=1}^{k}(-1)^i
    f_{\la_1,\ldots,\widehat{\la_i},\ldots \la_k}
      (u_1,\ldots,\widehat{u_i},\ldots,u_k,
       \rep_\Vs(B(u_i))_{\la_i}u_{k+1}-\rep_\Vs(B(u_{k+1}))_{-\la_i-\partial}u_i)
      %\big|_{\la_{k+1}\mapsto\la_{k+1}^\ddag} 
      \\
  &
    + \!\!\sum_{1\leq i<j\leq k}\!\!(-1)^{k+i+j+1}
      f_{\la_1,\ldots,\widehat{\la_i},\ldots ,\widehat{\la_j},\ldots, \la_k,\la_{k+1}^\ddag}
      (u_1,\ldots,u_k,\{u_{i\,\la_i}u_j\}_\Ws)\\
  &
    + \!\!\sum_{1\leq i<j\leq k}\!\!(-1)^{k+i+j+1}
      f_{\la_1,\ldots,\widehat{\la_i},\ldots ,\widehat{\la_j},\ldots, \la_k,\la_{k+1}^\ddag}
      (u_1,\ldots,u_k,\psi_{1,\la_i} (u_{i}, u_{k+1}))\\
  &
    + \!\!\sum_{1\leq i<j\leq k}\!\!(-1)^{k+i+j+1}
      f_{\la_1,\ldots,\widehat{\la_i},\ldots ,\widehat{\la_j},\ldots, \la_k,\la_{k+1}^\ddag}
    (u_1,\ldots,u_k,\rep_\Vs(B(u_i))_{\la_i}u_j-\rep_\Vs(B(u_j))_{-\la_i-\partial}u_i)
      %\big|_{\la_{k+1}\mapsto\la_{k+1}^\ddag} 
      \\&+
      (-1)^{k}
     \rep_\Ws(u_i)_{k+1})_{\la_{k+1}^\ddag}
      f_{\la_1,\ldots,\la_{k-1}}(u_1,\ldots,u_{k})
     %\big|_{\la_{k+1}\mapsto\la_{k+1}^\ddag} 
     +(-1)^{k}
     \{B(u_{k+1})_{\la_{k+1}^\ddag}
      f_{\la_1,\ldots,\la_{k-1}}(u_1,\ldots,u_{k+})\}_\Vs
     %\big|_{\la_{k+1}\mapsto\la_{k+1}^\ddag} 
     \\
  &+(-1)^{k}
    B (\rep_\Vs(
      f_{\la_1,\ldots,\la_{k-1}}(u_1,\ldots,u_{k}))_{\la_{k+1}^\ddag}u_i
%\big|_{\la_{k+1}\mapsto\la_{k+1}^\ddag} 
.\end{split}
\end{align}
So far, this result provides a new cohomology.
\end{example}
 
\begin{remark}
In the matched pair case, the Lie conformal  algebra structure on $\Ws$
induced by $B$ can equivalently be obtained by transferring the
Lie conformal  algebra structure on $\mathrm{Gr}(B)$ to $\Ws$ via the
$\Rg$-module isomorphism $u\mapsto(B(u),u)$.
\end{remark}
 
\begin{remark}
The second cohomology group $H^2(B)$ in Definition~\ref{def:cohom-II}
(resp.\ Definition~\ref{def:cohom-matched}) classifies and controls
infinitesimal deformations of the left deformation map\ $B$, extending to the
conformal setting the results of \cite{Das0,Das1,JSZ,TBGS}.
\end{remark}

%--------------------------------------------------------------------------
\subsection{The controlling $L_\infty$-algebra of left deformation maps}
\label{subsec:control-II}
%--------------------------------------------------------------------------
 
\begin{theorem}\label{thm:control-II}
Let $(\Ep,\Vs,\Ws)$ be a quasi-twilled Lie conformal  algebra with
$\str=\shf+\smu+\smv$.  Set
\[
  L := \bigoplus_{n\geq0}\Cch^{n+1}(\Ep,\Ep), \qquad
  F := \bigoplus_{n\geq0}\Cch^{n+1}(\Ws,\Vs),
\]
let $\mathbf{P}:L\to L$ be the projection onto $F$, and set
$\Delta:=\str$.  Then $(L,F,\mathbf{P},\Delta)$ is a $V$-data, and
the resulting $L_\infty$-algebra on $F$ is
\[
  \Bigl(\bigoplus_{n\geq0}\Cch^{n+1}(\Ws,\Vs),\; l_1,\, l_2,\, l_3\Bigr),
\]
with
\begin{align}
  l_1(f)
  &= [\smv,\,\widehat{f}\,]_{NR},
  \label{eq:l1-II}\\
  l_2(f_1,f_2)
  &= [[\smu,\,\widehat{f}_1]_{NR},\,\widehat{f}_2]_{NR},
  \label{eq:l2-II}\\
  l_3(f_1,f_2,f_3)
  &= [[[\shf,\,\widehat{f}_1]_{NR},\,\widehat{f}_2]_{NR},\,\widehat{f}_3]_{NR},
  \label{eq:l3-II}\\
  l_k &= 0, \quad k\geq4.
\end{align}
A $\Rg$-module homomorphism $B:\Ws\to\Vs$ is a left deformation map\ of
$(\Ep,\Vs,\Ws)$ if and only if $B$ is a Maurer-Cartan element:
\begin{equation}\label{eq:MC-II}
  l_1(B) + \tfrac{1}{2}l_2(B,B) + \tfrac{1}{6}l_3(B,B,B) = 0.
\end{equation}
\end{theorem}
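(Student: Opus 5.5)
The plan mirrors the proof of Theorem~\ref{thm:control-I}, with the roles of $\Vs$ and $\Ws$ exchanged, and then reads off the Maurer--Cartan equation by bidegree. Elements of $F=\bigoplus_{n\ge0}\Cch^{n+1}(\Ws,\Vs)$ are lifted to cochains on $\Ep$ of bidegree $n\mid-1$. By Lemma~\ref{lem:FN-zero}, any two such lifts have vanishing NR bracket, so $F$ is an abelian graded Lie subalgebra of $(L,[\cdot,\cdot]_{NR})$.

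\textbf{Step 1: the $V$-data.} The complement $\ker\mathbf{P}$ consists of sums of bihomogeneous cochains whose bidegree is not of the form $\bullet\mid-1$. Since the second entries of bidegrees are additive (Lemma~\ref{lem:FN-bigrade}) and are $\ge -1$, a bracket of two elements with second entry $\ge0$ again has second entry $\ge0$. Hence $\ker\mathbf{P}$ is a graded Lie subalgebra. The components $\shf$, $\smu$, $\smv$ of $\str$ have bidegrees $2\mid-1$, $1\mid0$ and $0\mid1$. Read in the $\Ws\to\Vs$ convention, $\shf$ takes $\Vs\otimes\Vs$ to $\Ws$, so it is not an element of $F$ (whose second entry is $-1$ with output in $\Vs$). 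Indeed $\psi_2=0$ is exactly what forces $\mathbf{P}(\str)=0$. Thus $\str\in\ker(\mathbf{P})^1$ with $[\str,\str]_{NR}=0$, and we have a genuine $V$-data. I expect this bidegree bookkeeping, matching the convention of Definition~\ref{def:bidegree} to the identification of $\Cch(\Ws,\Vs)$ and checking that $\psi_1$ never projects into $F$, to be the main subtle point.

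\textbf{Step 2: the brackets.} By Theorem~\ref{thm:Voronov}, $l_k(f_1,\dots,f_k)=\mathbf{P}[\cdots[\str,\widehat f_1]_{NR},\dots,\widehat f_k]_{NR}$. Each bracket with some $\widehat f_i$ lowers the second entry of the bidegree by one. So after $k$ brackets, $\smv$ (second entry $1$), $\smu$ (second entry $0$) and $\shf$ (second entry $-1$) land in second entry $1-k$, $-k$ and $-1-k$ respectively. Only those landing in second entry $-1$ survive the projection. This gives $l_1=[\smv,\cdot]$, $l_2=[[\smu,\cdot],\cdot]$ and $l_3=[[[\shf,\cdot],\cdot],\cdot]$. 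For $k\ge4$, every term has second entry $\le -2$ and therefore vanishes, since no nonzero cochain has such a bidegree.

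\textbf{Step 3: Maurer--Cartan equation.} For $B\in\Cch^1(\Ws,\Vs)$, I evaluate each term of \eqref{eq:MC-II} on $(0,u),(0,v)$ using the explicit pre-bracket. The expected contributions are as follows. The term $[\smv,\widehat B]_{NR}$ gives $\rep_\Ws(u)_\la B(v)-\rep_\Ws(v)_{-\la-\p}B(u)-B(\{u_\la v\}_\Ws)$, up to a global sign. The term $\tfrac12[[\smu,\widehat B],\widehat B]$ gives $\{B(u)_\la B(v)\}_\Vs-B(\rep_\Vs(B(u))_\la v-\rep_\Vs(B(v))_{-\la-\p}u)$. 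The term $\tfrac16[[[\shf,\widehat B],\widehat B],\widehat B]$ gives $-B(\psi_{1,\la}(B(u),B(v)))$. The factors $\tfrac12$ and $\tfrac16$ cancel the $2$ and $6$ coming from the symmetric expansions. Summing yields precisely \eqref{eq:DmapII}, and Proposition~\ref{left Dmap identity} concludes. The sign and coefficient verification in this last computation, in particular the $\la_{k}^\ddag$ substitutions, is routine but lengthy. I would check it by comparing with the classical computation in the Lie algebra case.
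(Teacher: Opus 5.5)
Your route is the paper's: check the $V$-data conditions, read off $l_1,l_2,l_3$ from Voronov's formula by bidegree counting, and expand the Maurer--Cartan equation on $(0,u),(0,v)$. However, the bidegree bookkeeping in Steps 1--2 is wrong, and you yourself flagged it as the delicate point. As written, it proves a different statement.

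By Definition~\ref{def:bidegree}, the lift of $f\in\Cch^{n+1}(\Ws,\Vs)$ has $n+1$ inputs in $\Ws$ and output in $\Vs$, so its bidegree is $-1\mid n+1$. Thus $F$ is the part with \emph{first} entry $-1$. Bidegree $n\mid -1$ is in the wrong slot and has the wrong arity: it is the bidegree of $\Cch^{n}(\Vs,\Ws)$.

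Taken literally, your labels break the argument in three places.
\begin{itemize}
\item $\shf$ has bidegree $2\mid -1$, so it would lie in $F$. Then $\mathbf{P}(\str)=\shf\neq 0$, and you would only have curved $V$-data.
\item In your count, the second entries $1,0,-1$ of $\smv,\smu,\shf$ reach $-1$ after $2,1,0$ brackets. That yields $l_0=\shf$, $l_1=[\smu,\cdot]_{NR}$ and $l_2=[[\smv,\cdot]_{NR},\cdot]_{NR}$. This is the curved structure of Theorem~\ref{thm:control-I}, not the asserted $l_1,l_2,l_3$.
\item $[\smv,\widehat B]_{NR}$ would have bidegree $0\mid 0$ and would be killed by $\mathbf{P}$, contradicting your Step 3.
\end{itemize}

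The repair is to run the same argument on the first entry.
\begin{itemize}
\item First entries add under $[\cdot,\cdot]_{NR}$ (Lemma~\ref{lem:FN-bigrade}), and nonzero cochains have entries $\geq -1$. Hence $\ker\mathbf{P}$, the part with first entry $\geq 0$, is a subalgebra.
\item $\mathbf{P}(\str)=\widehat\psi_2=0$. Your remark that $\psi_2=0$ is what matters is correct, but only in this convention.
\item Each bracket with some $\widehat f_i$ lowers the first entry by one. Since $\smv,\smu,\shf$ have first entries $0,1,2$, they land in $F$ after exactly $1,2,3$ brackets.
\item After $k\geq 4$ brackets every term has first entry $\leq -2$ and is zero. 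This is a sounder reason for $l_k=0$, $k\geq 4$, than the paper's appeal to abelianness of $F$.
\end{itemize}

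Your Step 3 is fine as stated. The three contributions you list are the linear, quadratic and cubic parts in $B$ of the $\Vs$-component of $\Omega^B_\la((0,u),(0,v))$, which is how the paper phrases the conclusion ($\psi^B_{2,\la}(u,v)=0$). A direct check in the classical case produces exactly the factors $2$ and $6$ you anticipate, with all three terms entering with sign $+$.
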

 
\begin{proof}
Since $\str\in\ker(\mathbf{P})^1$ (as $\str$ has no component in
$F=\bigoplus\Cch^{n+1}(\Ws,\Vs)$) and $[\str,\str]_{NR}=0$, the data
$(L,F,\mathbf{P},\str)$ is a $V$-data.  By Theorem~\ref{thm:Voronov}:
\begin{align*}
  l_1(f) &= \mathbf{P}([\str,\widehat{f}]_{NR}) = [\smv,\widehat{f}]_{NR},\\
  l_2(f_1,f_2) &= \mathbf{P}([[\str,\widehat{f}_1]_{NR},\widehat{f}_2]_{NR})
               = [[\smu,\widehat{f}_1]_{NR},\widehat{f}_2]_{NR},\\
  l_3(f_1,f_2,f_3) &= \mathbf{P}([[[\str,\widehat{f}_1]_{NR},\widehat{f}_2]_{NR},
               \widehat{f}_3]_{NR})
               = [[[\shf,\widehat{f}_1]_{NR},\widehat{f}_2]_{NR},\widehat{f}_3]_{NR},
\end{align*}
and $l_k=0$ for $k\geq4$ by abelianness of $F$.  Expanding
\eqref{eq:MC-II} via \eqref{eq:l1-II}-\eqref{eq:l3-II} gives exactly
$\psi^B_{2,\la}(u,v)=0$, which is \eqref{eq:DmapII}.
\end{proof}
 
\begin{corollary}[Controlling algebra for weighted relative Rota-Baxter operators]
\label{cor:control-RRB}
In the setting of Example~\ref{ex:conf-RRB}, the $L_\infty$-algebra
\eqref{eq:l1-II}-\eqref{eq:l3-II} degenerates ($\shf=0$) to a
differential graded Lie algebra
$\bigl(\bigoplus_{n\geq1}\Cch^n(\Ws,\Vs),\,\dM,\,\langle\cdot,\cdot\rangle\bigr)$,
where the differential $\dM:\Cch^p(\Ws,\Vs)\to\Cch^{p+1}(\Ws,\Vs)$ is
\begin{equation}\label{eq:diff-RRB}
  (\dM f)_{\la_1,\ldots,\la_p}(u_1,\ldots,u_{p+1})
  = \sum_{i<j}(-1)^{p+i+j-1}\la\,
    f_{\la_i+\la_j,\widehat{\la_i},\widehat{\la_j}}
    (\{u_i{}_{\la_i}u_j\}_\Ws,\ldots)
    \big|_{\la_{p+1}^\ddag},
\end{equation}
and the graded bracket $\langle\cdot,\cdot\rangle$ is given by
\begin{equation}\label{eq:bracket-RRB}
\begin{split}
  \langle f_1,f_2\rangle_{\boldsymbol\la}(u_1,\ldots,u_{p+q})
  &= -\!\!\sum_{\sigma\in\perm_{(q,1,p-1)}}(-1)^\sigma
     f_1\!\bigl(\rep(f_2(u_{\sigma(1)},\ldots,u_{\sigma(q)}))_{\la_{\sigma(q+1)}}
     u_{\sigma(q+1)},\ldots\bigr)\big|_{\la_{p+q}^\ddag}\\
  &\quad + (-1)^{pq}\!\!\sum_{\sigma\in\perm_{(p,1,q-1)}}(-1)^\sigma
     f_2\!\bigl(\rep(f_1(\ldots))_{\la_{\sigma(p+1)}}
     u_{\sigma(p+1)},\ldots\bigr)\big|_{\la_{p+q}^\ddag}\\
  &\quad - (-1)^{pq}\!\!\sum_{\sigma\in\perm_{(p,q)}}(-1)^\sigma
     \{f_1(\ldots)_\la f_2(\ldots)\}_\Vs\big|_{\la_{p+q}^\ddag},
\end{split}
\end{equation}
for all $f_1\in\Cch^p(\Ws,\Vs)$, $f_2\in\Cch^q(\Ws,\Vs)$.
This is the \textit{controlling algebra for conformal relative
Rota-Baxter operators of weight $\la$}.
\end{corollary}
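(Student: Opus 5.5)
Specializing Theorem~\ref{thm:control-II} to the setting of Example~\ref{ex:conf-RRB} should give the result directly. There $\rep_\Ws=0$, $\psi_1=0$, and $\{u_\la v\}_\Ws$ is the weighted bracket $\la\{u_\la v\}_{\Ws,0}$. So $\shf=0$ and the structure reduces to $\str=\smu+\smv$, where $\smu$ carries $\{\cdot_\la\cdot\}_\Vs$ and the action $\rep_\Vs=\rep$, and $\smv$ carries only the weighted bracket on $\Ws$ (its $\Vs$-component vanishes since $\rep_\Ws=0$). By \eqref{eq:l3-II}, $l_3=0$, and also $l_k=0$ for $k\ge4$. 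Hence the $L_\infty$-algebra $(\bigoplus_n\Cch^{n+1}(\Ws,\Vs),l_1,l_2)$ is a differential graded Lie algebra, with $\dM:=l_1$ and $\langle f_1,f_2\rangle:=l_2(f_1,f_2)$ up to the sign normalization $(-1)^{p-1}$ already used in \eqref{eq:l2} of Theorem~\ref{thm:Linf-quasi}. The DGLA axioms are simply the $L_\infty$ relations with $n=2,3$ once $l_3=0$, so no extra verification is needed. Alternatively, one may invoke Corollary~\ref{cor:dgla-twilled}, since $\Vs\ltimes_\rep\Ws$ is twilled.

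The real work is making the two operations explicit. For $\dM f=[\smv,\widehat f]_{NR}$ with $f\in\Cch^p(\Ws,\Vs)$, expand $\smv\star\widehat f$ and $\widehat f\star\smv$ by the pre-bracket formula on inputs $u_1,\dots,u_{p+1}\in\Ws$. In $\smv\star\widehat f$, the output of $\widehat f$ lies in $\Vs$ and is fed into $\smv$ together with an element of $\Ws$. This would produce $\rep_\Ws$-terms, which vanish. In $\widehat f\star\smv$, $\smv$ is applied to two $\Ws$-inputs and gives $\la\{u_{i\,\la_i}u_j\}_{\Ws,0}$, which is then fed into $f$. Summing over $(2,p-1)$-unshuffles and normalizing via (C2), \eqref{eq:antisym}, to put the bracket in the first slot with the substitution $\la_{p+1}\mapsto\la_{p+1}^\ddag$ yields \eqref{eq:diff-RRB}. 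The signs come from $(-1)^\sigma$ together with the factor $-(-1)^{(p-1)(2-1)}$ in \eqref{eq:FN-bracket}.

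For $l_2$, first compute $[\smu,\widehat f_1]_{NR}$. It has bidegree $0\mid p$ by Lemma~\ref{lem:FN-bigrade}, and its $\Ws^{\otimes p+1}\to\Ws$ part vanishes because $f_1$ outputs into $\Vs$. What survives are mixed terms on $\Vs\otimes\Ws^{\otimes p}$: the action term $\rep(f_1(\cdot))$ landing in $\Ws$, and the terms $\{f_1(\cdot)_\la a\}_\Vs$ and $f_1(\ldots,\rep(a)u)$ landing in $\Vs$. Bracketing next with $\widehat f_2$ substitutes $f_2(u_{\sigma(1)},\dots)$ for the $\Vs$-slot. This produces three families of terms. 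The first is $f_1$ applied to $\rep(f_2(\dots))u$, indexed by $(q,1,p-1)$-shuffles. The second is the symmetric term with $f_1,f_2$ exchanged, carrying the Koszul factor $(-1)^{pq}$. The third is $\{f_1(\dots)_\la f_2(\dots)\}_\Vs$, indexed by $(p,q)$-shuffles. This matches \eqref{eq:bracket-RRB}.

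The main obstacle is sign and $\la$-bookkeeping. Each application of \eqref{eq:antisym} moves variables and substitutes $\la^\ddag$. The identities \eqref{eq:sesqui-extra1}--\eqref{eq:sesqui-extra2} are needed to rewrite $\rep(f_2)_{-\la-\p}$-type terms into the stated form. The plan is to carry this out carefully first for $p=q=1$, checking the sign pattern against the Maurer--Cartan equation \eqref{eq:conf-RRB}, and then to extend it by the standard shuffle-counting argument. Finally, Theorem~\ref{thm:control-II} identifies Maurer--Cartan elements $\dM B+\tfrac12\langle B,B\rangle=0$ with weight-$\la$ conformal relative Rota--Baxter operators.
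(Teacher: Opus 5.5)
Your overall route is the one the paper intends. The corollary is stated without proof as a specialization of Theorem~\ref{thm:control-II}: here $\shf=0$, so $l_3=0$, or equivalently Corollary~\ref{cor:dgla-twilled} applies. Your computation of $\dM=[\smv,\widehat f]_{NR}$ is correct, including the vanishing of $\smv\star\widehat f$ because $\rep_\Ws=0$, and the sign $-(-1)^{p-1}(-1)^{i+j-1}=(-1)^{p+i+j-1}$.

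The gap is in the computation of the bracket. You assert that the $\Ws^{\otimes p+1}\to\Ws$ component of $G:=[\smu,\widehat f_1]_{NR}$ vanishes because $f_1$ takes values in $\Vs$. That is false. In $\smu\star\widehat f_1$ on inputs $(0,u_1),\ldots,(0,u_{p+1})$, the $\Vs$-valued output of $f_1$ enters $\smu$ paired with some $(0,u_{\sigma(p+1)})$, and $\smu_\la((x,0),(0,u))=(0,\rep(x)_\la u)$. So this component equals $\sum_{\sigma\in\perm_{(p,1)}}(-1)^\sigma\rep\bigl(f_1(u_{\sigma(1)},\ldots,u_{\sigma(p)})\bigr)_{\la_{\sigma(1)}+\cdots+\la_{\sigma(p)}}u_{\sigma(p+1)}$, with the usual $\la^\ddag$ substitution, while $\widehat f_1\star\smu$ contributes nothing on this input type. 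Conversely, the bidegree $0\mid p$ you invoke forces $G$ to be $\Vs$-valued on $\Vs\otimes\Ws^{\otimes p}$. So no action term landing in $\Ws$ lives there; on that input type $G$ consists only of $\{f_1(\cdots)_\la a\}_\Vs$ and the terms $f_1(\rep(a)u,\ldots)$.

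This misplacement is not cosmetic. Your next step, substituting $f_2$ into the $\Vs$-slot, computes only $G\star\widehat f_2$, which produces the first and third families of \eqref{eq:bracket-RRB}. The second family $f_2(\rep(f_1(\cdots))u,\ldots)$ comes exclusively from the other half of the NR bracket, $-(-1)^{p(q-1)}\widehat f_2\star G$, evaluated through exactly the component you discarded. Under your claim this term would vanish, and the resulting $l_2$ would not be graded antisymmetric. It therefore cannot be supplied afterwards by appealing to symmetry.

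Once the component is kept, the signs come out as stated, using the normalization $(-1)^{p-1}$:
\begin{itemize}
\item the first family carries $(-1)^{p-1}\cdot\bigl(-(-1)^{p-1}\bigr)=-1$;
\item the second family carries $(-1)^{p-1}\cdot\bigl(-(-1)^{p(q-1)}\bigr)=(-1)^{pq}$;
\item the third family carries $(-1)^{p-1}(-1)^{p}=-1$, which becomes $-(-1)^{pq}$ after reindexing $(q,p)$-unshuffles as $(p,q)$-unshuffles.
\end{itemize}
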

 
\begin{corollary}[Controlling algebra for conformal $\mathcal{O}$-operators]
In the setting of Example~\ref{ex:conf-RRB} with $\la=0$,
$\bigl(\bigoplus_{n\geq1}\Cch^n(\Ws,\Vs),\,\langle\cdot,\cdot\rangle\bigr)$
is a graded Lie algebra.  This is the \textit{controlling algebra for
conformal $\mathcal{O}$-operators}.
\end{corollary}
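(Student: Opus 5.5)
The plan is to specialize Theorem~\ref{thm:control-II} to the quasi-twilled structure of Example~\ref{ex:conf-RRB} with $\la=0$, where $\Ep=\Vs\ltimes_\rep\Ws$ and $\{u_\la v\}_\Ws=0$, $\rep_\Ws=0$, $\psi_1=0$. In this setting $\shf=0$, so $l_3=0$ by \eqref{eq:l3-II}. Moreover, $\smv$ is built only from $\{\cdot_\la\cdot\}_\Ws$ and $\rep_\Ws$, by \eqref{eq:comp-nu2}, and both of these vanish; hence $\smv=0$ and $l_1=[\smv,\widehat f]_{NR}=0$. What remains of the $L_\infty$-algebra of Theorem~\ref{thm:control-II} is $l_2$ alone. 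The higher Jacobi identity with $n=3$ then reduces to the graded Jacobi identity for $l_2$ (the terms involving $l_1$ or $l_3$ drop out), and graded antisymmetry of $l_2$ is part of the $L_\infty$ axioms. So $(\bigoplus_{n\ge1}\Cch^n(\Ws,\Vs),l_2)$ is a graded Lie algebra, possibly after the usual décalage sign. Following the convention of Theorem~\ref{thm:Linf-quasi}, I will define $\langle f_1,f_2\rangle:=(-1)^{m-1}l_2(f_1,f_2)$ to fix that sign.

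To avoid depending on Voronov's construction with a possibly nonzero $l_1$, I will also give a direct derived-bracket argument. Here $\str=\smu$ and $[\smu,\smu]_{NR}=0$ by \eqref{eq:quasi-MC}. Define $\langle f_1,f_2\rangle=\pm[[\smu,\widehat f_1]_{NR},\widehat f_2]_{NR}$. Expanding with the graded Jacobi identity of $[\cdot,\cdot]_{NR}$ (Lemma~\ref{lem:FN-gLa}), and using both $[\widehat f_1,\widehat f_2]_{NR}=0$ (Lemma~\ref{lem:FN-zero}) and $[\smu,\smu]_{NR}=0$, yields the standard derived-bracket Jacobi identity. The key point is that no projection is needed: by Lemma~\ref{lem:FN-bigrade}, $[[\smu,\widehat f_1],\widehat f_2]$ has bidegree $1+(-1)+(-1)\mid 0+(m-1)+(n-1)=-1\mid m+n-2$, so it already lies in $F$. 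Antisymmetry likewise follows from $[\widehat f_1,\widehat f_2]_{NR}=0$ via Jacobi.

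Finally, I will check that Maurer--Cartan elements, namely $\langle B,B\rangle=0$ for $B\in\Cch^1(\Ws,\Vs)$, are exactly the conformal $\mathcal O$-operators. This is the specialization of \eqref{eq:MC-II} with $l_1=l_3=0$, i.e.\ \eqref{eq:conf-RRB} at weight $0$. I expect the main obstacle to be pinning down the precise signs and the conformal shift $\la_{p+q}^\ddag$ when comparing with the explicit bracket \eqref{eq:bracket-RRB}. Identifying the last term as the one coming from $\{\cdot_\la\cdot\}_\Vs$, and the first two as the ones coming from $\rep$, requires unwinding the pre-bracket $\star$ on mixed arguments. I would carry this out on low arities first ($p=q=1$) and then propagate the signs by the unshuffle bookkeeping.
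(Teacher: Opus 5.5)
Your proposal is correct and follows the same route the paper takes implicitly. At weight zero one has $\shf=0$ and $\smv=0$, so in Theorem~\ref{thm:control-II} (equivalently Corollary~\ref{cor:control-RRB}, whose differential $\dM$ carries the weight as a factor) $l_1$ and $l_3$ vanish; only the derived bracket $l_2$ survives, and after the d\'ecalage sign it is a graded Lie bracket whose Maurer--Cartan elements are the conformal $\mathcal{O}$-operators.

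One harmless slip: for $f_1\in\Cch^m(\Ws,\Vs)$ the lift $\widehat{f}_1$ has bidegree $-1\mid m$, not $-1\mid m-1$. So $[[\smu,\widehat{f}_1]_{NR},\widehat{f}_2]_{NR}$ has bidegree $-1\mid m+n$, matching its arity $m+n$, rather than $-1\mid m+n-2$. Only the first index $-1$ is needed to see that it lies in $F$, so the argument is unaffected.
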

 
\begin{corollary}[Controlling algebra for conformal twisted Rota-Baxter operators]
\label{cor:control-TRB}
In the setting of Example~\ref{ex:conf-TRB},
$\bigl(\bigoplus_{n\geq0}\Cch^{n+1}(\Mod,\Vs),\,l_2,\,l_3\bigr)$
is an $L_\infty$-algebra where $l_2$ and $l_3$ are given by
\eqref{eq:l2-II}-\eqref{eq:l3-II} with $\shf=\widehat\phi$.
This is the \textit{controlling algebra for conformal twisted
Rota-Baxter operators}.
\end{corollary}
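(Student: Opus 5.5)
The statement to prove is Corollary~\ref{cor:control-TRB}. I would obtain it as a direct specialization of Theorem~\ref{thm:control-II}, applied to the quasi-twilled Lie conformal algebra $\Ep=\Vs\ltimes_\phi\Mod$ of Example~\ref{ex:conf-TRB}. That example has $\Ws=\Mod$, $\{u_\la v\}_\Ws=0$, $\rep_\Ws=0$, $\rep_\Vs=\rep$ and $\psi_1=\phi$.

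The first step is to verify that this data really is quasi-twilled. The $\phi$-deformed semi-direct product is a Lie conformal algebra because $\phi$ is a $2$-cocycle, by \eqref{eq:twisted-semidirect}. Moreover, $\Mod$ is an abelian subalgebra, so $\psi_2=0$. Reading off \eqref{eq:comp-psi1}--\eqref{eq:comp-nu2} gives the components
\[
\shf=\widehat\phi,\qquad \smu=\widehat{\{\cdot_\la\cdot\}_\Vs}+\widehat\rep,\qquad \smv=0,
\]
where $\smv$ vanishes because both $\{\cdot_\la\cdot\}_\Ws$ and $\rep_\Ws$ vanish.

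The second step is to insert these components into \eqref{eq:l1-II}--\eqref{eq:l3-II}. Since $\smv=0$, we get $l_1(f)=[\smv,\widehat f]_{NR}=0$. The operations $l_2$ and $l_3$ are then exactly \eqref{eq:l2-II} and \eqref{eq:l3-II} with $\shf=\widehat\phi$. Theorem~\ref{thm:control-II} already says that $(l_1,l_2,l_3)$ satisfy the higher Jacobi identities, so they still hold with $l_1=0$. Hence $\bigl(\bigoplus_{n\ge0}\Cch^{n+1}(\Mod,\Vs),l_2,l_3\bigr)$ is an $L_\infty$-algebra.

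The third step identifies the Maurer--Cartan elements. By Theorem~\ref{thm:control-II}, $B$ is Maurer--Cartan exactly when it is a left deformation map. By Example~\ref{ex:conf-TRB}, being a left deformation map is equivalent to \eqref{eq:conf-TRB}, i.e.\ to $B$ being a conformal twisted Rota--Baxter operator. For the Maurer--Cartan equation $\tfrac12 l_2(B,B)+\tfrac16 l_3(B,B,B)=0$, it is worth checking the terms separately on $u,v\in\Mod$:
\begin{itemize}
\item $\tfrac12 l_2(B,B)$ should produce $\{B(u)_\la B(v)\}_\Vs-B(\rep(B(u))_\la v-\rep(B(v))_{-\la-\p}u)$, up to an overall sign;
\item $\tfrac16 l_3(B,B,B)$ should produce $-B(\phi_\la(B(u),B(v)))$.
\end{itemize}

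The main obstacle is sign and normalization bookkeeping in these two NR-bracket expansions. Bidegree counting with Lemma~\ref{lem:FN-bigrade} shows that only the term with bidegree $-1\mid 2$ survives the projection, so only the terms listed above appear. What remains is to confirm that the combinatorial factors $2$ and $6$ from the iterated brackets cancel the $\tfrac12$ and $\tfrac16$. To check this I would compute directly $[[[\widehat\phi,\widehat B],\widehat B],\widehat B]_\la(u,v)$. Expanding the inner bracket gives $\widehat\phi(\widehat B\cdot,\cdot)$-type terms. Each further bracket with $\widehat B$ substitutes $B$ into one remaining $\Vs$-slot, and the final $\widehat B\circ$ term maps the $\Mod$-valued output back into $\Vs$, contributing $3!$ equal terms.
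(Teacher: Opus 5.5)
Your proposal is correct and follows the paper's route: the corollary is obtained by specializing Theorem~\ref{thm:control-II} to $\Vs\ltimes_\phi\Mod$, where $\rep_\Ws=0$ and $\{\cdot_\la\cdot\}_\Ws=0$ give $\smv=0$ (hence $l_1=0$) and $\shf=\widehat\phi$, and the Maurer--Cartan elements are identified through Example~\ref{ex:conf-TRB}. Your extra term-by-term check is sound, with one correction: the sign of $\tfrac12 l_2(B,B)$ is not a free ``overall sign''. Expanding $(I-\Bar{B})\str_\la\bigl((I+\Bar{B})\cdot,(I+\Bar{B})\cdot\bigr)$ in powers of $\Bar{B}$ shows it is exactly $\{B(u)_\la B(v)\}_\Vs-B\bigl(\rep(B(u))_\la v-\rep(B(v))_{-\la-\p}u\bigr)$, and only with this sign does it combine with $-B(\phi_\la(B(u),B(v)))$ into \eqref{eq:conf-TRB}.
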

 
\begin{corollary}[Controlling algebra for conformal Reynolds operators]
\label{cor:control-Reynolds}
In the setting of Example~\ref{ex:conf-Reynolds},
$\bigl(\bigoplus_{n\geq0}\Cch^{n+1}(\Vs,\Vs),\,l_2,\,l_3\bigr)$
is an $L_\infty$-algebra.  This is the \textit{controlling algebra for
conformal Reynolds operators}.
\end{corollary}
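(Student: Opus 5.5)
The plan is to read the corollary as a direct specialization of Theorem~\ref{thm:control-II}. The key point is to pin down the quasi-twilled data that actually produces the Reynolds identity \eqref{eq:conf-Reynolds}. That identity contains no term $B(\{u_\la v\}_\Ws)$ and no term $\rep_\Ws(u)_\la B(v)$. Comparing with \eqref{eq:DmapII}, the structure realizing it must therefore be the following: $\Ws=\Vs$ (as a $\Rg$-module) with zero bracket $\{u_\la v\}_\Ws=0$; $\rep_\Ws=0$; $\rep_\Vs=\ad$; $\{a_\la b\}_\Vs$ the given bracket; and $\psi_{1,\la}(a,b)=\{a_\la b\}_\Vs$. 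I will take this as the precise meaning of the ``setting of Example~\ref{ex:conf-Reynolds}''.

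\textbf{Step 1: the data is quasi-twilled.} With $\psi_2=0$ we need $[\str,\str]_{NR}=0$, i.e.\ that
\[
  \{(a,u)_\la(b,v)\}=\bigl(\{a_\la b\}_\Vs,\ \{a_\la v\}_\Vs-\{b_{-\la-\p}u\}_\Vs+\{a_\la b\}_\Vs\bigr)
\]
is a Lie conformal bracket on $\Vs\oplus\Vs$. I would verify this through the $\C[\partial]$-isomorphism $(a,u)\mapsto(a,u+a)$. It intertwines this bracket with the semi-direct product $\Vs\ltimes_{\ad}\Vs$, which is a Lie conformal algebra by Proposition~\ref{prop:semidirect}. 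The extra $\{a_\la b\}$ term is exactly a $\phi$-twist by the coboundary of $-\mathrm{id}$, as in Remark~\ref{rmk:DmapI-existence}. Alternatively, one checks directly that $\psi_1=\{\cdot_\la\cdot\}_\Vs$ is a $2$-cocycle of $\Vs$ with values in $(\Vs;\ad)$ (this is Jacobi) and invokes \eqref{eq:twisted-semidirect}. Since $\Ws$ is abelian, it is a subalgebra, so $(\Ep,\Vs,\Ws)$ is quasi-twilled.

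\textbf{Step 2: specialize Theorem~\ref{thm:control-II}.} By \eqref{eq:comp-nu2}, $\smv=0$ because both $\rep_\Ws$ and $\{\cdot_\la\cdot\}_\Ws$ vanish. Hence $l_1(f)=[\smv,\widehat f]_{NR}=0$ identically. The theorem, applied to this quasi-twilled structure, yields an $L_\infty$-algebra $(l_1,l_2,l_3)$ with $l_k=0$ for $k\ge4$. With $l_1=0$, this is exactly an $L_\infty$-algebra $\bigl(\bigoplus_{n\ge0}\Cch^{n+1}(\Vs,\Vs),l_2,l_3\bigr)$. The higher Jacobi identities of Definition~\ref{def:Linf} are the same identities with every term containing $l_1$ deleted, so nothing new has to be checked. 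Here $l_2$ is built from $\smu$ (the bracket of $\Vs$ and the adjoint action), and $l_3$ from $\shf=\widehat{\psi_1}$, the lift of $\{\cdot_\la\cdot\}_\Vs$.

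\textbf{Step 3: Maurer--Cartan elements.} By the last part of Theorem~\ref{thm:control-II}, $B$ is a Maurer--Cartan element, $\tfrac12 l_2(B,B)+\tfrac16 l_3(B,B,B)=0$, if and only if \eqref{eq:DmapII} holds. With the data above, \eqref{eq:DmapII} reads
\[
  \{B(u)_\la B(v)\}_\Vs=B\bigl(\{B(u)_\la v\}_\Vs-\{B(v)_{-\la-\p}u\}_\Vs+\{B(u)_\la B(v)\}_\Vs\bigr),
\]
which is \eqref{eq:conf-Reynolds}. This justifies calling the algebra the controlling algebra of conformal Reynolds operators.

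\textbf{Main obstacle.} The only delicate point is Step~1 together with the identification of the setting. If one literally kept $\{u_\la v\}_\Ws=\{u_\la v\}_\Vs$, then $\smv\ne0$ and $l_1$ would not vanish. So I would fix the abelian choice of $\Ws$ dictated by \eqref{eq:conf-Reynolds}, and then carefully check the cocycle/isomorphism argument that makes $\str$ Maurer--Cartan.
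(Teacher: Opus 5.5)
Your proposal is correct and is essentially the paper's (unwritten) argument: the corollary is Theorem~\ref{thm:control-II} specialized to the Reynolds data. There $\smv=0$ forces $l_1=0$, and the Maurer--Cartan equation $\tfrac12 l_2(B,B)+\tfrac16 l_3(B,B,B)=0$ becomes \eqref{eq:conf-Reynolds}. Your reading of the setting with $\Ws$ abelian is the right one, since the literal choice $\{u_\la v\}_\Ws=\{u_\la v\}_\Vs$ would add a term $B(\{u_\la v\}_\Vs)$, make $l_1\neq 0$, and in general violate the Jacobi identity on triples $(0,u),(b,0),(c,0)$; your check via $(a,u)\mapsto(a,u+a)$ that the data is quasi-twilled supplies a detail the paper omits.
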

 
Theorem~\ref{thm:control-II} also gives a new result:
 
\begin{corollary}[Controlling algebra for deformation maps of matched pairs]
\label{cor:control-matched}
In the setting of Example~\ref{ex:conf-matched-pair} ($\shf=0$), the
controlling structure is a differential graded Lie algebra
$\bigl(\bigoplus_{n\geq1}\Cch^n(\Ws,\Vs),\,\dM,\,\langle\cdot,\cdot\rangle\bigr)$,
where $\langle\cdot,\cdot\rangle$ is given by \eqref{eq:bracket-RRB} and
\begin{equation}\label{eq:diff-matched}
\begin{split}
  (\dM f)_{\la_1,\ldots,\la_p}(u_1,\ldots,u_{p+1})
  &= \sum_{i=1}^{p+1}(-1)^{p+i}
     \rep_\Ws(u_i)_{\la_i}
     f_{\widehat{\la_i}}(\ldots)\big|_{\la_{p+1}^\ddag}\\
  &\quad + \sum_{i<j}(-1)^{p+i+j-1}
     f_{\la_i+\la_j,\widehat{\la_i},\widehat{\la_j}}
     (\{u_i{}_{\la_i}u_j\}_\Ws,\ldots)\big|_{\la_{p+1}^\ddag}.
\end{split}
\end{equation}
Maurer-Cartan elements are exactly deformation maps of a matched pair
of Lie conformal  algebras.
\end{corollary}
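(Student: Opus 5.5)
The statement is Corollary~\ref{cor:control-matched}. I will specialize Theorem~\ref{thm:control-II} to the twilled case $\psi_1=0$, i.e.\ $\shf=0$, and then compute the surviving brackets explicitly.

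\textbf{Step 1: the higher bracket vanishes.} Since $l_3$ in \eqref{eq:l3-II} is a triple bracket starting with $\shf$, it vanishes when $\shf=0$. The higher Jacobi identities of the $L_\infty$-algebra of Theorem~\ref{thm:control-II} then reduce to three statements: $l_1^2=0$, $l_1$ is a derivation of $l_2$, and $l_2$ satisfies the graded Jacobi identity. A graded antisymmetric $l_2$ of degree $0$ with these properties is a dgLa after the standard décalage/sign convention.

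Equivalently, one can argue directly from \eqref{eq:MC-system} with $\psi_1=\psi_2=0$. It gives
\[
[\smu,\smu]_{NR}=0,\qquad [\smv,\smv]_{NR}=0,\qquad [\smu,\smv]_{NR}=0.
\]
Hence $\dM:=[\smv,\cdot]_{NR}$ squares to zero. Moreover, the derived bracket $\langle f_1,f_2\rangle:=\pm[[\smu,\widehat f_1]_{NR},\widehat f_2]_{NR}$ is a graded Lie bracket on the abelian subalgebra, by the usual derived-bracket lemma using $[\smu,\smu]_{NR}=0$. This is exactly Corollary~\ref{cor:dgla-twilled} applied to $\Cch^*(\Ws,\Vs)$, which I will invoke.

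\textbf{Step 2: the explicit differential.} I will identify $\dM f=[\smv,\widehat f]_{NR}$ by restricting to inputs in $\Ws^{\otimes(p+1)}$ and projecting to $\Vs$. Using \eqref{eq:comp-nu2}, $\smv$ has two pieces on such inputs:
\begin{itemize}
\item the piece $\{u_\la v\}_\Ws$, which feeds into $\widehat f$ through $\widehat f\star\smv$ and produces the terms $f(\{u_i{}_{\la_i}u_j\}_\Ws,\ldots)$;
\item the piece $\rep_\Ws(u)_\la(\cdot)$, which acts on the output of $\widehat f$ through $\smv\star\widehat f$ and produces the terms $\rep_\Ws(u_i)_{\la_i}f(\ldots)$.
\end{itemize}
I will track the unshuffle signs in the pre-bracket and the substitution $\la_{p+1}\mapsto\la_{p+1}^\ddag$. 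This is the same computation as Proposition~\ref{prop:FN-module}, via \eqref{eq:FN-coboundary}, for the Lie conformal algebra $(\Ws,\{\cdot_\la\cdot\}_\Ws)$ acting on $(\Vs;\rep_\Ws)$. So $\dM$ equals $(-1)^{p-1}$ times the Chevalley--Eilenberg coboundary, which gives \eqref{eq:diff-matched} after reorganizing signs.

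\textbf{Step 3: the explicit bracket.} I will compute $\langle\cdot,\cdot\rangle$ using \eqref{eq:comp-nu1}. The component $\rep_\Vs$ of $\smu$ yields the two $f_i(\rep(f_j(\ldots))\ldots)$ sums. The component $\{\cdot_\la\cdot\}_\Vs$ yields the $\{f_1(\ldots)_\la f_2(\ldots)\}_\Vs$ sum. Together these give \eqref{eq:bracket-RRB}, with $\rep=\rep_\Vs$ and without the weight term.

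\textbf{Step 4: Maurer--Cartan elements.} The MC equation $\dM B+\tfrac12\langle B,B\rangle=0$ is \eqref{eq:MC-II} with $l_3=0$. By Theorem~\ref{thm:control-II} it is equivalent to \eqref{eq:DmapII} with $\psi_1=0$, which is \eqref{eq:conf-matched-map}. So the MC elements are exactly the deformation maps of matched pairs, as in Example~\ref{ex:conf-matched-pair}.

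\textbf{Main obstacle.} The hard part is the sign bookkeeping in Steps 2 and 3. The conformal variables $\la^\ddag$ move inside unshuffles, and the conformal sesquilinearity identities \eqref{eq:sesqui-extra1}--\eqref{eq:sesqui-extra2} are needed to rewrite the terms with $-\la-\p$ as the displayed sums. I would first check the case $p=1$ against Proposition~\ref{left Dmap identity} to fix the signs, and then argue by the general pattern.
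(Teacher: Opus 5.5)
Your proposal is correct and follows the paper's route. The paper states this corollary without a separate proof, as the specialization of Theorem~\ref{thm:control-II} to $\shf=0$: there $l_3$ vanishes, and $(l_1,l_2)=([\smv,\cdot]_{NR},[[\smu,\cdot]_{NR},\cdot]_{NR})$ form a dgLa whose Maurer--Cartan elements are exactly the solutions of \eqref{eq:conf-matched-map}. Your identification of $\dM$ via Proposition~\ref{prop:FN-module}, and of $\langle\cdot,\cdot\rangle$ via the $\rep_\Vs$ and $\{\cdot_\la\cdot\}_\Vs$ components of $\smu$, makes explicit what the paper leaves implicit.
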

 
Let $B:\Ws\to\Vs$ be a left deformation map.  The twisted $L_\infty$-algebra is given by 
\begin{align}
  l_1^B(f)
  &= l_1(f) + l_2(B,f) + \tfrac{1}{2}l_3(B,B,f),
  \label{eq:l1B-II}\\
  l_2^B(f_1,f_2)
  &= l_2(f_1,f_2) + l_3(B,f_1,f_2),
  \label{eq:l2B-II}\\
  l_3^B(f_1,f_2,f_3)
  &= l_3(f_1,f_2,f_3),
  \label{eq:l3B-II}\\
  l_k^B &= 0, \quad k\geq4.
\end{align}
 
\begin{theorem}\label{thm:deform-DmapII}
Let $B:\Ws\to\Vs$ be a left deformation map\ of $(\Ep,\Vs,\Ws)$.  A $\Rg$-module
homomorphism $B':\Ws\to\Vs$ gives a left deformation map\ $B+B'$ if and only if $B'$
is a Maurer-Cartan element of the twisted $L_\infty$-algebra
$\bigl(\bigoplus_{n\geq0}\Cch^{n+1}(\Ws,\Vs),\,l_1^B,\,l_2^B,\,l_3^B\bigr)$, that is 
\begin{equation}\label{eq:MC-deform-II}
  l_1^B(B') + \tfrac{1}{2}l_2^B(B',B') + \tfrac{1}{6}l_3^B(B',B',B') = 0.
\end{equation}
\end{theorem}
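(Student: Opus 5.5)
The proof mirrors that of Theorem~\ref{thm:deform-DmapI}: we expand the Maurer--Cartan equation of Theorem~\ref{thm:control-II} at $B+B'$ and subtract the Maurer--Cartan equation satisfied by $B$. By Theorem~\ref{thm:control-II}, $B+B'$ is a left deformation map if and only if
\[
  l_1(B+B') + \tfrac{1}{2}l_2(B+B',B+B') + \tfrac{1}{6}l_3(B+B',B+B',B+B') = 0 .
\]
Here $B+B'$ is automatically a $\Rg$-module homomorphism, so Theorem~\ref{thm:control-II} applies.

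The first step is to expand each term by multilinearity and graded symmetry. Since $B,B'\in\Cch^1(\Ws,\Vs)$ sit in degree $0$ of $F$, the Koszul signs are trivial and the $l_k$ are genuinely symmetric on these arguments. This gives
\begin{align*}
  l_2(B+B',B+B') &= l_2(B,B)+2l_2(B,B')+l_2(B',B'),\\
  l_3(B+B',B+B',B+B') &= l_3(B,B,B)+3l_3(B,B,B')+3l_3(B,B',B')+l_3(B',B',B').
\end{align*}
Subtracting $l_1(B)+\frac12 l_2(B,B)+\frac16 l_3(B,B,B)=0$ leaves
\[
  l_1(B')+l_2(B,B')+\tfrac12 l_3(B,B,B') + \tfrac12\bigl(l_2(B',B')+l_3(B,B',B')\bigr) + \tfrac16 l_3(B',B',B') = 0 .
\]

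The second step is to regroup using \eqref{eq:l1B-II}--\eqref{eq:l3B-II}. The first three terms are exactly $l_1^B(B')$. The bracketed term is $l_2^B(B',B')$, and the last term is $\frac16 l_3^B(B',B',B')$. This is precisely \eqref{eq:MC-deform-II}, and each step is reversible, which gives the equivalence.

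The only delicate point is justifying that $(F,l_1^B,l_2^B,l_3^B)$ is indeed an $L_\infty$-algebra, so that "Maurer--Cartan element of the twisted $L_\infty$-algebra" is meaningful. This follows from the general twisting result for a Maurer--Cartan element in an $L_\infty$-algebra with finitely many nonzero brackets \cite{DSV,Get}. The formulas $l_k^B(x_1,\dots,x_k)=\sum_{n\ge0}\frac1{n!}l_{k+n}(B,\dots,B,x_1,\dots,x_k)$ truncate because $l_k=0$ for $k\ge4$, and they reproduce \eqref{eq:l1B-II}--\eqref{eq:l3B-II}. Beyond that, the symmetry of $l_3$ in its arguments is needed so that the coefficient $3$ terms match $l_3^B$ as written, with $B$ in the first slots. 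This symmetry holds by graded antisymmetry with all arguments in degree $0$.
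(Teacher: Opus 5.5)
Your proposal is correct and is essentially the paper's own proof. You expand the Maurer--Cartan equation of Theorem~\ref{thm:control-II} at $B+B'$, subtract the equation satisfied by $B$, and regroup the remaining terms into $l_1^B$, $l_2^B$, $l_3^B$. You simply make explicit the multilinear expansion and the symmetry of the brackets on degree-$0$ arguments, both of which the paper leaves implicit.
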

 
\begin{proof}
By Theorem~\ref{thm:control-II}, $B+B'$ is a left deformation map if and only if
$\sum_{k=1}^3\frac{1}{k!}l_k(B+B',\ldots)=0$.  Using the Maurer-Cartan  equation
for $B$ gives \eqref{eq:MC-deform-II}.
\end{proof}
 
Applying Theorem~\ref{thm:deform-DmapII} to
Corollary~\ref{cor:control-matched}, the differential graded Lie algebra
governing deformations of a deformation map $B:\Ws\to\Vs$ of a matched
pair has differential

\begin{equation}\label{eq:dB-matched}
\begin{split}
  (\dM^B f)_{\boldsymbol\la}(u_1,\ldots,u_{p+1})
  &= \sum_{i=1}^{p+1}(-1)^{p+i}
     \rep_\Ws(u_i)_{\la_i}
     f_{\widehat{\la_i}}(\ldots)\big|_{\la_{p+1}^\ddag}
  + \sum_{i<j}(\cdots)\big|_{\la_{p+1}^\ddag}\\
  &\quad + \sum_{i=1}^{p+1}(-1)^{p+i}
     B(\rep_\Vs(f(\ldots))_{\la_i}u_i)\big|_{\la_{p+1}^\ddag}
  - \sum_{i=1}^{p+1}(-1)^{p+i}
     f(\rep_\Vs(B(u_i)),\ldots)\big|_{\la_{p+1}^\ddag}\\
  &\quad + \sum_{i=1}^{p+1}(-1)^{p+i}
     \{B(u_i)_{\la_i}f(\ldots)\}_\Vs\big|_{\la_{p+1}^\ddag}.
\end{split}
\end{equation}
 
\begin{remark}
Applying Theorem~\ref{thm:deform-DmapII} to
Corollaries~\ref{cor:control-RRB}-\ref{cor:control-Reynolds} recovers
the algebras governing deformations of conformal relative Rota-Baxter
operators, conformal twisted Rota-Baxter operators, and conformal
Reynolds operators, extending the classical results of \cite{Das0,Das1,TBGS}
to the conformal setting. The complete study of deformations will be provided in a forthcoming paper.
\end{remark}
 \section*{Summary} The following tables summarize the Lie conformal algebras structure and representation  induced by special cases of right and left deformation maps on a quasi-twilled Lie conformal algebra $(\Ep,\Vs,\Ws)$, together with their corresponding representations and cohomologies.
Table~\ref{tab1} presents the special cases of right deformation maps, including conformal modified $r$-matrices, conformal crossed homomorphisms, conformal derivations, and Lie conformal  algebra homomorphisms.
Table~\ref{tab2} lists the special cases of left deformation maps, including conformal relative Rota--Baxter operators, conformal twisted Rota-Baxter operators, conformal Reynolds operators, and deformation maps arising from matched pairs.
\begin{table}[ht]
\centering
\caption{Right deformation map $D:\Vs\to\Ws$}
\label{tab1}
\begin{tabular}{|c|c|c|}
\hline
  conformal operator & Induced Lie conformal  structure&  Induced representation\\
\hline
  $r$-matrix & $\{a_\la D(b)\}_\Vs - \{b_{-\la-\p}D(a)\}_\Vs$&  $\{D(a)_\la u\}_\Vs - D(\{u_\la a\}_\Vs)$\\
 \hline
 crossed homomorphism & $\{a_\la b\}_\Vs $&  $\rep(a)_\la u + \{D(a)_\la u\}_\Ws)$\\
 \hline
 conformal derivation & $\{a_\la b\}_\Vs$&  $\rep_\Vs(a)_\la u $\\
 \hline
 conf. Lie alg. homomorphism & $\{a_\la b\}_\Vs $&  $\{D(a)_\la u\}_\Ws)$\\
 \hline
\end{tabular}
\end{table}

In the classical case, the cohomology theories have been extensively studied in the literature,  see for   modified $r$-matrices \cite{JS2}, crossed homomorphisms \cite{PSTZ}, derivations \cite{TFS} and Lie algebra homomorphisms \cite{Fregier1,Fregier2}. In the table above, we summarize the Lie conformal algebras induced by these operators, together with their corresponding representations. For each of these operators, the associated cohomology theory is defined in terms of the Chevalley--Eilenberg cohomology of a suitable Lie conformal algebra induced by the operator, with coefficients in an appropriate induced representation. More precisely, the corresponding cohomology theories are described in Example~\ref{ex:Coh modified-r} for modified $r$-matrices, Example~\ref{ex:coh crossed-hom} for crossed homomorphisms, Example~\ref{ex:coh conf-derivation} for derivations, and Example~\ref{ex:coh conf-hom} for Lie conformal algebra homomorphisms. It turns out that these cohomologies are new except for 
Example~\ref{ex:coh crossed-hom}, see \cite{BBSS}.

\begin{table}[ht]
\centering
\caption{Left deformation map $B:\Ws\to\Vs$}
\label{tab2}
\begin{tabular}{|c|c|c|}
\hline
 conformal operator & Induced Lie conformal  structure&  Induced representation\\
\hline
  relative Rota-Baxter operator & $\{u_\la v\}_\Ws + \rep_\Vs(B(u))_\la v
$&  $\{B(u)_\la a\}_\Vs + B(\rep_\Vs(a)_\la u$\\
   & $
- \rep_\Vs(B(v))_{-\la-\p}u$&  \\\hline
  twisted Rota-Baxter operator& $\rep_\Vs(B(u))_\la v - \rep_\Vs(B(u))_{-\la-\p}v
$&  $\{B(u)_\la a\}_\Vs + B(\rep_\Vs(a)_\la u)
$\\& $
+ \phi_\la(B(u),B(v))$&  $
- B(\phi_\la(B(u),a))$\\\hline
  Reynolds operator & $\{B(u)_\la v\}_\Vs - \{B(v)_{-\la-\p}u\}_\Vs
$&  $\{B(u)_\la a\}_\Vs + B(\{a_\la u\}_\Vs)
$\\
  & $+ \{B(u)_\la B(v)\}_\Vs$&  $
- B(\{B(u)_\la a\}_\Vs)$\\
 \hline Deformation map of a matched pair  & $\{u_\la v\}_\Ws + \rep_\Vs(B(u))_\la v
$  &  $\{B(u)_\la a\}_\Vs + \rep_\Ws(u)_\la a 
$\\
 of Lie conformal  algebras & $- \rep_\Vs(B(v))_{-\la-\p}u$&  $
 +B(\rep_\Vs(a)_{-\la-\p}u)$\\
 \hline
\end{tabular}
\end{table}

Recently, the cohomology theories of Lie conformal algebras associated with relative Rota--Baxter operators of weight $0$, twisted Rota--Baxter operators, and Reynolds operators have been extensively studied in \cite{YL}. In the classical setting, the cohomology theories of Lie algebras associated with relative Rota--Baxter operators (also known as $\mathcal{O}$-operators) and twisted Rota--Baxter operators, which generalize Reynolds operators, were studied in \cite{TBGS} and \cite{Das21}, respectively.
In Table~\ref{tab2}, we list the Lie conformal algebras induced by each of the above conformal operators, together with their corresponding representations. The cohomology of Lie conformal algebras associated with deformation maps of matched pairs is new and defined in Example~\ref{def:cohom-matched}. In general, the cohomology theories associated with these operators are defined as the Chevalley--Eilenberg cohomology of the Lie conformal algebras induced by the respective operators, with coefficients in suitable induced representations.

%==========================================================================


\begin{thebibliography}{999}

\bibitem{AM14a}
A.~L.~Agore and G.~Militaru,
\textit{Extending structures for Lie algebras},
Monatsh. Math. \textbf{174} (2014), no.~2, 169--193.

\bibitem{AM14b}
A.~L.~Agore and G.~Militaru,
\textit{Classifying complements for Hopf algebras and Lie algebras},
 J. Algebra  \textbf{391} (2013), 193--208.
 
\bibitem{BDK}
B.~Bakalov, A.~D'Andrea, and V.~G.~Kac,
\emph{Theory of finite pseudoalgebras},
Adv.\ Math.\ \textbf{162} (2001), 1--140.

\bibitem{BKV}
B.~Bakalov, V.~G.~Kac, and A.~A.~Voronov,
\emph{Cohomology of conformal algebras},
Comm.\ Math.\ Phys.\ \textbf{200} (1999), 561--598.

\bibitem{BBSS} I. Basdouri, S. Benabdelhafidh, A. Saghrouni, S. Silvestrov, 
\emph{Cohomology and deformations of crossed homomorphisms on Lie conformal algebras}, 
Adv. Stud.: Euro-Tbil. Math. J. \textbf{18}, No. 3 (2025), 91--107.

\bibitem{Bax60}
G.~Baxter,
\textit{An analytic problem whose solution follows from a simple algebraic
identity},
Pacific J. Math. \textbf{10} (1960), 731--742.

\bibitem{D'AndreaKac} A. D'Andrea, V.~G.~Kac, \textit{Structure theory of finite conformal algebras,} Sel. Math. \textbf{4} (1998), 377--418

\bibitem{CK98}
A.~Connes and D.~Kreimer,
\textit{Hopf algebras, renormalization and noncommutative geometry},
Comm. Math. Phys. \textbf{199}, no.~1 (1998), 203--242.

\bibitem{CK00}
A.~Connes and D.~Kreimer,
\textit{Renormalization in quantum field theory and the Riemann--Hilbert problem
I: The Hopf algebra structure of graphs and the main theorem},
Comm. Math. Phys. \textbf{210}, no.~1 (2000), 249--273.

\bibitem{Das0}
A.~Das,
\emph{Twisted Rota--Baxter operators and Reynolds operators on Lie
algebras and NS-Lie algebras},
J.\ Math.\ Phys.\ \textbf{62} (2021), Paper~091701.

\bibitem{Das1}
A.~Das,
\emph{Cohomology and deformations of weighted Rota--Baxter operators},
J.\ Math.\ Phys.\ \textbf{63} (2022), 091703.

\bibitem{Das21}
A.~Das,
\textit{Twisted Rota--Baxter operators and Reynolds operators on Lie algebras
and NS-Lie algebras},
J. Math. Phys. \textbf{62} (2021), no.~9, 091701.

\bibitem{das one}
A. Das, S. Majhi and R. Mandal,
\textit{Deformation maps in proto-twilled Leibniz algebras,}
arXiv:2409.18599.

\bibitem{das_two} 
A. Das, 
\emph{Applications of Poisson cohomology to the inducibility problems and study of deformation maps}, 
Journal of Geometry and Physics, (2025): 105704.

\bibitem{DK}
A.~De~Sole and V.~G.~Kac,
\emph{Finite vs.\ affine $W$-algebras},
Japan.\ J.\ Math.\ \textbf{1} (2006), 137--261.

\bibitem{DK3}
A.~De~Sole and V.~G.~Kac,
\emph{The variational Poisson cohomology},
Japan.\ J.\ Math.\ \textbf{8} (2013), 1--145.

\bibitem{DK113}A. De Sole and V.~G.~Kac, \textit{Lie conformal algebra cohomology and the variational complex,} Comm. Math.
Phys. 292 (2009), 667--719. 

\bibitem{DK13}
A.~De~Sole and V.~G.~Kac,
\emph{Essential variational Poisson cohomology},
Comm.\ Math.\ Phys.\ \textbf{313} (2012), 837--864.

\bibitem{DSV}
V.~Dotsenko, S.~Shadrin, and B.~Vallette,
\emph{Maurer--Cartan methods in deformation theory: the twisting procedure},
London Math.\ Soc.\ Lecture Note Ser.\ \textbf{488},
Cambridge University Press, Cambridge, 2023.

\bibitem{Fregier1} Y. Fr\'egier, A new cohomology theory associated to deformations of Lie algebra morphisms. \textit{Lett. Math. Phys}.
\textbf{70} (2004), 97-107. 2, 11, 14

\bibitem{Fregier2} Y. Fr\'egier and M. Zambon, Simultaneous deformations of algebras and morphisms via derived brackets. \textit{J. Pure
Appl. Algebra} \textbf{219} (2015), 5344-5362.

\bibitem{Get}
E.~Getzler,
\emph{Lie theory for nilpotent $L_\infty$-algebras},
Ann.\ Math.\ (2) \textbf{170} (2009), 271--301.

\bibitem{Guo12}
L.~Guo,
\textit{An Introduction to Rota--Baxter Algebra},
Surveys of Modern Mathematics, vol.~4,
International Press / Higher Education Press, 2012.
%\bibitem{SGuo}
%S. Guo, 
% \textit{Cohomology of Lie-conformal algebras with derivations}, 
%J. Math. Res. Appl. 44, No. 6, (2024) 754--768 .

\bibitem{HB}Y. Hong, C.  Bai,  \textit{Conformal classical Yang-Baxter equation, S-equation and O-operators,} Lett. Math. Phys., \textbf{110}, 5 (2019), 885--909.

\bibitem{def_map_3_Lie}
S. Hou, Y. Sheng and Y. Zhou,
\emph{A unified approach to controlling algebras and cohomologies of various operators on $3$-Lie algebras},
Int. J. Geom. Methods Mod. Phys. \textbf{22} (2025), 2540025.

\bibitem{JS2}
J.~Jiang and Y.~Sheng,
\emph{Deformations of modified $r$-matrices and cohomologies of related
algebraic structures},
J.\ Noncommut.\ Geom.\ \textbf{19}, no.~1 (2025), 1--37.

\bibitem{JST23}
J.~Jiang, Y.~Sheng, and R.~Tang,
\textit{Deformation maps in quasi-twilled Lie algebras and their cohomology},
Izvestia: Mathematics, \textbf{90} (3) (2026), 561--589.

\bibitem{JSZ}
J.~Jiang, Y.~Sheng, and C.~Zhu,
\emph{Lie theory and cohomology of relative Rota--Baxter operators},
J.\ Lond.\ Math.\ Soc.\ \textbf{109}, Issue~2 e12836 (2024).

\bibitem{KAC}
V.~G.~Kac,
\emph{Vertex Algebras for Beginners}, 2nd~ed.,
Univ.\ Lecture Ser.\ \textbf{10},
Amer.\ Math.\ Soc., Providence, RI, 1998.

\bibitem{KAC2}  V.~G.~Kac, \emph{Formal distribution algebras and conformal algebras}, Brisbane Congress in Math. Physics,
1997.

\bibitem{KosSch}
Y.~Kosmann-Schwarzbach,
\emph{Derived brackets},
Lett.\ Math.\ Phys.\ \textbf{69} (2004), 61--87.

\bibitem{Ku}
B.~A.~Kupershmidt,
\emph{What a classical $r$-matrix really is},
J.\ Nonlinear Math.\ Phys.\ \textbf{6} (1999), 448--488.

\bibitem{LM}
T.~Lada and M.~Markl,
\emph{Strongly homotopy Lie algebras},
Comm.\ Algebra \textbf{23} (1995), 2147--2161.

\bibitem{LS}
T.~Lada and J.~Stasheff,
\emph{Introduction to $sh$ Lie algebras for physicists},
Internat.\ J.\ Theoret.\ Phys.\ \textbf{32} (1993), 1087--1103.

\bibitem{LST}
A.~Lazarev, Y.~Sheng, and R.~Tang,
\emph{Deformations and homotopy theory of relative Rota--Baxter Lie
algebras},
Comm.\ Math.\ Phys.\ \textbf{383} (2021), 595--631.

\bibitem{Makhlouf}
S. Liu, A. Makhlouf and L. Song,
\emph{Deformation maps of quasi-twilled associative algebras},
J. Algebra, \textbf{689} (2026), 519--550

\bibitem{STS}
M.~A.~Semenov-Tian-Shansky,
\emph{What is a classical $r$-matrix?},
Funct.\ Anal.\ Appl.\ \textbf{17} (1983), 259--272.

\bibitem{STS2}
M.~A.~Semenov-Tian-Shansky,
\emph{Integrable systems and factorization problems},
Oper.\ Theory Adv.\ Appl.\ \textbf{141} (2003), 155--218.
%\bibitem{Sh21}
%Y.~Sheng, ??????
%\textit{Crossed homomorphisms of Lie algebras and their cohomological
%characterization},
%J. Algebra \textbf{574} (2021), 542--568.

\bibitem{Rey1895}
O.~Reynolds,
\textit{On the dynamical theory of incompressible viscous fluids and the
determination of the criterion},
Philos. Trans. R. Soc. London Ser.~A \textbf{186} (1895), 123--164.

\bibitem{Rot69}
G.-C.~Rota,
\textit{Baxter algebras and combinatorial identities. I, II},
Bull. Amer. Math. Soc. \textbf{75} (1969), 325--334.

\bibitem{TBGS}
R.~Tang, C.~Bai, L.~Guo, and Y.~Sheng,
\emph{Deformations and their controlling cohomologies of
$\mathcal{O}$-operators},
Comm.\ Math.\ Phys.\ \textbf{368} (2019), 665--700.

\bibitem{TBGS2}
R.~Tang, C.~Bai, L.~Guo, and Y.~Sheng,
\emph{Homotopy Rota--Baxter operators and post-Lie algebras},
J.\ Noncommut.\ Geom.\ \textbf{17} (2023), 1--35.

\bibitem{TFS}
R.~Tang, Y.~Fr\'egier, and Y.~Sheng,
\emph{Cohomologies of a Lie algebra with a derivation and applications},
J.\ Algebra \textbf{534} (2019), 65--99.

\bibitem{Uch1}
K.~Uchino,
\emph{Derived brackets and sh Leibniz algebras},
J.\ Pure Appl.\ Algebra \textbf{215} (2011), 1102--1111.

\bibitem{Uch08}
K.~Uchino,
\textit{Twisting on associative algebras and Rota--Baxter type operators},
J. Noncommut. Geom. \textbf{2}, no.~2 (2008), 149--170.

\bibitem{PSTZ}  Y. Pei, Y. Sheng, R. Tang and K. Zhao, Actions of monoidal categories and representations of Cartan type Lie
algebras. \textit{J. Inst. Math. Jussieu}, \textbf{22} (2023), 2367-2402

\bibitem{Vo}
T.~T.~Voronov,
\emph{Higher derived brackets and homotopy algebras},
J.\ Pure Appl.\ Algebra \textbf{202} (2005), 133--153.

\bibitem{YL}
L.~Yuan and J.~Liu,
\emph{Twisting theory, relative Rota--Baxter type operators and
$L_\infty$-algebras on Lie conformal algebras},
J.\ Algebra \textbf{636} (2023), 88--122.

\bibitem{Wu}  Z. Wu, \textit{Lie algebra structures on cohomology complexes of some $H$-pseudoalgebras,} J. Algebra \textbf{396} (2013),
117--142.

\bibitem{ZBC} J. Zhao, B. Sun, L. Chen, 
\textit{Representations and cohomology of Rota-Baxter Lie conformal algebras}, 
J. Geom. Phys. \textbf{216} (2025): 105542.

\end{thebibliography}
\end{document}